\documentclass[reqno,10pt,centertags]{amsart}
\usepackage{amsmath,amsthm,amscd,amssymb,latexsym,esint,upref,enumerate,
color,verbatim,yfonts}
\date{\today}
\usepackage{hyperref}

\newcommand*{\mailto}[1]{\href{mailto:#1}{\nolinkurl{#1}}}
\newcommand{\arxiv}[1]{\href{http://arxiv.org/abs/#1}{arXiv: #1}}


\makeatletter
\def\theequation{\@arabic\c@equation}


\newcommand{\oT}{H}

\newcommand{\bbN}{{\mathbb{N}}}
\newcommand{\bbR}{{\mathbb{R}}}

\newcommand{\bbC}{{\mathbb{C}}}

\newcommand{\cB}{{\mathcal B}}

\newcommand{\cD}{{\mathcal D}}

\newcommand{\cH}{{\mathcal H}}
\newcommand{\cI}{{\mathcal I}}
\newcommand{\cJ}{{\mathcal J}}
\newcommand{\cK}{{\mathcal K}}

\newcommand{\cN}{{\mathcal N}}

\newcommand{\cS}{{\mathcal S}}

\newcommand{\cX}{{\mathcal X}}

\newcommand{\no}{\nonumber}
\newcommand{\lb}{\label}
\newcommand{\f}{\frac}

\newcommand{\ol}{\overline}
\newcommand{\bs}{\backslash}

\newcommand{\wti}{\widetilde}

\newcommand{\la}{\lambda}
\newcommand{\al}{\alpha}

\newcommand{\Oh}{O}

\newcommand{\loc}{\text{\rm{loc}}}

\newcommand{\ran}{\operatorname{ran}}

\newcommand{\dom}{\operatorname{dom}}
\newcommand{\supp}{\operatorname{supp}}
\newcommand{\linspan}{\operatorname{lin.span}}

\renewcommand{\Re}{\operatorname{Re}}
\renewcommand{\Im}{\operatorname{Im}}

\newcommand{\bi}{\bibitem}
\newcommand{\hatt}{\widehat}

\newcommand{\essran}{\text{\rm ess.ran}}

\newcommand{\high}[1]{{\raisebox{1mm}{$#1$}}}

\DeclareMathOperator*{\slim}{s-lim}

\numberwithin{equation}{section}

\newtheorem{theorem}{Theorem}[section]

\newtheorem{lemma}[theorem]{Lemma}
\newtheorem{corollary}[theorem]{Corollary}
\newtheorem{hypothesis}[theorem]{Hypothesis}

\theoremstyle{definition}
\newtheorem{definition}[theorem]{Definition}
\newtheorem{remark}[theorem]{Remark}

\begin{document}

\title[Donoghue-Type $m$-Functions]{Donoghue-Type $m$-Functions for
Schr\"odinger Operators with Operator-Valued Potentials}

\author[F.\ Gesztesy]{Fritz Gesztesy}
\address{Department of Mathematics,
University of Missouri, Columbia, MO 65211, USA}
\email{\mailto{gesztesyf@missouri.edu}}
\urladdr{\url{http://faculty.missouri.edu/~gesztesyf}}

\author[S.\ N.\ Naboko]{Sergey N.\ Naboko}
\address{Department of Mathematical Physics, St.~Petersburg  State University, Ulianovskaia 1, NIIF, St.~Peterhof, St.~Petersburg,
Russian Federation, 198504}
\email{\mailto{sergey.naboko@gmail.com}}

\author[R.\ Weikard]{Rudi Weikard}
\address{Department of Mathematics, University of
Alabama at Birmingham, Birmingham, AL 35294, USA}
\email{\mailto{rudi@math.uab.edu}}
\urladdr{\url{http://www.math.uab.edu/~rudi/}}

\author[M.\ Zinchenko]{Maxim Zinchenko}
\address{Department of Mathematics and Statistics,
University of New Mexico, Albuquerque, NM 87131, USA}
\email{\mailto{maxim@math.unm.edu}}
\urladdr{\url{http://www.math.unm.edu/~maxim/}}

\date{\today}
\thanks{S.N. is supported by grants NCN 2013/09/BST1/04319, RFBR 12-01-00215-a, and Marie Curie grant PIIF-GA-2011-299919; Research of M.Z. is supported in part by a Simons Foundation grant CGM--281971.}
\subjclass[2010]{Primary: 34B20, 35P05. Secondary: 34B24, 47A10.}
\keywords{Weyl--Titchmarsh theory, spectral theory, operator-valued ODEs.}

\begin{abstract}
Given a complex, separable Hilbert space $\cH$, we consider differential expressions of the 
type $\tau =  - (d^2/dx^2) I_{\cH} + V(x)$, with $x \in (x_0,\infty)$ for some $x_0 \in \bbR$, 
or $x \in \bbR$ (assuming the limit-point property of $\tau$ at $\pm \infty$). Here $V$ denotes a bounded operator-valued potential $V(\cdot) \in \cB(\cH)$ such that $V(\cdot)$ is weakly measurable, the operator norm $\|V(\cdot)\|_{\cB(\cH)}$ is locally integrable, and $V(x) = V(x)^*$ a.e.\ on $x \in [x_0,\infty)$ or $x \in \bbR$. 
We focus on two major cases. First, on $m$-function theory for self-adjoint half-line $L^2$-realizations $H_{+,\alpha}$ in $L^2((x_0,\infty); dx; \cH)$ (with $x_0$ a regular endpoint for $\tau$, associated with the self-adjoint boundary condition $\sin(\alpha)u'(x_0) + \cos(\alpha)u(x_0)=0$,
indexed by the self-adjoint operator $\alpha = \alpha^* \in \cB(\cH)$), and second, on 
$m$-function theory for self-adjoint full-line $L^2$-realizations $H$ of $\tau$ in $L^2(\bbR; dx; \cH)$. 

In a nutshell, a Donoghue-type $m$-function $M_{A,\cN_i}^{Do}(\cdot)$ associated with self-adjoint extensions $A$ of a closed, symmetric operator $\dot A$ in $\cH$ with deficiency spaces
$\cN_z =  \ker \big(\dot A\high{^*} - z I_{\cH}\big)$ and corresponding orthogonal projections $P_{\cN_z}$ onto $\cN_z$ is given by
\begin{align*}
M_{A,\cN_i}^{Do}(z)&=P_{\cN_i} (zA + I_\cH)(A - z I_{\cH})^{-1}
P_{\cN_i}\big\vert_{\cN_i}      \\
&=zI_{\cN_i} + (z^2+1) P_{\cN_i} (A - z I_{\cH})^{-1}
P_{\cN_i}\big\vert_{\cN_i} \, , \quad  z\in \bbC\backslash \bbR.
\end{align*}
In the concrete case of half-line and full-line Schr\"odinger operators, the role of $\dot A$ is 
played by a suitably defined minimal Schr\"odinger operator $H_{+,\min}$ in 
$L^2((x_0,\infty); dx; \cH)$ and $H_{\min}$ in $L^2(\bbR; dx; \cH)$, both of which will be proven 
to be completely non-self-adjoint. The latter property is used to prove that if $H_{+,\alpha}$ 
in $L^2((x_0,\infty); dx; \cH)$, respectively, $H$ in $L^2(\bbR; dx; \cH)$, are self-adjoint 
extensions of $H_{+,\min}$, respectively,  $H_{\min}$, then the corresponding operator-valued measures in the Herglotz--Nevanlinna representations of the Donoghue-type 
$m$-functions $M_{H_{+,\alpha}, \cN_{+,i}}^{Do}(\cdot)$ and $M_{H, \cN_i}^{Do}(\cdot)$ encode the 
entire spectral information of $H_{+,\alpha}$, respectively, $H$. 
\end{abstract}

\maketitle

\newpage 

{\scriptsize \tableofcontents}

\section{Introduction} \lb{s1}

The principal topic of this paper centers around basic spectral theory for self-adjoint 
Schr\"odinger operators with bounded operator-valued potentials on a half-line as well as 
on the full real line, focusing on Donoghue-type $m$-function theory, eigenfunction expansions, 
and a version of the spectral theorem. More precisely, given a complex,
separable Hilbert space $\cH$, we consider differential expressions $\tau$ of the type
\begin{equation}
\tau =  - (d^2/dx^2) I_{\cH} + V(x),    \lb{1.1}
\end{equation}
with $x \in (x_0,\infty)$ or $x \in \bbR$ ($x_0 \in \bbR$ a reference point), and $V$ a bounded
operator-valued potential $V(\cdot) \in \cB(\cH)$ such that $V(\cdot)$ is weakly measurable, 
the operator norm $\|V(\cdot)\|_{\cB(\cH)}$ is locally integrable, and $V(x) = V(x)^*$ a.e.\ on 
$x \in [x_0,\infty)$ or $x \in \bbR$. The self-adjoint operators
in question are then half-line $L^2$-realizations of $\tau$ in $L^2((x_0,\infty); dx; \cH)$,
with $x_0$ assumed to be a regular endpoint for $\tau$, and hence with appropriate boundary
conditions at $x_0$ (cf.\ \eqref{1.26}) on one hand, and full-line $L^2$-realizations of $\tau$ in
$L^2(\bbR; dx; \cH)$ on the other.

The case of Schr\"odinger operators with operator-valued potentials under various
continuity or smoothness hypotheses on $V(\cdot)$, and under various self-adjoint
boundary conditions on bounded and unbounded open intervals, received considerable attention
in the past. In the special case where $\dim(\cH)<\infty$, that is, in the case of Schr\"odinger
operators with matrix-valued potentials, the literature is so voluminous that we cannot
possibly describe individual references and hence we primarily refer to the monographs
\cite{AM63}, \cite{RK05}, and the references cited therein. We note that the
finite-dimensional case, $\dim(\cH) < \infty$, as discussed in \cite{BL00}, is of
considerable interest as it represents an important ingredient in some proofs of
Lieb--Thirring inequalities (cf.\ \cite{LW00}). For the particular case of Schr\"odinger-type 
operators corresponding to the differential expression
$\tau = - (d^2/dx^2) I_{\cH} + A + V(x)$ on a bounded interval $(a,b) \subset \bbR$ with either
$A=0$ or $A$ a self-adjoint operator satisfying $A\geq c I_{\cH}$ for some $c>0$, we refer to 
the list of references in \cite{GWZ13b}. For earlier results on various aspects of boundary value problems, spectral theory, and scattering theory in the half-line case $(a,b) =(0,\infty)$, we refer, 
for instance, to \cite{Al06a}, \cite{AM10}, \cite{De08}, \cite{Go68}--\cite{GG69}, 
\cite[Chs.~3,4]{GG91}, \cite{GM76},
\cite{HMM13}, \cite{KL67}, \cite{Mo07}, \cite{Mo10}, \cite{Ro60}, \cite{Sa71}, \cite{Tr00}
(the case of the real line is discussed in \cite{VG70}). Our treatment of spectral 
theory for half-line and full-line Schr\"odinger operators in $L^2((x_0,\infty); dx; \cH)$ and in 
$L^2(\bbR; dx; \cH)$, respectively, in \cite{GWZ13}, \cite{GWZ13b} represents the most general 
one to date. 

Next, we briefly turn to Donoghue-type $m$-functions which abstractly can be introduced 
as follows (cf.\ \cite{GKMT01}, \cite{GMT98}). 
Given a self-adjoint extension $A$ of a densely defined, closed, symmetric operator $\dot A$ 
in $\cK$ (a complex, separable Hilbert space) and the deficiency subspace $\cN_i$ of $\dot A$ 
in $\cK$, with 
\begin{equation}
 \cN_i = \ker \big(\dot A\high{^*} - i I_{\cK}\big), \quad   
\dim \, (\cN_i)=k\in \bbN \cup \{\infty\},     \lb{1.2}
\end{equation} 
the Donoghue-type $m$-operator $M_{A,\cN_i}^{Do} (z) \in\cB(\cN_i)$ associated with the pair
 $(A,\cN_i)$  is given by
\begin{align}
\begin{split}
M_{A,\cN_i}^{Do}(z)&=P_{\cN_i} (zA + I_\cK)(A - z I_{\cK})^{-1}
P_{\cN_i}\big\vert_{\cN_i}      \\
&=zI_{\cN_i} + (z^2+1) P_{\cN_i} (A - z I_{\cK})^{-1}
P_{\cN_i} \big\vert_{\cN_i}\,, \quad  z\in \bbC\backslash \bbR,     \lb{1.3}
\end{split}
\end{align}
with $I_{\cN_i}$ the identity operator in $\cN_i$, and $P_{\cN_i}$ the orthogonal projection in 
$\cK$ onto $\cN_i$. Then $M_{A,\cN_i}^{Do}(\cdot)$ is a $\cB(\cN_i)$-valued 
Nevanlinna--Herglotz function that admits the representation 
\begin{equation}
M_{A,\cN_i}^{Do}(z) = \int_\bbR
d\Omega_{A,\cN_i}^{Do}(\lambda) \bigg[\f{1}{\lambda-z} -
\f{\lambda}{\lambda^2 + 1}\bigg], \quad z\in\bbC\backslash\bbR,    \lb{1.4}
\end{equation}
where the $\cB(\cN_i)$-valued measure $\Omega_{A,\cN_i}^{Do}(\cdot)$ satisfies 
\eqref{5.6}--\eqref{5.8}. 

In the concrete case of regular half-line Schr\"odinger operators in $L^2((x_0,\infty); dx)$ with a scalar potential, Donoghue \cite{Do65} introduced the analog of \eqref{1.3} and used it to settle certain inverse spectral problems. 

As has been shown in detail in \cite{GKMT01}, \cite{GMT98}, \cite{GT00}, Donoghue-type 
$m$-functions naturally lead to Krein-type resolvent formulas as well as linear fractional transformations relating two different self-adjoint extensions of $\dot A$. However, in this 
paper we are particularly interested in the question under which conditions on $\dot A$, 
the spectral information on its self-adjoint extension $A$, contained in its family of spectral projections $\{E_A(\lambda)\}_{\lambda \in \bbR}$, is already encoded in the 
$\cB(\cN_i)$-valued measure $\Omega_{A,\cN_i}^{Do}(\cdot)$. As shown in 
Corollary \ref{c5.8}, this is the case if and only if $\dot A$ is completely non-self-adjoint 
in $\cK$ and we will apply this to half-line and full-line Schr\"odinger operators with 
$\cB(\cH)$-valued potentials.  

In the general case of $\cB(\cH)$-valued potentials on the right half-line 
$(x_0,\infty)$, assuming Hypothesis \ref{h6.1}\,$(i)$, we introduce minimal and maximal, 
operators $H_{+,\min}$ and $H_{+,\max}$ 
in $L^2((x_0,\infty); dx; \cH)$ associated to $\tau$, and self-adjoint extensions 
$H_{+,\alpha}$ of $H_{+,\min}$ (cf.\ \eqref{3.2}, \eqref{3.4}, \eqref{3.9}) and given the generating 
property of the deficiency spaces $\cN_{+,z} = \ker(H_{+,\min} - z I)$, 
$z\in \bbC \backslash \bbR$, proven in Theorem \ref{t6.2}, conclude that $H_{+,\min}$ is 
completely non-self-adjoint (i.e., it has no nontrivial invariant subspace in 
$L^2((x_0,\infty); dx; \cH)$ on which it is self-adjoint). 

According to \eqref{1.3}, the right half-line Donoghue-type $m$-function corresponding to 
$H_{+,\alpha}$ and $\cN_{+,i}$ is given by 
\begin{align}
\begin{split}
M_{H_{+,\alpha}, \cN_{+,i}}^{Do} (z,x_0) &= P_{\cN_{+,i}} (z H_{+,\alpha} + I)
(H_{+,\alpha} - z I)^{-1} P_{\cN_{+,i}} \big|_{\cN_{+,i}}     \lb{1.5} \\
&= \int_\bbR d\Omega_{H_{+,\alpha},\cN_{+,i}}^{Do}(\lambda,x_0) \bigg[\f{1}{\lambda-z} -
\f{\lambda}{\lambda^2 + 1}\bigg], \quad z\in\bbC\backslash\bbR,
\end{split}
\end{align}
where $\Omega_{H_{+,\alpha},\cN_{+,i}}^{Do}(\, \cdot\, , x_0)$ satisfies the analogs of 
\eqref{5.6}--\eqref{5.8}.

Combining Corollary \ref{c5.8} with the complete non-self-adjointness of $H_{+,\min}$ proves that the entire spectral information for $H_{+,\alpha}$, contained in the corresponding family of spectral projections $\{E_{H_{+,\alpha}}(\lambda)\}_{\lambda \in \bbR}$ in $L^2((x_0,\infty); dx; \cH)$, is
already encoded in the $\cB(\cN_{+,i})$-valued measure 
$\Omega_{H_{+,\alpha},\cN_{+,i}}^{Do}(\, \cdot \,,x_0)$ (including multiplicity properties of the spectrum of $H_{+,\alpha}$). 

An explicit computation of $M_{H_{+,\alpha}, \cN_{+,i}}^{Do} (z,x_0)$ then yields 
\begin{align}
& M_{H_{+,\alpha}, \cN_{\pm,i}}^{Do} (z,x_0)
= \pm \sum_{j,k \in \cJ} \big(e_j, m_{+,\alpha}^{Do}(z,x_0) e_k\big)_{\cH}    \no \\
& \quad \times (\psi_{+,\alpha}(i, \, \cdot \, ,x_0)
[\Im(m_{+,\alpha}(i,x_0))]^{-1/2}
e_k, \, \cdot \, )_{L^2((x_0,\infty); dx; \cH))}     \no \\
& \quad \times \psi_{+,\alpha}(i, \, \cdot \, ,x_0)
[\Im(m_{+,\alpha}(i,x_0))]^{-1/2} e_j \big|_{\cN_{+,i}},
\quad z \in \bbC \backslash \bbR,     \lb{1.6}
\end{align}
where $\{e_j\}_{j \in \cJ}$ is an orthonormal basis in $\cH$ ($\cJ \subseteq \bbN$ an
appropriate index set) and the $\cB(\cH)$-valued Nevanlinna--Herglotz functions
$m_{+,\alpha}^{Do}(\, \cdot \, , x_0)$ are given by
\begin{align}
m_{+,\alpha}^{Do}(z,x_0) &= [\Im(m_{+,\alpha}(i,x_0))]^{-1/2}
[m_{+,\alpha}(z,x_0) - \Re(m_{+,\alpha}(i,x_0))]      \no \\
& \quad \times [\Im(m_{+,\alpha}(i,x_0))]^{-1/2}     \lb{1.7} \\
&= d_{+, \alpha} + \int_\bbR d\omega_{+,\alpha}^{Do}(\lambda,x_0)
\bigg[\f{1}{\lambda-z} -
\f{\lambda}{\lambda^2 + 1}\bigg], \quad z\in\bbC\backslash\bbR.     \lb{1.8}
\end{align}
Here $d_{+,\alpha} = \Re(m_{+,\alpha}^{Do}(i,x_0)) \in \cB(\cH)$, and
\begin{equation}
\omega_{+,\alpha}^{Do}(\, \cdot \,,x_0) = [\Im(m_{+,\alpha}(i,x_0))]^{-1/2}
\rho_{+,\alpha}(\,\cdot\,,x_0) [\Im(m_{+,\alpha}(i,x_0))]^{-1/2}   \lb{1.9} 
\end{equation}
satisfies the analogs of \eqref{A.42a}, \eqref{A.42b}. In addition, $\psi_{+,\alpha}(\, \cdot \,,x,x_0)$ 
is the right half-line Weyl--Titchmarsh solution \eqref{3.58A}, and $m_{+,\alpha}(\, \cdot \,,x_0)$ 
represents the standard $\cB(\cH)$-valued right half-line Weyl--Titchmarsh $m$-function in \eqref{3.58A} with $\cB(\cH)$-valued measure $\rho_{+,\alpha}(\, \cdot \,,x_0)$ in its Nevanlinna--Herglotz 
representation \eqref{2.25}--\eqref{2.26}. 

This result shows that the entire spectral information for $H_{+,\alpha}$ is also contained in
the $\cB(\cH)$-valued measure $\omega_{+,\alpha}^{Do}(\, \cdot \,,x_0)$ (again, including multiplicity properties of the spectrum of $H_{+,\alpha}$). Naturally, the same facts apply to the 
left half-line $(- \infty,x_0)$. 

Turning to the full-line case assuming Hypotheis \ref{h2.8}, and denoting by $H$ the self-adjoint 
realization of $\tau$ in $L^2(\bbR; dx; \cH)$, we now decompose 
\begin{equation}
L^2(\bbR; dx; \cH) = L^2((-\infty,x_0); dx; \cH) \oplus L^2((x_0, \infty); dx; \cH),  \lb{1.10}
\end{equation}
and introduce the orthogonal projections $P_{\pm,x_0}$ of $L^2(\bbR; dx; \cH)$ onto
the left/right subspaces $L^2((x_0,\pm\infty); dx; \cH)$. Thus, we introduce the $2 \times 2$ block operator 
representation,
\begin{equation}
(H - z I)^{-1} = \begin{pmatrix} P_{-,x_0} (H - zI)^{-1} P_{-,x_0}
& P_{-,x_0} (H - z I)^{-1} P_{+,x_0} \\
P_{+,x_0} (H - z I)^{-1} P_{-,x_0} & P_{+,x_0} (H - z I)^{-1} P_{+,x_0}
\end{pmatrix},     \lb{1.11}
\end{equation}
and introduce with respect to the decomposition \eqref{1.10}, the minimal operator $H_{\min}$ 
in $L^2(\bbR; dx; \cH)$ via
\begin{align}
H_{\min} &:= H_{-,\min} \oplus H_{+,\min}, \quad
H_{\min}^* = H_{-,\min}^* \oplus H_{+,\min}^*,     \lb{1.12} \\
\cN_z &\, = \ker\big(H_{\min}^* - z I \big) =  \ker\big(H_{-,\min}^* - z I \big) \oplus
 \ker\big(H_{+,\min}^* - z I \big)     \no \\
&\, = \cN_{-,z} \oplus \cN_{+,z}, \quad z \in \bbC \backslash \bbR,    \lb{1.13} 
\end{align}
(see the additional comments concerning our choice of minimal operator in Section \ref{s6}, 
following \eqref{6.23a}).

According to \eqref{1.3}, the full-line Donoghue-type $m$-function is given by 
\begin{align}
\begin{split}
M_{H, \cN_i}^{Do} (z) &= P_{\cN_i} (z H + I)
(H - z I)^{-1} P_{\cN_i} \big|_{\cN_i},     \lb{1.14} \\
&= \int_\bbR d\Omega_{H,\cN_i}^{Do}(\lambda) \bigg[\f{1}{\lambda-z} -
\f{\lambda}{\lambda^2 + 1}\bigg], \quad z\in\bbC\backslash\bbR,
\end{split}
\end{align}
where $\Omega_{H,\cN_i}^{Do}(\cdot)$ satisfies the analogs of \eqref{5.6}--\eqref{5.8}
(resp., \eqref{A.42A}--\eqref{A.42b}). 

Combining Corollary \ref{c5.8} with the complete non-self-adjointness of $H_{\min}$ proves that the entire spectral information for $H$, contained in the corresponding family of spectral projections 
$\{E_{H}(\lambda)\}_{\lambda \in \bbR}$ in $L^2(\bbR; dx; \cH)$, is
already encoded in the $\cB(\cN_i)$-valued measure $\Omega_{H,\cN_i}^{Do}(\cdot)$ 
(including multiplicity properties of the spectrum of $H$). 

With respect to the decomposition \eqref{1.10}, one
can represent $M_{H, \cN_i}^{Do} (\cdot)$ as the $2 \times 2$ block operator, 
\begin{align}
& M_{H, \cN_i}^{Do} (\cdot) = \big(M_{H, \cN_i,\ell,\ell'}^{Do} (\cdot)\big)_{0 \leq \ell, \ell' \leq 1}
\no \\
& \quad = z \left(\begin{smallmatrix} P_{\cN_-,i} & 0 \\
0 & P_{\cN_+,i} \end{smallmatrix}\right)    \lb{1.15} \\
& \qquad + (z^2 + 1) \left(\begin{smallmatrix} 
P_{\cN_-,i} P_{-,x_0} (H - zI)^{-1} P_{-,x_0} P_{\cN_-,i} & \;\;\; 
P_{\cN_-,i} P_{-,x_0} (H - zI)^{-1} P_{+,x_0} P_{\cN_+,i}   \\ 
P_{\cN_+,i} P_{+,x_0} (H - zI)^{-1} P_{-,x_0} P_{\cN_-,i} & \;\;\; 
P_{\cN_+,i} P_{+,x_0} (H - zI)^{-1} P_{+,x_0} P_{\cN_+,i}  
\end{smallmatrix}\right),     \no
\end{align}
and utilizing the fact that 
\begin{align}
\begin{split}
&\big\{\hatt \Psi_{-,\alpha,j}(z,\, \cdot \, , x_0) =
P_{-,x_0} \psi_{-,\alpha}(z, \, \cdot \, ,x_0)[- (\Im(z)^{-1} m_{-,\alpha}(z,x_0)]^{-1/2} e_j,  \\
& \;\;\, \hatt \Psi_{+,\alpha,j}(z,\, \cdot \, , x_0) =
P_{+,x_0} \psi_{+,\alpha}(z, \, \cdot \, ,x_0)[(\Im(z)^{-1} m_{+,\alpha}(z,x_0)]^{-1/2}
e_j\big\}_{j \in \cJ}    \lb{1.16}
\end{split}
\end{align}
is an orthonormal basis for $\cN_{z} = \ker\big(H_{\min}^* - z I \big)$,
$z \in \bbC \backslash \bbR$, with $\{e_j\}_{j \in \cJ}$ an orthonormal basis for $\cH$, one 
eventually computes explicitly, 
\begin{align}
& M_{H, \cN_i,0,0}^{Do} (z) = \sum_{j,k \in \cJ}
(e_j, M_{\alpha,0,0}^{Do}(z,x_0) e_k)_{\cH}  \no \\
& \hspace*{3.1cm} \times
\big(\hatt \Psi_{-,\alpha,k}(i,\, \cdot \,,x_0), \, \cdot \, \big)_{L^2(\bbR; dx; \cH)}
\hatt \Psi_{-,\alpha,j}(i,\, \cdot \,,x_0),     \lb{1.17} \\
& M_{H, \cN_i,0,1}^{Do} (z) = \sum_{j,k \in \cJ}
(e_j, M_{\alpha,0,1}^{Do}(z,x_0) e_k)_{\cH}   \no \\
& \hspace*{3.1cm} \times
\big(\hatt \Psi_{+,\alpha,k}(i,\, \cdot \,,x_0), \, \cdot \, \big)_{L^2(\bbR; dx; \cH)}
\hatt \Psi_{-,\alpha,j}(i,\, \cdot \,,x_0),     \lb{1.18} \\
& M_{H, \cN_i,1,0}^{Do} (z) = \sum_{j,k \in \cJ}
(e_j, M_{\alpha,1,0}^{Do}(z,x_0) e_k)_{\cH}    \no \\
& \hspace*{3.1cm} \times
\big(\hatt \Psi_{-,\alpha,k}(i,\, \cdot \,,x_0), \, \cdot \, \big)_{L^2(\bbR; dx; \cH)}
\hatt \Psi_{+,\alpha,j}(i,\, \cdot \,,x_0),     \lb{1.19} \\
& M_{H, \cN_i,1,1}^{Do} (z) = \sum_{j,k \in \cJ}
(e_j, M_{\alpha,1,1}^{Do}(z,x_0) e_k)_{\cH}    \no \\
& \hspace*{3.1cm} \times
\big(\hatt \Psi_{+,\alpha,k}(i,\, \cdot \,,x_0), \, \cdot \, \big)_{L^2(\bbR; dx; \cH)}
\hatt \Psi_{+,\alpha,j}(i,\, \cdot \,,x_0),    \lb{1.20} \\
& \hspace*{8.95cm} z\in\bbC\backslash\bbR,     \no
\end{align}
with $M_{\alpha}^{Do}(\, \cdot \,,x_0)$ given by 
\begin{align}
\begin{split} 
M_{\alpha}^{Do} (z,x_0) &= T_{\alpha}^* M_{\alpha}(z,x_0) T_{\alpha}
+ E_{\alpha}     \lb{1.21} \\
&= D_{\alpha} + \int_\bbR d\Omega_{\alpha}^{Do}(\lambda,x_0) \bigg[\f{1}{\lambda-z} -
\f{\lambda}{\lambda^2 + 1}\bigg], \quad z\in\bbC\backslash\bbR,    
\end{split} 
\end{align}
Here $D_{\alpha} = \Re(M_{\alpha}^{Do}(i,x_0)) \in \cB\big(\cH^2\big)$, and
\begin{equation}
\Omega_{\alpha}^{Do}(\, \cdot \,,x_0) = T_{\alpha}^*
\Omega_{\alpha}(\,\cdot\,,x_0) T_{\alpha}   \lb{1.23} 
\end{equation}
satisfies the analogs of \eqref{A.42a}, \eqref{A.42b}. In addition, the $2 \times 2$ block operators $T_{\alpha} \in \cB\big(\cH^2\big)$ \big(with $T_{\alpha}^{-1} \in \cB\big(\cH^2\big)$\big) and
$E_{\alpha} \in \cB\big(\cH^2\big)$ are defined in \eqref{6.46} and \eqref{6.47}, and 
$M_{\alpha}(\, \cdot \,,x_0)$ is the standard $\cB\big(\cH^2\big)$-valued Weyl--Titchmarsh 
$2 \times 2$ block operator Weyl--Titchmarsh function \eqref{2.71}--\eqref{2.71a} with 
$\Omega_{\alpha}(\, \cdot \, x_0)$ the $\cB\big(\cH^2\big)$-valued measure in its 
Nevanlinna--Herglotz representation \eqref{2.71b}--\eqref{2.71d}. 

This result shows that the entire spectral information for $H$ is also contained in
the $\cB\big(\cH^2\big)$-valued measure $\Omega_{\alpha}^{Do}(\, \cdot \,,x_0)$ (again, 
including multiplicity properties of the spectrum of $H$). 

\begin{remark} \lb{r1.1}
As the first equality in \eqref{1.21} shows, $M_{\alpha}^{Do} (z,x_0)$ recovers the traditional 
Weyl--Titchmarsh operator $M_{\alpha}(z,x_0)$ apart from the boundedly invertible 
$2 \times 2$ block operators $T_{\alpha}$. The latter is built from the half-line Weyl--Titchmarsh 
operators $m_{\pm,\alpha}(z,x_0)$ in a familiar, yet somewhat intriguing, manner 
(cf.\ \eqref{2.71}--\eqref{2.71a}), 
\begin{align}
\begin{split} 
& M_{\alpha}(z,x_0)      \\ 
& \quad = \left(\begin{smallmatrix} 
W(z)^{-1}  & \;\;\; 2^{-1} W(z)^{-1} [m_{-,\alpha}(z,x_0) + m_{-,\alpha}(z,x_0)]   \\ 
2^{-1} [m_{-,\alpha}(z,x_0) + m_{-,\alpha}(z,x_0)] W(z)^{-1} & \;\;\;
m_{\pm,\alpha}(z,x_0) W(z)^{-1} m_{\mp,\alpha}(z,x_0) \end{smallmatrix}\right),   
\lb{1.24} \\ 
& \hspace*{9.35cm} z \in \bbC \backslash \sigma(H),    
\end{split} 
\end{align} 
abbreviating $W(z) = [m_{-,\alpha}(z,x_0) - m_{+,\alpha}(z,x_0)]$, 
$z \in \bbC \backslash \sigma(H)$. 
In contrast to this construction, combining the Donoghue $m$-function $M_{H, \cN_i}^{Do}(\cdot)$ 
with the left/right half-line decomposition \eqref{1.10}, via equation \eqref{1.15}, directly leads to 
\eqref{1.17}--\eqref{1.20}, and hence to \eqref{1.21}, and thus to the $\cB\big(\cH^2\big)$-valued 
measure $\Omega_{\alpha}^{Do}(\, \cdot \,,x_0)$ in the Nevanlinna--Herglotz representation of  
$M_{\alpha}^{Do} (\, \cdot \,,x_0)$, encoding the entire spectral information of $H$ 
contained in it's family of spectral projections $E_H(\cdot)$. 

Of course, $\Omega_{\alpha}^{Do}(\, \cdot \,,x_0)$ is directly related to the 
$\cB\big(\cH^2\big)$-valued Weyl--Titchmarsh measure measure 
$\Omega_{\alpha}(\, \cdot \,,x_0)$ in the Nevanlinna--Herglotz representation of  
$M_{\alpha}(\, \cdot \,,x_0)$ via relation \eqref{1.23}, but our point is that the simple left/right 
half-line decomposition \eqref{1.10} combined with the Donoghue-type $m$ function \eqref{1.14} 
naturally leads to $\Omega_{\alpha}^{Do}(\, \cdot \,,x_0)$, without employing \eqref{1.24}. This 
offers interesting possibilities in the PDE context where $\bbR^n$, $n \in \bbN$, $n \geq 2$, can 
now be decomposed in various manners, for instance, into the interior and exterior of a given 
(bounded or unbounded) domain $D \subset \bbR^n$, a left/right (upper/lower) half-space, etc. 
In this context we should add that this paper concludes the first part of our program, the 
treatment of half-line and full-line Schr\"odinger operators with bounded operator-valued potentials. Part two will aim at certain classes of unbounded operator-valued potentials $V$, applicable to
multi-dimensional Schr\"odinger operators in $L^2(\bbR^n; d^n x)$, $n \in \bbN$,
$n \geq 2$, generated by differential expressions of the type $- \Delta + V(\cdot)$. In
fact, it was precisely the connection between multi-dimensional Schr\"odinger
operators and one-dimensional Schr\"odinger operators with unbounded operator-valued potentials which originally motivated our interest in this program. We will return to this circle of ideas  elsewhere.  \hfill $\diamond$
\end{remark}

At this point we turn to the content of each section: Section \ref{s2} recalls our basic results in
\cite{GWZ13} on the initial value problem associated with Schr\"odinger operators with bounded
operator-valued potentials. We use this section to introduce some of the basic notation employed
subsequently and note that our conditions on $V(\cdot)$ (cf.\ Hypothesis \ref{h2.7}) are the most
general to date with respect to the local behavior of the potential $V(\cdot)$. Following our detailed treatment in \cite{GWZ13}, Section \ref{s3} introduces maximal and minimal operators associated with the differential expression $\tau = - (d^2/dx^2) I_{\cH} + V(\cdot)$ on the interval 
$(a,b) \subset \bbR$
(eventually aiming at the case of a half-line $(a,\infty)$), and assuming that the left endpoint
$a$ is regular for $\tau$ and that $\tau$ is in the limit-point case at the endpoint $b$ we discuss
the family of self-adjoint extensions $H_{\alpha}$ in $L^2((a,b); dx; \cH)$ corresponding to boundary
conditions of the type
\begin{equation}
\sin(\alpha)u'(a) + \cos(\alpha)u(a)=0,     \lb{1.26}
\end{equation}
indexed by the self-adjoint operator $\alpha = \alpha^* \in \cB(\cH)$.
In addition, we recall elements of Weyl--Titchmarsh theory, the introduction of the
operator-valued Weyl--Titchmarsh function $m_{\alpha}(\cdot) \in \cB(\cH)$ and the Green's
function $G_{\alpha}(z,\, \cdot \,, \,\cdot \,) \in \cB(\cH)$ of $H_{\alpha}$. In particular, we prove bounded invertibility of $\Im(m_{\alpha}(\cdot))$ in $\cB(\cH)$ in Theorem \ref{t3.3}. In Section \ref{s4} we recall the analogous results
for full-line Schr\"odinger operators $H$ in $L^2(\bbR; dx; \cH)$, employing a $2 \times 2$ block
operator representation of the associated Weyl--Titchmarsh $M_{\alpha} (\, \cdot \,,x_0)$-matrix 
and its $\cB\big(\cH^2\big)$-valued spectral measure $d\Omega_{\alpha}(\, \cdot \,,x_0)$, 
decomposing $\bbR$ into a left and right half-line with respect to the reference point 
$x_0 \in \bbR$, $(-\infty, x_0] \cup [x_0, \infty)$. Various 
basic facts on deficiency subspaces, abstract Donoghue-type $m$-functions and the bounded 
invertibility of their imaginary parts, and the notion of completely non-self-adjoint symmetric operators are provided in Section \ref{s5}. This section also discusses the possibility 
of a reduction of the spectral family $E_A(\cdot)$ of the self-adjoint operator $A$ 
in $\cH$ to the measure $\Sigma_A(\cdot) = P_{\cN} E_A(\cdot)P_{\cN}\big|_{\cN}$ in $\cN$ 
(with $P_{\cN}$ the orthogonal projection onto a closed linear subspace $\cN$ of $\cH$) to the 
effect that $A$ is unitarily equivalent to the operator of multiplication by the independent
variable $\la$ in the space $L^2(\bbR; d\Sigma_A(\la);\cN)$, yielding a diagonalization 
of $A$ (see Theorem \ref{t5.6}). Our final and principal Section \ref{s6}, establishes complete 
non-self-adjointness of the minimal operators $H_{\pm,\min}$ in $L^2((x_0, \pm \infty); dx;\cH)$ 
(cf.\ Theorem \ref{t6.2}), and analyzes in detail the half-line Donoghue-type $m$-functions 
$M_{H_{\pm,\alpha}, \cN_{\pm,i}}^{Do} (\, \cdot \,,x_0)$ in $\cN_{\pm,i}$. In addition, it 
introduces the derived quantities $m_{\pm,\alpha}^{Do}(\, \cdot \,,x_0)$ in $\cH$ and 
subsequently, turns to the full-line Donoghue-type operators $M_{H, \cN_i}^{Do} (\cdot)$ in 
$\cN_i$ and $M_{\alpha}^{Do} (\,\cdot\,,x_0)$ in $\cH^2$. It is then proved that the entire spectral information for $H_{\pm}$ and $H$ (including multiplicity issues) are encoded in 
$M_{H_{\pm,\alpha}, \cN_{\pm,i}}^{Do} (\,\cdot \,,x_0)$ (equivalently, in 
$m_{\pm,\alpha}^{Do}(\,\cdot\,,x_0)$) and in $M_{H, \cN_i}^{Do} (\cdot)$ (equivalently, 
in $M_{\alpha}^{Do} (\, \cdot \,,x_0)$), respectively. Appendix \ref{sA} collects basic
facts on operator-valued Nevanlinna--Herglotz functions. We introduced the background material in Sections \ref{s2}--\ref{s4} to make this paper reasonably self-contained.

Finally, we briefly comment on the notation used in this paper: Throughout, $\cH$
denotes a separable, complex Hilbert space with inner product and norm
denoted by $(\, \cdot \,, \, \cdot \,)_{\cH}$ (linear in the second argument) and
$\|\cdot \|_{\cH}$, respectively. The identity operator in $\cH$ is written as
$I_{\cH}$. We denote by
$\cB(\cH)$ (resp., $\cB_{\infty}(\cH)$) the Banach space of linear bounded (resp., compact)
operators in $\cH$. The domain, range, kernel (null space), resolvent set, and spectrum
of a linear operator will be denoted by $\dom(\cdot)$,
$\ran(\cdot)$, $\ker(\cdot)$, $\rho(\cdot)$, and $\sigma(\cdot)$, respectively. The closure
of a closable operator $S$ in $\cH$ is denoted by $\ol S$. 
By $\mathfrak{B}(\bbR)$ we denote the collection of Borel subsets of $\bbR$.

\section{Basics on the Initial Value For Schr\"odinger Operators With Operator-Valued Potentials} \label{s2}

In this section we recall the basic results on initial value problems for second-order differential equations of the form $-y''+Qy=f$ on an arbitrary open interval
$(a,b) \subseteq \bbR$ with a bounded operator-valued coefficient $Q$, that is, when $Q(x)$ is a bounded operator on a separable, complex Hilbert space $\cH$ for a.e.\ $x\in(a,b)$. We are concerned with two types of situations: in the first one $f(x)$ is an element of the Hilbert space $\cH$ for a.e.\ $x\in (a,b)$, and the solution sought is to take values in $\cH$. In the second situation, $f(x)$ is a bounded operator on $\cH$ for a.e.\ $x\in(a,b)$, as is the proposed solution $y$.

All results recalled in this section were proved in detail in \cite{GWZ13}.

We start with some necessary preliminaries:
Let $(a,b) \subseteq \bbR$ be a finite or infinite interval and $\cX$ a Banach space.
Unless explicitly stated otherwise (such as in the context of operator-valued measures in
Nevanlinna--Herglotz representations, cf.\ Appendix \ref{sA}), integration of $\cX$-valued functions on $(a,b)$ will
always be understood in the sense of Bochner (cf., e.g., \cite[p.\ 6--21]{ABHN01},
\cite[p.\ 44--50]{DU77}, \cite[p.\ 71--86]{HP85}, \cite[Ch.\ III]{Mi78}, \cite[Sect.\ V.5]{Yo80} for
details). In particular, if $p\ge 1$, the symbol $L^p((a,b);dx;\cX)$ denotes the set of equivalence classes of strongly measurable $\cX$-valued functions which differ at most on sets of Lebesgue measure zero, such that $\|f(\cdot)\|_{\cX}^p \in L^1((a,b);dx)$. The
corresponding norm in $L^p((a,b);dx;\cX)$ is given by
$\|f\|_{L^p((a,b);dx;\cX)} = \big(\int_{(a,b)} dx\, \|f(x)\|_{\cX}^p \big)^{1/p}$,
rendering $L^p((a,b);dx;\cX)$ a Banach space. If $\cH$ is a separable Hilbert space, then so is $L^2((a,b);dx;\cH)$ (see, e.g.,
\cite[Subsects.\ 4.3.1, 4.3.2]{BW83}, \cite[Sect.\ 7.1]{BS87}). One recalls
that by a result of Pettis \cite{Pe38}, if $\cX$ is separable, weak
measurability of $\cX$-valued functions implies their strong measurability.

Sobolev spaces $W^{n,p}((a,b); dx; \cX)$ for $n\in\bbN$ and $p\geq 1$ are defined as follows: $W^{1,p}((a,b);dx;\cX)$ is the set of all
$f\in L^p((a,b);dx;\cX)$ such that there exists a $g\in L^p((a,b);dx;\cX)$ and an
$x_0\in(a,b)$ such that
\begin{equation}
f(x)=f(x_0)+\int_{x_0}^x dx' \, g(x') \, \text{ for a.e.\ $x \in (a,b)$.}
\end{equation}
In this case $g$ is the strong derivative of $f$, $g=f'$. Similarly,
$W^{n,p}((a,b);dx;\cX)$ is the set of all $f\in L^p((a,b);dx;\cX)$ so that the first $n$ strong
derivatives of $f$ are in $L^p((a,b);dx;\cX)$. For simplicity of notation one also introduces
$W^{0,p}((a,b);dx;\cX)=L^p((a,b);dx;\cX)$. Finally, $W^{n,p}_{\rm loc}((a,b);dx;\cX)$ is
the set of $\cX$-valued functions defined on $(a,b)$ for which the restrictions to any
compact interval $[\alpha,\beta]\subset(a,b)$ are in $W^{n,p}((\alpha,\beta);dx;\cX)$.
In particular, this applies to the case $n=0$ and thus defines $L^p_{\rm loc}((a,b);dx;\cX)$.
If $a$ is finite we may allow $[\alpha,\beta]$ to be a subset of $[a,b)$ and denote the
resulting space by $W^{n,p}_{\rm loc}([a,b);dx;\cX)$ (and again this applies to the case
$n=0$).

Following a frequent practice (cf., e.g., the discussion in \cite[Sect.\ III.1.2]{Am95}), we
will call elements of $W^{1,1} ([c,d];dx;\cX)$, $[c,d] \subset (a,b)$ (resp.,
$W^{1,1}_{\rm loc}((a,b);dx;\cX)$), strongly absolutely continuous $\cX$-valued functions
on $[c,d]$ (resp., strongly locally absolutely continuous $\cX$-valued functions
on $(a,b)$), but caution the reader that unless $\cX$ possesses the Radon--Nikodym
(RN) property, this notion differs from the classical definition
of $\cX$-valued absolutely continuous functions (we refer the interested reader
to \cite[Sect.\ VII.6]{DU77} for an extensive list of conditions equivalent to $\cX$ having the
RN property). Here we just mention that reflexivity of $\cX$ implies the RN property.

In the special case where $\cX = \bbC$, we omit $\cX$ and just write
$L^p_{(\loc)}((a,b);dx)$, as usual.

We emphasize that a strongly continuous operator-valued function $F(x)$,
$x \in (a,b)$, always means continuity of $F(\cdot) h$ in $\cH$ for all $h \in\cH$ (i.e.,
pointwise continuity of $F(\cdot)$ in $\cH$). The same pointwise conventions will apply to 
the notions of strongly differentiable and strongly measurable operator-valued functions
 throughout this manuscript. In particular, and unless explicitly stated otherwise, for 
 operator-valued functions $Y$, the symbol $Y'$ will be understood in the strong sense; 
 similarly,  $y'$ will denote the strong derivative for vector-valued functions $y$.

\begin{definition} \lb{d2.2}
Let $(a,b)\subseteq\bbR$ be a finite or infinite interval and
$Q:(a,b)\to\cB(\cH)$ a weakly measurable operator-valued function with
$\|Q(\cdot)\|_{\cB(\cH)}\in L^1_\loc((a,b);dx)$, and suppose that
$f\in L^1_{\loc}((a,b);dx;\cH)$. Then the $\cH$-valued function
$y: (a,b)\to \cH$ is called a (strong) solution of
\begin{equation}
- y'' + Q y = f   \lb{2.15A}
\end{equation}
if $y \in W^{2,1}_\loc((a,b);dx;\cH)$ and \eqref{2.15A} holds a.e.\ on $(a,b)$.
\end{definition}

One verifies that $Q:(a,b)\to\cB(\cH)$ satisfies the conditions in
Definition \ref{d2.2} if and only if $Q^*$ does (a fact that will play a role later on, cf.\ the paragraph following \eqref{2.33A}).

\begin{theorem} \lb{t2.3}
Let $(a,b)\subseteq\bbR$ be a finite or infinite interval and
$V:(a,b)\to\cB(\cH)$ a weakly measurable operator-valued function with
$\|V(\cdot)\|_{\cB(\cH)}\in L^1_\loc((a,b);dx)$. Suppose that
$x_0\in(a,b)$, $z\in\bbC$, $h_0,h_1\in\cH$, and $f\in
L^1_{\loc}((a,b);dx;\cH)$. Then there is a unique $\cH$-valued
solution $y(z,\, \cdot \,,x_0)\in W^{2,1}_\loc((a,b);dx;\cH)$ of the initial value problem
\begin{equation}
\begin{cases}
- y'' + (V - z) y = f \, \text{ on } \, (a,b)\bs E,  \\
\, y(x_0) = h_0, \; y'(x_0) = h_1,
\end{cases}     \lb{2.1}
\end{equation}
where the exceptional set $E$ is of Lebesgue measure zero and depends only on the representatives chosen for $V$ and $f$ but is independent of $z$.

Moreover, the following properties hold:
\begin{enumerate}[$(i)$]
\item For fixed $x_0,x\in(a,b)$ and $z\in\bbC$, $y(z,x,x_0)$ depends jointly continuously on $h_0,h_1\in\cH$, and $f\in L^1_{\loc}((a,b);dx;\cH)$ in the sense that
\begin{align}
\begin{split}
& \big\|y\big(z,x,x_0;h_0,h_1,f\big) - y\big(z,x,x_0;\wti h_0,\wti h_1,\wti f\big)\big\|_{\cH}    \\
& \quad \leq C(z,V)
\big[\big\|h_0 - \wti h_0\big\|_{\cH} + \big\|h_1 - \wti h_1\big\|_{\cH}
+ \big\|f - \wti f\big\|_{L^1([x_0,x];dx;\cH)}\big],    \lb{2.1A}
\end{split}
\end{align}
where $C(z,V)>0$ is a constant, and the dependence of
$y$ on the initial data $h_0, h_1$ and the inhomogeneity $f$ is displayed
in \eqref{2.1A}.
\item For fixed $x_0\in(a,b)$ and $z\in\bbC$, $y(z,x,x_0)$ is strongly continuously differentiable with respect to $x$ on $(a,b)$.
\item For fixed $x_0\in(a,b)$ and $z\in\bbC$, $y'(z,x,x_0)$ is strongly differentiable with respect to $x$ on $(a,b)\bs E$.
\item For fixed $x_0,x \in (a,b)$, $y(z,x,x_0)$ and $y'(z,x,x_0)$
are entire with respect to $z$.
\end{enumerate}
\end{theorem}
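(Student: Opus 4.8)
\emph{The plan is to} recast the initial value problem \eqref{2.1} as an equivalent Volterra integral equation and to solve the latter by Picard iteration, exploiting the Volterra structure to obtain convergence on every compact subinterval despite the merely local integrability of $\norm{V(\cdot)}_{\cB(\cH)}$. First I would note that $y \in W^{2,1}_\loc((a,b);dx;\cH)$ solves \eqref{2.1} if and only if it solves
\[
y(x) = h_0 + (x-x_0)h_1 + \int_{x_0}^x (x-x')\big[(V(x')-z)y(x') - f(x')\big]\,dx', \quad x \in (a,b).
\]
The forward implication is two integrations (equivalently, an integration by parts after integrating $y'' = (V-z)y - f$ twice); the reverse requires that the right-hand side actually lie in $W^{2,1}_\loc$. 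For this one checks that, for any strongly continuous $\cH$-valued $y$, the map $x' \mapsto V(x')y(x')$ is strongly measurable---approximate $y$ uniformly on compacts by $\cH$-valued step functions and use weak measurability of each $V(\cdot)h$ together with Pettis' theorem---and that $\norm{V(\cdot)y(\cdot)}_{\cH} \le \norm{V(\cdot)}_{\cB(\cH)}\,\norm{y(\cdot)}_{\cH} \in L^1_\loc$, so the inner integral is strongly locally absolutely continuous.

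Next, I would fix a compact interval $[\alpha,\beta] \subset (a,b)$ with $x_0 \in [\alpha,\beta]$ and view the right-hand side of the integral equation as a map $T$ on $C([\alpha,\beta];\cH)$. Since $T$ is of Volterra type, the difference of iterates satisfies the standard factorial bound $\norm{T^n u - T^n w}_\infty \le K(z)^n (n!)^{-1} \norm{u - w}_\infty$, with $K(z) = (\beta-\alpha)\int_\alpha^\beta (\norm{V(x')}_{\cB(\cH)} + \abs{z})\,dx' < \infty$ (the factor $\abs{x-x'} \le \beta-\alpha$ folded into the kernel); hence some power $T^n$ is a contraction and $T$ has a unique fixed point $y(z,\cdot,x_0)$, equivalently the Picard iterates $y_0(x) = h_0 + (x-x_0)h_1$, $y_{m+1} = T y_m$, converge uniformly on $[\alpha,\beta]$. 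Exhausting $(a,b)$ by such intervals and using uniqueness to glue the local solutions yields the unique solution on all of $(a,b)$.

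For the regularity statements, the integral equation shows directly that $y$ is strongly continuously differentiable with $y'(x) = h_1 + \int_{x_0}^x[(V-z)y-f]\,dx'$, giving $(ii)$, and that $y'$ is strongly locally absolutely continuous, giving $y \in W^{2,1}_\loc$ and $(iii)$. \emph{The step I expect to be the main obstacle} is the construction of the exceptional null set $E$ independently of $z,h_0,h_1$. Since $y$ is continuous, the integrand $-zy$ is continuous and so contributes no bad points; I would therefore take $E$ to be the union of the non-Lebesgue points of $f$, of the scalar function $\norm{V(\cdot)}_{\cB(\cH)}$, and of the countably many $\cH$-valued functions $V(\cdot)h_m$, where $\{h_m\}_{m\in\bbN}$ is dense in $\cH$; this $E$ depends only on the chosen representatives of $V$ and $f$. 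At a point $x \notin E$ one writes $V(x')y(x') = V(x')[y(x')-y(x)] + V(x')y(x)$: continuity of $y$ together with the Lebesgue point of $\norm{V(\cdot)}_{\cB(\cH)}$ handles the first term, while a three-$\varepsilon$ approximation of $y(x)$ by the dense vectors $h_m$ reduces the second term to the Lebesgue points of $V(\cdot)h_m$, which are built into $E$. This produces $y''(x) = (V(x)-z)y(x) - f(x)$ for every $x \notin E$, with $E$ as required.

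Finally, property $(i)$ follows by applying the same Volterra/Gronwall estimate to the difference of the two solutions associated with data $(h_0,h_1,f)$ and $(\wti h_0, \wti h_1, \wti f)$, which produces the constant $C(z,V)$ in \eqref{2.1A}. For $(iv)$, each Picard iterate $y_m(z,x,x_0)$ is an $\cH$-valued polynomial in $z$, hence entire, and on every set $\abs{z} \le R$, $x \in [\alpha,\beta]$ the factorial bound (now with $K(R)$ in place of $K(z)$) gives uniform convergence of the iterates; the vector-valued Weierstrass/Morera theorem then shows the limit $y(\cdot,x,x_0)$ is entire, and the displayed formula for $y'$ yields the same for $y'(\cdot,x,x_0)$.
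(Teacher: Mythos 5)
The paper itself contains no proof of Theorem \ref{t2.3}: Section \ref{s2} merely recalls the result, with the detailed proof deferred to \cite{GWZ13}, and your argument follows essentially the same standard route used there (reduction to a Volterra integral equation solved by Picard iteration with the factorial bound). Your proposal is correct, and in particular the two genuinely operator-valued points are handled soundly: strong measurability of $V(\cdot)y(\cdot)$ via step-function approximation and Pettis' theorem, and the construction of the $z$-independent exceptional set $E$ from the Lebesgue points of $f$, of $\|V(\cdot)\|_{\cB(\cH)}$, and of the countably many functions $V(\cdot)h_m$, together with the three-$\varepsilon$ argument at a point of $(a,b)\backslash E$.
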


For classical references on initial value problems we refer, for instance, to
\cite[Chs.\ III, VII]{DK74} and \cite[Ch.\ 10]{Di60}, but we emphasize again that our approach minimizes the smoothness hypotheses on $V$ and $f$.

\begin{definition} \lb{d2.4}
Let $(a,b)\subseteq\bbR$ be a finite or infinite interval and assume that
$F,\,Q:(a,b)\to\cB(\cH)$ are two weakly measurable operator-valued functions such
that $\|F(\cdot)\|_{\cB(\cH)},\,\|Q(\cdot)\|_{\cB(\cH)}\in L^1_\loc((a,b);dx)$. Then the
$\cB(\cH)$-valued function $Y:(a,b)\to\cB(\cH)$ is called a solution of
\begin{equation}
- Y'' + Q Y = F   \lb{2.26A}
\end{equation}
if $Y(\cdot)h\in W^{2,1}_\loc((a,b);dx;\cH)$ for every $h\in\cH$ and $-Y''h+QYh=Fh$ holds
a.e.\ on $(a,b)$.
\end{definition}

\begin{corollary} \lb{c2.5}
Let $(a,b)\subseteq\bbR$ be a finite or infinite interval, $x_0\in(a,b)$, $z\in\bbC$, $Y_0,\,Y_1\in\cB(\cH)$, and suppose $F,\,V:(a,b)\to\cB(\cH)$ are two weakly measurable operator-valued functions with
$\|V(\cdot)\|_{\cB(\cH)},\,\|F(\cdot)\|_{\cB(\cH)}\in L^1_\loc((a,b);dx)$. Then there is a
unique $\cB(\cH)$-valued solution $Y(z,\, \cdot \,,x_0):(a,b)\to\cB(\cH)$ of the initial value
problem
\begin{equation}
\begin{cases}
- Y'' + (V - z)Y = F \, \text{ on } \, (a,b)\bs E,  \\
\, Y(x_0) = Y_0, \; Y'(x_0) = Y_1.
\end{cases} \lb{2.3}
\end{equation}
where the exceptional set $E$ is of Lebesgue measure zero and depends only on the representatives chosen for $V$ and $F$ but is independent of $z$. Moreover, the following properties hold:
\begin{enumerate}[$(i)$]
\item For fixed $x_0 \in (a,b)$ and $z \in \bbC$, $Y(z,x,x_0)$ is continuously
differentiable with respect to $x$ on $(a,b)$ in the $\cB(\cH)$-norm.
\item For fixed $x_0 \in (a,b)$ and $z \in \bbC$, $Y'(z,x,x_0)$ is strongly differentiable with respect to $x$ on $(a,b)\bs E$.
\item For fixed $x_0, x \in (a,b)$, $Y(z,x,x_0)$ and $Y'(z,x,x_0)$ are entire in $z$ in
the $\cB(\cH)$-norm.
\end{enumerate}
\end{corollary}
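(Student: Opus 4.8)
The plan is to reduce the operator-valued problem \eqref{2.3} to the vector-valued initial value problem already settled in Theorem \ref{t2.3}, by letting the operators act on an arbitrary $h \in \cH$. For each $h$, Theorem \ref{t2.3} produces a unique $y(z,\, \cdot \,,x_0; Y_0 h, Y_1 h, Fh) \in W^{2,1}_\loc((a,b);dx;\cH)$ solving $-y'' + (V-z)y = Fh$ with $y(x_0) = Y_0 h$, $y'(x_0) = Y_1 h$, and I would define $Y(z,x,x_0)h := y(z,x,x_0; Y_0 h, Y_1 h, Fh)$. Linearity of $Y(z,x,x_0)$ in $h$ is forced by the uniqueness part of Theorem \ref{t2.3}: the data $(Y_0 h, Y_1 h, Fh)$ depend linearly on $h$, superposition of the corresponding solutions solves the combined problem, and uniqueness identifies it with $Y(z,x,x_0)$ applied to the combination. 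Boundedness follows from \eqref{2.1A} measured against the zero solution, giving $\|Y(z,x,x_0)h\|_\cH \le C(z,V)[\|Y_0\|_{\cB(\cH)} + \|Y_1\|_{\cB(\cH)} + \int_{x_0}^x \|F(t)\|_{\cB(\cH)}\,dt]\,\|h\|_\cH$, so $Y(z,x,x_0) \in \cB(\cH)$ with norm bounded locally uniformly in $x$; uniqueness of the operator solution is immediate from the vector uniqueness applied to each $h$.

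Next I would verify that $Y$ solves \eqref{2.3} in the sense of Definition \ref{d2.4} and, simultaneously, establish property $(ii)$. By construction $Y(z,\, \cdot \,,x_0)h \in W^{2,1}_\loc$ for every $h$, and the a.e.\ identity $-y'' + (V-z)y = Fh$ reads $-Y''h + (V-z)Yh = Fh$; in particular $(Y'(z,\, \cdot \,,x_0)h)' = (V-z)Y(z,\, \cdot \,,x_0)h - Fh$ exists a.e.\ for each $h$, which is precisely the strong differentiability of $Y'(z,\, \cdot \,,x_0)$ on $(a,b)\bs E$ asserted in $(ii)$.

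The genuine work lies in upgrading the strong $x$- and $z$-regularity of Theorem \ref{t2.3} to the $\cB(\cH)$-norm statements $(i)$ and $(iii)$; here I cannot simply Bochner-integrate $(V(\cdot)-z)Y(\cdot)$ in operator norm, since $V$ is only weakly measurable and $\cB(\cH)$ is generally non-separable, so every norm estimate must be extracted from the vectorial bounds with constants uniform in $h$ over the unit ball. For $(i)$, writing $y'(s) - y'(x) = \int_x^s [(V(t)-z)y(t) - F(t)h]\,dt$ and using the local norm bound on $Y(t)$ together with $\|F(t)h\|_\cH \le \|F(t)\|_{\cB(\cH)}\|h\|_\cH$ gives $\|y'(s)-y'(x)\|_\cH \le \|h\|_\cH \int_x^s [\|V(t)-z\|_{\cB(\cH)} C' + \|F(t)\|_{\cB(\cH)}]\,dt$ with $C'$ independent of $h$. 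Inserting this into $\varepsilon^{-1}(y(x+\varepsilon)-y(x)) - y'(x) = \varepsilon^{-1}\int_x^{x+\varepsilon}[y'(s)-y'(x)]\,ds$ and taking the supremum over $\|h\|_\cH \le 1$ shows the difference quotient of $Y$ converges to $Y'$ in $\cB(\cH)$-norm, and the same bound yields norm-continuity of $Y'(z,\, \cdot \,,x_0)$; this is exactly $(i)$.

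For $(iii)$, I would first note that $z \mapsto Y(z,x,x_0)$ is locally bounded in $\cB(\cH)$-norm (the Gronwall-type argument behind \eqref{2.1A} permits $C(z,V)$ to be chosen locally bounded in $z$) and that $z \mapsto (g, Y(z,x,x_0)h)_\cH$ is entire for all $g,h \in \cH$ by Theorem \ref{t2.3}$(iv)$; the standard principle that a locally norm-bounded operator family, analytic in this weak sense, is analytic in the $\cB(\cH)$-norm then yields $(iii)$ for $Y$, and the analogous argument (using that $Y'$ is likewise locally norm-bounded and weakly entire) handles $Y'$. Alternatively, one can argue constructively: differentiating the equation in $z$ shows that $\partial_z y$ solves $-(\partial_z y)'' + (V-z)\partial_z y = y$ with zero initial data, so $\partial_z Y(z,x,x_0)$ is again furnished by Theorem \ref{t2.3} and obeys uniform-in-$h$ bounds, from which norm convergence of the $z$-difference quotient follows exactly as in $(i)$. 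The main obstacle throughout is this passage from strong to operator-norm regularity, and it hinges entirely on the uniformity in $h$ of the constant $C(z,V)$ in \eqref{2.1A}.
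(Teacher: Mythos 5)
First, a point of reference: the paper never proves Corollary \ref{c2.5} — Section \ref{s2} only recalls it, with the proof deferred to \cite{GWZ13} — so your proposal has to be judged on whether it would stand as a complete argument. Your overall strategy is the natural one and most of it is correct: defining $Y(z,x,x_0)h := y(z,x,x_0;Y_0h,Y_1h,Fh)$ via Theorem \ref{t2.3} (noting $Fh \in L^1_{\loc}((a,b);dx;\cH)$ by Pettis' theorem and the norm bound on $F$), extracting linearity and uniqueness of $Y$ from the uniqueness assertion of Theorem \ref{t2.3}, boundedness from \eqref{2.1A} against the zero solution, part $(i)$ from the integral representation of $y_h'$ with constants uniform over the unit ball of $\cH$, and part $(iii)$ from weak entirety (Theorem \ref{t2.3}\,$(iv)$) plus local norm boundedness — or, even more cheaply, the uniform boundedness principle — all of this is sound, for $Y$ as well as for $Y'$.

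The genuine gap is in your step 2, i.e., in the exceptional set appearing in \eqref{2.3} and in property $(ii)$. Theorem \ref{t2.3} gives you, for each fixed $h$, a null set $E_h$ depending on the representatives of $V$ and of $f = Fh$, off which $(y_h')'$ exists and the equation holds. You then assert that this ``is precisely'' assertion $(ii)$; but $(ii)$, and the statement of \eqref{2.3} with $E$ depending only on $V$ and $F$ and independent of $z$, require a \emph{single} null set $E$ such that for every $x \in (a,b)\backslash E$ the difference quotients of $Y'(z,\,\cdot\,,x_0)$ converge strongly for \emph{all} $h \in \cH$ simultaneously. ``For every $h$ there is a null set'' does not yield ``there is a null set working for every $h$'': the union $\bigcup_{h \in \cH} E_h$ runs over uncountably many sets. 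The repair is standard but has to be supplied. By separability of $\cH$ fix a countable dense set $\{h_n\}_{n}$ and let $E$ be the union of the sets $E_{h_n}$ together with the points that fail to be Lebesgue points of the scalar functions $\|V(\cdot)\|_{\cB(\cH)}$ and $\|F(\cdot)\|_{\cB(\cH)}$ (a null set depending only on the representatives of $V$ and $F$, not on $z$). For $x \notin E$, the operators $h \mapsto \varepsilon^{-1}\int_x^{x+\varepsilon}\big[(V(t)-zI_{\cH})Y(z,t,x_0)h - F(t)h\big]\,dt$ are, for small $\varepsilon$, Lipschitz in $h$ uniformly in $\varepsilon$ (using the local norm bound on $Y$ and the boundedness of the averages of $\|V\|_{\cB(\cH)}$, $\|F\|_{\cB(\cH)}$ at Lebesgue points); since they converge as $\varepsilon \downarrow 0$ for each $h_n$, this equicontinuity extends the convergence to all $h$, with limit $(V(x)-zI_{\cH})Y(z,x,x_0)h - F(x)h$. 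Only after this argument do you obtain \eqref{2.3} and $(ii)$ with the stated dependence of $E$.
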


Various versions of Theorem \ref{t2.3} and Corollary \ref{c2.5} exist in the literature under varying assumptions on $V$ and $f, F$ (cf.\ the discussion in
\cite{GWZ13} which uses the most general hypotheses to date).

\begin{definition} \lb{d2.6}
Pick $c \in (a,b)$.
The endpoint $a$ (resp., $b$) of the interval $(a,b)$ is called {\it regular} for the operator-valued differential expression $- (d^2/dx^2) + Q(\cdot)$ if it is finite and if $Q$ is weakly measurable and $\|Q(\cdot)\|_{\cB(\cH)}\in  L^1([a,c];dx)$ (resp.,
$\|Q(\cdot)\|_{\cB(\cH)}\in  L^1([c,b];dx)$) for some $c\in (a,b)$. Similarly,
$- (d^2/dx^2) + Q(\cdot)$ is called {\it regular at $a$} (resp., {\it regular at $b$}) if
$a$ (resp., $b$) is a regular endpoint for $- (d^2/dx^2) + Q(\cdot)$.
\end{definition}

We note that if $a$ (resp., $b$) is regular for $- (d^2/dx^2) + Q(x)$, one may allow for
$x_0$ to be equal to $a$ (resp., $b$) in the existence and uniqueness Theorem \ref{t2.3}.

If $f_1, f_2$ are strongly continuously differentiable $\cH$-valued functions, we define the Wronskian of $f_1$ and $f_2$ by
\begin{equation}
W_{*}(f_1,f_2)(x)=(f_1(x),f'_2(x))_\cH - (f'_1(x),f_2(x))_\cH,    \lb{2.31A}
\quad x \in (a,b).
\end{equation}
If $f_2$ is an $\cH$-valued solution of $-y''+Qy=0$ and $f_1$ is an $\cH$-valued
solution of $-y''+Q^*y=0$, their Wronskian $W_{*}(f_1,f_2)(x)$ is $x$-independent, that is,
\begin{equation}
\f{d}{dx} W_{*}(f_1,f_2)(x) = 0 \, \text{ for a.e.\ $x \in (a,b)$}   \lb{2.32A}
\end{equation}
(in fact, by \eqref{2.52A}, the right-hand side of \eqref{2.32A} actually
vanishes for all $x \in (a,b)$).

We decided to use the symbol $W_{*}(\, \cdot \,,\, \cdot \,)$ in \eqref{2.31A} to
indicate its conjugate linear behavior with respect to its first entry.

Similarly, if $F_1,F_2$ are strongly continuously differentiable $\cB(\cH)$-valued
functions, their Wronskian is defined by
\begin{equation}
W(F_1,F_2)(x) = F_1(x) F'_2(x) - F'_1(x) F_2(x), \quad x \in (a,b).    \lb{2.33A}
\end{equation}
Again, if $F_2$ is a $\cB(\cH)$-valued solution of  $-Y''+QY = 0$ and $F_1$ is a
$\cB(\cH)$-valued solution of $-Y'' + Y Q = 0$ (the latter is equivalent to
$- {(Y^{*})}^{\prime\prime} + Q^* Y^* = 0$ and hence can be handled in complete analogy
via Theorem \ref{t2.3} and Corollary \ref{c2.5}, replacing $Q$ by $Q^*$) their Wronskian will be $x$-independent,
\begin{equation}
\f{d}{dx} W(F_1,F_2)(x) = 0 \, \text{ for a.e.\ $x \in (a,b)$.}
\end{equation}

Our main interest lies in the case where $V(\cdot)=V(\cdot)^* \in \cB(\cH)$ is self-adjoint. Thus, we now introduce the following basic assumption:

\begin{hypothesis} \lb{h2.7}
Let $(a,b)\subseteq\bbR$, suppose that $V:(a,b)\to\cB(\cH)$ is a weakly
measurable operator-valued function with $\|V(\cdot)\|_{\cB(\cH)}\in L^1_\loc((a,b);dx)$,
and assume that $V(x) = V(x)^*$ for a.e.\ $x \in (a,b)$.
\end{hypothesis}

Moreover, for the remainder of this paper we assume
\begin{equation}
\alpha = \alpha^* \in \cB(\cH).      \lb{2.4A}
\end{equation}

Assuming Hypothesis \ref{h2.7} and \eqref{2.4A}, we introduce the standard fundamental systems of operator-valued solutions of $\tau y=zy$ as follows: Since $\alpha$ is a bounded self-adjoint operator, one may define the self-adjoint operators $A=\sin(\alpha)$ and $B=\cos(\alpha)$ via the spectral theorem. Given such an operator $\alpha$ and a point $x_0\in(a,b)$ or a regular endpoint for $\tau$, we now
define $\theta_\alpha(z,\, \cdot \,, x_0), \, \phi_\alpha(z,\, \cdot \,,x_0)$ as those $\cB(\cH)$-valued
solutions of $\tau Y=z Y$ (in the sense of Definition \ref{d2.4}) which satisfy the initial
conditions
\begin{equation}
\theta_\alpha(z,x_0,x_0)=\phi'_\alpha(z,x_0,x_0)=\cos(\alpha), \quad
-\phi_\alpha(z,x_0,x_0)=\theta'_\alpha(z,x_0,x_0)=\sin(\alpha).    \lb{2.5}
\end{equation}

By Corollary \ref{c2.5}\,$(iii)$, for any fixed $x, x_0\in(a,b)$, the functions
$\theta_{\alpha}(z,x,x_0)$, $\phi_{\alpha}(z,x,x_0)$,
$\theta_{\alpha}(\ol{z},x,x_0)^*$, and $\phi_{\alpha}(\ol{z},x,x_0)^*$, as well as their strong $x$-derivatives are entire with respect to $z$ in the $\cB(\cH)$-norm.

Since $\theta_{\alpha}(\bar z,\, \cdot \,,x_0)^*$ and
$\phi_{\alpha}(\bar z,\, \cdot \,,x_0)^*$ satisfy
the adjoint equation $-Y''+YV=z Y$ and the same initial conditions as
$\theta_\alpha$ and $\phi_\alpha$, respectively, one can show the following identities (cf.\ \cite{GWZ13}):
\begin{align}
\theta_{\alpha}' (\bar z,x,x_0)^*\theta_{\alpha} (z,x,x_0)-
\theta_{\alpha} (\bar z,x,x_0)^*\theta_{\alpha}' (z,x,x_0)&=0, \label{2.7f}
\\
\phi_{\alpha}' (\bar z,x,x_0)^*\phi_{\alpha} (z,x,x_0)-
\phi_{\alpha} (\bar z,x,x_0)^*\phi_{\alpha}' (z,x,x_0)&=0, \label{2.7g}
\\
\phi_{\alpha}' (\bar z,x,x_0)^*\theta_{\alpha} (z,x,x_0)-
\phi_{\alpha} (\bar z,x,x_0)^*\theta_{\alpha}' (z,x,x_0)&=I_{\cH}, \label{2.7h}
\\
\theta_{\alpha} (\bar z,x,x_0)^*\phi_{\alpha}' (z,x,x_0)
- \theta_{\alpha}' (\bar z,x,x_0)^*\phi_{\alpha} (z,x,x_0)&=I_{\cH}, \label{2.7i}
\end{align}
as well as,
\begin{align}
\phi_{\alpha} (z,x,x_0)\theta_{\alpha} (\bar z,x,x_0)^*-
\theta_{\alpha} (z,x,x_0)\phi_{\alpha} (\bar z,x,x_0)^*&=0, \label{2.7j}
\\
\phi_{\alpha}' (z,x,x_0)\theta_{\alpha}' (\bar z,x,x_0)^*-
\theta_{\alpha}' (z,x,x_0)\phi_{\alpha}' (\bar z,x,x_0)^*&=0, \label{2.7k}
\\
\phi_{\alpha}' (z,x,x_0)\theta_{\alpha} (\bar z,x,x_0)^*-
\theta_{\alpha}' (z,x,x_0)\phi_{\alpha} (\bar z,x,x_0)^*&=I_{\cH}, \label{2.7l}
\\
\theta_{\alpha} (z,x,x_0)\phi_{\alpha}' (\bar z,x,x_0)^*-
\phi_{\alpha} (z,x,x_0)\theta_{\alpha}' (\bar z,x,x_0)^*&=I_{\cH}. \label{2.7m}
\end{align}

Finally, we recall two versions of Green's formula (resp., Lagrange's identity).

\begin{lemma} \label{l2.9}
Let $(a,b)\subseteq\bbR$ be a finite or infinite interval and $[x_1,x_2]\subset(a,b)$. \\
$(i)$ Assume that $f,g\in W^{2,1}_{\rm loc}((a,b);dx;\cH)$. Then
\begin{equation}
\int_{x_1}^{x_2} dx \, [((\tau f)(x),g(x))_\cH-(f(x),(\tau g)(x))_\cH]
= W_{*}(f,g)(x_2)-W_{*}(f,g)(x_1).     \lb{2.52A}
\end{equation}
$(ii)$ Assume that $F,\,G:(a,b)\to\cB(\cH)$ are absolutely continuous operator-valued functions such that $F',\,G'$ are again differentiable and that $F''$, $G''$ are weakly measurable. In addition, suppose that $\|F''\|_\cH,\, \|G''\|_\cH \in L^1_\loc((a,b);dx)$. Then
\begin{equation}
\int_{x_1}^{x_2} dx \, [(\tau F^*)(x)^*G(x) - F(x) (\tau G)(x)] = W(F,G)(x_2) - W(F,G)(x_1).
\lb{2.53A}
\end{equation}
\end{lemma}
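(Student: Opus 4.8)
The plan is to prove both identities by reducing them to the statement that the relevant Wronskian is a locally absolutely continuous antiderivative of the integrand. For part $(i)$, I would first exploit the self-adjointness hypothesis $V(x) = V(x)^*$ (Hypothesis \ref{h2.7}): since $(V(x) f(x), g(x))_\cH = (f(x), V(x) g(x))_\cH$ for a.e.\ $x$, the potential contributions in $((\tau f), g)_\cH - (f, (\tau g))_\cH$ cancel, leaving $(f, g'')_\cH - (f'', g)_\cH$ as the integrand. The crux is then the Leibniz (product) rule for the scalar function $x \mapsto (u(x), v(x))_\cH$ when $u, v \in W^{1,1}_{\loc}((a,b); dx; \cH)$, namely that this function is locally absolutely continuous with $\f{d}{dx}(u,v)_\cH = (u', v)_\cH + (u, v')_\cH$ a.e.

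I would establish this product rule directly from the defining Bochner integral representation $u(x) = u(x_0) + \int_{x_0}^x u'(s)\, ds$ (and likewise for $v$): substituting these representations into $\int_{x_0}^x [(u'(s), v(s))_\cH + (u(s), v'(s))_\cH]\, ds$ and applying Fubini's theorem to the resulting iterated Bochner integrals, the two double integrals combine over the full square while the cross terms reassemble, yielding exactly $(u(x), v(x))_\cH - (u(x_0), v(x_0))_\cH$; this proves both local absolute continuity and the stated formula. Applying the rule to the pairs $(u, v) = (f, g')$ and $(u, v) = (f', g)$ — all four entries lie in $W^{1,1}_{\loc}$ because $f, g \in W^{2,1}_{\loc}$ — gives $\f{d}{dx} W_{*}(f,g) = (f, g'')_\cH - (f'', g)_\cH$ a.e., the terms $(f', g')_\cH$ canceling. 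Integrating this locally absolutely continuous function over $[x_1, x_2]$ via the fundamental theorem of calculus, together with the $V$-cancellation above, yields \eqref{2.52A}.

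For part $(ii)$ I would reduce to part $(i)$ by testing against arbitrary vectors $h, k \in \cH$. Writing $\ti f = F^* h$ and $\ti g = G k$, one has $(h, [(\tau F^*)^* G](x) k)_\cH = ((\tau F^*)(x) h, G(x) k)_\cH = (\tau \ti f, \ti g)_\cH$ and $(h, [F (\tau G)](x) k)_\cH = (F(x)^* h, (\tau G)(x) k)_\cH = (\ti f, \tau \ti g)_\cH$, since $\tau$ commutes with the pointwise application to a fixed vector; here one uses that taking adjoints commutes with strong differentiation, so that $(F^*)' = (F')^*$ and $\ti f = F^* h \in W^{2,1}_{\loc}((a,b); dx; \cH)$ with $\ti f' = (F')^* h$ and $\ti f'' = (F'')^* h$. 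The same identity $(F^*)' = (F')^*$ shows $(h, W(F,G)(x) k)_\cH = (\ti f(x), \ti g'(x))_\cH - (\ti f'(x), \ti g(x))_\cH = W_{*}(\ti f, \ti g)(x)$. Part $(i)$ applied to $\ti f, \ti g$ now gives the scalar identity $(h, [(\tau F^*)^* G - F (\tau G)](x) k)_\cH$ integrating to $(h, [W(F,G)(x_2) - W(F,G)(x_1)] k)_\cH$; since $h, k \in \cH$ are arbitrary, \eqref{2.53A} follows.

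I expect the main obstacle to be the two Bochner-space subtleties that distinguish this from the classical scalar case: proving the Leibniz rule for inner products of strongly locally absolutely continuous $\cH$-valued functions (which requires the integral-representation/Fubini argument rather than a naive difference-quotient manipulation), and justifying in part $(ii)$ that the adjoint operation commutes with strong differentiation under the stated hypotheses. The latter is delicate because strong convergence of difference quotients does not in general pass to adjoints; it is precisely the local integrability of $\|F''\|_\cH$ and $\|G''\|_\cH$, together with the paper's convention that operator derivatives are taken in the strong (pointwise) sense and the fact (noted after Definition \ref{d2.2}) that $Q$ satisfies the standing conditions if and only if $Q^*$ does, that render $F^*, G^*$ twice strongly differentiable with the expected derivatives.
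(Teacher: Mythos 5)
Your proposal is essentially correct, but there is no proof in this paper to compare it against: Lemma \ref{l2.9} is recalled in Section \ref{s2} without proof, the text deferring to \cite{GWZ13} for all results of that section. Measured against the direct computation one would naturally give there --- differentiate the Wronskian via the product rule and apply the fundamental theorem of calculus, which for part $(ii)$ means computing $\f{d}{dx}W(F,G) = FG'' - F''G$ a.e.\ together with $(\tau F^*)^* = -F'' + FV$ (using $V=V^*$), with no detour through part $(i)$ --- your treatment of part $(i)$ is the same in substance, and your Fubini/Bochner-integral proof of the Leibniz rule for $x \mapsto (u(x),v(x))_\cH$, $u,v\in W^{1,1}_{\loc}((a,b);dx;\cH)$, is a complete and correct way to ground it. (You are also right that Hypothesis \ref{h2.7}, i.e.\ $V(x)=V(x)^*$ a.e., must be in force; without it neither identity holds.) Your reduction of part $(ii)$ to part $(i)$ by testing against vectors $h,k\in\cH$ is a clean alternative to the direct operator-valued computation: all analysis is done once at the vector-valued level, and part $(ii)$ becomes bookkeeping with adjoints.

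The one step you assert rather than prove is the commutation $(F^*)' = (F')^*$ and $(F^*)'' = (F'')^*$ in the strong sense, on which your entire reduction rests. You correctly flag that strong convergence of difference quotients does not pass to adjoints, and the hypotheses do resolve this, but the mechanism should be made explicit. For $k,h\in\cH$ one has $(h, F'(s)^* k)_\cH = \ol{(k, F'(s)h)_\cH}$, so $F'(\cdot)^* k$ is weakly measurable, hence strongly measurable by Pettis' theorem (separability of $\cH$); moreover $\|F'(\cdot)^*k\|_\cH \le \|F'(\cdot)\|_{\cB(\cH)} \|k\|_\cH$ is locally bounded, since $F'(\cdot)h$ is continuous for each $h$ and the uniform boundedness principle applies. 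One may therefore pass the representation $F(x)h = F(x_0)h + \int_{x_0}^x F'(s)h\, ds$ through the inner product with $k$ to obtain $F^*(x)k = F^*(x_0)k + \int_{x_0}^x F'(s)^* k\, ds$, which is precisely the statement that $F^*(\cdot)k$ is locally absolutely continuous with $(F^*)' = (F')^*$ a.e.; repeating the argument for $F'$, now using $\|F''(\cdot)\|_{\cB(\cH)}\in L^1_{\loc}((a,b);dx)$, gives $(F^*)'' = (F'')^*$ and hence $\ti f = F^*h \in W^{2,1}_{\loc}((a,b);dx;\cH)$, as your reduction requires. With this inserted your proof is complete, the operator identity \eqref{2.53A} being obtained from the scalar identities in the weak sense, which is the sense in which the operator-valued integral is to be read.
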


\section{Half-Line Weyl--Titchmarsh and Spectral Theory for Schr\"odinger Operators 
with Operator-Valued Potentials} \label{s3}

In this section we recall the basics of Weyl--Titchmarsh and spectral theory for self-adjoint half-line Schr\"odinger operators $H_{\alpha}$ in
$L^2((a,b); dx; \cH)$ associated with the operator-valued differential
expression $\tau =-(d^2/dx^2) I_{\cH} + V(\cdot)$, assuming regularity of the
left endpoint $a$ and the limit-point case at the right endpoint $b$ (see
Definition \ref{d3.6}). These results were proved in \cite{GWZ13} and
\cite{GWZ13b} and we refer to these sources for details and an extensive bibliography on this topic.

As before, $\cH$ denotes a separable Hilbert space and $(a,b)$ denotes
a finite or infinite interval. One recalls that $L^2((a,b);dx;\cH)$ is separable
(since $\cH$ is) and that
\begin{equation}
(f,g)_{L^2((a,b);dx;\cH)} =\int_a^b dx \, (f(x),g(x))_\cH, \quad f,g\in L^2((a,b);dx;\cH).
\end{equation}

Assuming Hypothesis \ref{h2.7} throughout this section, we discuss self-adjoint operators in $L^2((a,b);dx;\cH)$ associated with the operator-valued differential expression 
$\tau =-(d^2/dx^2) I_{\cH} + V(\cdot)$ as suitable restrictions of the {\it maximal} operator 
$\oT_{\max}$ in $L^2((a,b);dx;\cH)$ defined by
\begin{align}
& \oT_{\max} f = \tau f,   \no \\
& f\in \dom(\oT_{\max})=\big\{g\in L^2((a,b);dx;\cH) \,\big|\,
g\in W^{2,1}_{\rm loc}((a,b);dx;\cH);      \lb{3.2} \\
& \hspace*{6.6cm} \tau g\in L^2((a,b);dx;\cH)\big\}.     \no
\end{align}
We also introduce the operator $\dot \oT_{\min}$ in $L^2((a,b);dx;\cH)$
\begin{equation}
\dom(\dot \oT_{\min})=\{g\in\dom(\oT_{\max})\,|\,\supp (g) \,
\text{is compact in} \, (a,b)\},
\end{equation}
and the {\it minimal} operator $\oT_{\min}$ in $L^2((a,b);dx;\cH)$ associated with $\tau$,
\begin{equation}
\oT_{\min} = \ol{\dot \oT_{\min}}.    \lb{3.4}
\end{equation}

One obtains,
\begin{equation}
\oT_{\max} = (\dot \oT_{\min})^*, \quad
\oT_{\max}^* = \ol{\dot \oT_{\min}} = \oT_{\min}.    \lb{3.13a}
\end{equation}

Moreover, Green's formula holds, that is, if $u$ and $v$ are in
$\dom(\oT_{\max})$, then
\begin{equation}\label{3.17A}
(\oT_{\max}u,v)_{L^2((a,b);dx;\cH)}-(u,\oT_{\max}v)_{L^2((a,b);dx;\cH)}
= W_*(u,v)(b) - W_*(u,v)(a).
\end{equation}

\begin{definition} \lb{d3.6}
Assume Hypothesis \ref{h2.7}.
Then the endpoint $a$ (resp., $b$) is said to be of {\it limit-point-type for $\tau$} if
$W_*(u,v)(a)=0$ (resp., $W_*(u,v)(b)=0$) for all $u,v\in\dom(\oT_{\max})$.
\end{definition}

Next, we introduce the subspaces
\begin{equation}
\cD_{z}=\{u\in\dom(\oT_{\max}) \,|\, \oT_{\max}u=z u\}, \quad z \in \bbC.
\end{equation}
For $z\in\bbC\backslash\bbR$, $\cD_{z}$ represent the deficiency subspaces of
$\oT_{\min}$. Von Neumann's theory of extensions of symmetric operators implies that
\begin{equation} \label{3.20A}
\dom(\oT_{\max})=\dom(\oT_{\min}) \dotplus \cD_i \dotplus \cD_{-i}, 
\end{equation}
where $\dotplus$ indicates the direct (but not necessarily orthogonal direct) sum in 
the underlying Hilbert space $L^2((a,b);dx;\cH)$.

For the remainder of this section we now make the following asumptions:

\begin{hypothesis} \lb{h3.9}
In addition to Hypothesis \ref{h2.7} suppose that $a$ is a regular
endpoint for $\tau$ and $b$ is of limit-point-type for $\tau$.
\end{hypothesis}

Given Hypothesis \ref{h3.9}, it has been shown in \cite{GWZ13} that all self-adjoint restrictions, $H_{\alpha}$, of $H_{\max}$, equivalently, all self-adjoint extensions of $H_{\min}$, are parametrized by $\alpha = \alpha^* \in \cB(\cH)$,  with domains given by
\begin{equation}
\dom(H_{\alpha})=\{u\in\dom(\oT_{\max}) \,|\, \sin(\alpha)u'(a)+\cos(\alpha)u(a)=0\}.   \lb{3.9} 
\end{equation}

Next, we recall that (normalized) $\cB(\cH)$-valued and square integrable solutions of 
$\tau Y=zY$, denoted by $\psi_{\alpha}(z,\, \cdot \,,a)$,
$z\in\bbC\backslash\sigma(H_{\alpha})$, and traditionally called
{\it Weyl--Titchmarsh} solutions of $\tau Y = z Y$, and the $\cB(\cH)$-valued
Weyl--Titchmarsh functions $m_{\alpha}(z,a)$, have been constructed in \cite{GWZ13} 
to the effect that
\begin{equation}
\psi_{\alpha}(z,x,a)=\theta_{\alpha}(z,x,a)
+ \phi_{\alpha}(z,x,a) m_{\alpha}(z,a),
\quad z \in \bbC\backslash\sigma(H_{\alpha}), \; x \in [a,b).     \lb{3.58A}
\end{equation}
Then $\psi_{\alpha}(\, \cdot \,,x,a)$ is analytic in $z$ on
$\bbC\backslash\bbR$ for fixed $x \in [a,b)$, and
\begin{equation}
\int_a^b dx \, \|\psi_{\alpha}(z,x,a) h\|_{\cH}^2 < \infty, \quad
h \in \cH, \; z \in \bbC\backslash\sigma(H_{\alpha}),
\end{equation}
in particular, 
\begin{equation}
\psi_{\alpha}(z, \, \cdot \, ,a) h \in L^2((a,b); dx; \cH), \quad h \in \cH, \; 
z \in \bbC \backslash \sigma(H_{\alpha}), 
\end{equation}
and 
\begin{equation}
\ker(H_{\max} - z I_{L^2((a,b); dx; \cH)}) = \{\psi_{\alpha}(z, \, \cdot \, ,a) h \,|\, h \in \cH\}. 
\quad z \in \bbC \backslash \bbR. 
\end{equation}
In addition, $m_{\alpha}(z,a)$ is a $\cB(\cH)$-valued Nevanlinna--Herglotz function (cf.\ Definition \ref{dA.4}), and
\begin{equation}
m_{\alpha}(z,a)=m_{\alpha}(\ol z, a)^*, \quad
z \in \bbC\backslash\sigma(H_{\alpha}).    \lb{3.59A}
\end{equation}
Given $u \in \cD_{z}$, the operator $m_{0}(z,a)$ assigns Neumann boundary
data $u'(a)$ to the Dirichlet boundary data $u(a)$, that is, $m_{0}(z,a)$ is
the ($z$-dependent) Dirichlet-to-Neumann map.

With the help of Weyl--Titchmarsh solutions one can now describe the resolvent
of $H_{\alpha}$ as follows,
\begin{align}
\begin{split}
\big((H_{\alpha} - z I_{L^2((a,b);dx;\cH)})^{-1} u\big)(x)
= \int_a^b dx' \, G_{\alpha}(z,x,x')u(x'),& \\
u \in L^2((a,b);dx;\cH), \; z \in \rho(H_{\alpha}), \; x \in [a,b),&
\end{split}
\end{align}
with the $\cB(\cH)$-valued Green's function $G_{\alpha}(z,\,\cdot \,, \, \cdot \,)$ given by
\begin{equation} \label{3.63A}
G_{\alpha}(z,x,x') = \begin{cases}
\phi_{\alpha}(z,x,a) \psi_{\alpha}(\ol{z},x',a)^*, & a\leq x \leq x'<b, \\
\psi_{\alpha}(z,x,a) \phi_{\alpha}(\ol{z},x',a)^*, & a\leq x' \leq x<b,
\end{cases}  \quad z\in\bbC\backslash\bbR.
\end{equation}

Next, we replace the interval $(a,b)$ by the right half-line $(x_0,\infty)$ and indicate this
change with the additional subscript $+$ in $H_{+,\min}$, $H_{+,\max}$, $H_{+,\alpha}$,
$\psi_{+,\alpha}(z,\, \cdot \,,x_0)$, $m_{+,\alpha}(\, \cdot \,,x_0)$,
$d\rho_{+,\alpha}(\, \cdot \,,x_0)$, $G_{+,\alpha}(z,\, \cdot \, , \, \cdot \,) $, etc., to distinguish
these quantities from the analogous objects on the left half-line $(-\infty, x_0)$ (later indicated
with the subscript $-$), which are needed in our subsequent full-line Section \ref{s4}.

Our aim is to relate the family of spectral projections,
$\{E_{H_{+,\alpha}}(\lambda)\}_{\lambda\in\bbR}$, of the self-adjoint
operator $H_{+,\alpha}$ and the $\cB(\cH)$-valued spectral function
$\rho_{+,\alpha}(\lambda,x_0)$, $\lambda\in\bbR$, which generates the
operator-valued measure $d\rho_{+,\alpha}(\, \cdot \, , x_0)$ in the Nevanlinna--Herglotz representation
\eqref{2.25} of $m_{+,\alpha}(\, \cdot \, , x_0)$:
\begin{equation}
m_{+,\alpha}(z,x_0) = c_{+,\alpha}
+ \int_{\bbR}  d\rho_{+,\alpha}(\lambda,x_0) \Big[
\frac{1}{\lambda-z} -\frac{\lambda}{\lambda^2 + 1}\Big],  \quad
z\in\bbC\backslash \sigma(H_{+,\alpha}),   \lb{2.25}
\end{equation}
where
\begin{equation}
c_{+,\alpha} = \Re(m_{+,\alpha}(i,x_0)) \in \cB(\cH),    \lb{2.25a}
\end{equation}
and $d\rho_{+,\alpha}(\, \cdot \, , x_0)$ is a $\cB(\cH)$-valued measure satisfying
\begin{equation}
\int_{\bbR} d(e,\rho_{+,\alpha}(\lambda, x_0)e)_{\cH} \,
(\lambda^2 + 1)^{-1} < \infty, \quad e\in\cH.    \lb{2.26}
\end{equation}
In addition, the Stieltjes inversion formula for the
nonnegative $\cB(\cH)$-valued measure $d\rho_{+,\alpha}(\, \cdot \,,x_0)$ reads
\begin{equation}
\rho_{+,\alpha}((\lambda_1,\lambda_2],x_0)
=\f1\pi \lim_{\delta\downarrow 0}
\lim_{\varepsilon\downarrow 0} \int^{\lambda_2+\delta}_{\lambda_1+\delta}
d\lambda \, \Im(m_{+,\alpha}(\lambda +i\varepsilon,x_0)), \quad \lambda_1,
\lambda_2 \in\bbR, \; \lambda_1<\lambda_2 \lb{2.27}
\end{equation}
(cf.\ Appendix \ref{sA} for details on Nevanlinna--Herglotz functions). We also note that
$m_{+,\alpha}(\, \cdot\,,x_0)$ and $m_{+,\beta}(\,\cdot\,,x_0)$ are related by the
following linear fractional transformation,
\begin{equation}\label{3.67A}
m_{+,\beta}(\,\cdot\,,x_0) = (C+Dm_{+,\alpha}(\,\cdot\,,x_0))(A+Bm_{+\alpha}(\,\cdot\,,x_0))^{-1},
\end{equation}
where
\begin{equation}
\begin{pmatrix}A&B\\ C&D\end{pmatrix}
= \begin{pmatrix}\cos(\beta) & \sin(\beta) \\ -\sin(\beta) & \cos(\beta) \end{pmatrix}
\begin{pmatrix}\cos(\alpha) & -\sin(\alpha) \\ \sin(\alpha) & \cos(\alpha) \end{pmatrix}.
\end{equation}

An important consequence of \eqref{3.67A} and the fact that the $m$-functions take values in $\cB(\cH)$ is the following invertibility result.

\begin{theorem}\lb{t3.3}
Assume Hypothesis \ref{h3.9}, then $[\Im(m_{+,\al}(z,x_0))]^{-1} \in \cB(\cH)$ for all 
$z\in\bbC\bs\bbR$ and $\al=\al^*\in\cB(\cH)$.
\end{theorem}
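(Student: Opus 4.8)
The plan is to reduce the bounded invertibility of $\Im(m_{+,\al}(z,x_0))$ to a coercivity estimate for the Weyl--Titchmarsh solution $\psi_{+,\al}$, and then to establish that estimate by elementary ODE considerations. First I would record the basic quadratic-form identity
\[
(h, \Im(m_{+,\al}(z,x_0)) h)_\cH = \Im(z)\, \|\psi_{+,\al}(z,\,\cdot\,,x_0) h\|_{L^2((x_0,\infty);dx;\cH)}^2, \qquad h \in \cH,\ z\in\bbC\bs\bbR .
\]
This follows from Green's formula \eqref{3.17A} applied to $u=v=\psi_{+,\al}(z,\,\cdot\,,x_0)h$, which lies in $\dom(\oT_{\max})$ since $\tau(\psi_{+,\al}h) = z\,\psi_{+,\al}h \in L^2$: the boundary term at $+\infty$ vanishes by the limit-point hypothesis (Definition \ref{d3.6}), while the initial conditions \eqref{2.5} and $\cos(\al)^2+\sin(\al)^2=I_\cH$ give $W_{*}(\psi_{+,\al}h,\psi_{+,\al}h)(x_0)=2i\,(h,\Im(m_{+,\al}(z,x_0))h)_\cH$. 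The same computation yields the companion identity $\|\psi_{+,\al}(z,x_0,x_0)h\|_\cH^2+\|\psi_{+,\al}'(z,x_0,x_0)h\|_\cH^2 = \|h\|_\cH^2+\|m_{+,\al}(z,x_0)h\|_\cH^2 \ge \|h\|_\cH^2$, so the Cauchy data of $\psi_{+,\al}(z,\,\cdot\,,x_0)h$ at $x_0$ have $\cH\oplus\cH$-norm bounded below by $\|h\|_\cH$.

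Since $\Im(m_{+,\al}(z,x_0))$ is self-adjoint and (for $\Im(z)>0$) nonnegative by the first identity, bounded invertibility is equivalent to the coercivity estimate $\|\psi_{+,\al}(z,\,\cdot\,,x_0)h\|_{L^2}^2 \ge c\,\|h\|_\cH^2$ for some $c>0$ independent of $h$. I would emphasize here that positivity alone is \emph{not} sufficient when $\dim(\cH)=\infty$ (a nonnegative operator may have $0$ in its spectrum); the entire content is the \emph{uniform} lower bound, which is precisely where the $\cB(\cH)$-valued nature stressed in the statement enters. It clearly suffices to bound the $L^2$-norm over a single compact subinterval $[x_0,x_0+\ve]$ from below by $\|h\|_\cH^2$.

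To prove this I would pass to the first-order system for $(\psi_{+,\al}h,\psi_{+,\al}'h)$, whose coefficient has locally integrable operator norm, and apply Gronwall's inequality in both directions on $[x_0,x_0+\ve]$. The upper bound, together with boundedness of $m_{+,\al}(z,x_0)$, shows that $\{\psi_{+,\al}(z,\,\cdot\,,x_0)h:\|h\|_\cH\le 1\}$ is uniformly bounded and Lipschitz (hence equicontinuous) on $[x_0,x_0+\ve]$; the lower bound keeps $\|(\psi_{+,\al}h)(x)\|_\cH^2+\|(\psi_{+,\al}'h)(x)\|_\cH^2$ comparable to its value $\ge\|h\|_\cH^2$ at $x_0$. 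The main obstacle I anticipate is upgrading $L^2$-smallness of the position $\psi_{+,\al}h$ to smallness of the full Cauchy data at $x_0$, since the derivative is not controlled by the $L^2$-norm of the function itself. The clean way around this is a contradiction argument: if coercivity fails there are $h_n$ with $\|h_n\|_\cH=1$ and $\psi_{+,\al}(z,\,\cdot\,,x_0)h_n\to 0$ in $L^2([x_0,x_0+\ve];dx;\cH)$; equicontinuity upgrades this to uniform convergence to $0$, and the second-order expansion $(\psi_{+,\al}h_n)(x)=(\psi_{+,\al}h_n)(x_0)+(x-x_0)(\psi_{+,\al}'h_n)(x_0)+R_n(x)$, with $\|R_n(x)\|_\cH\le \sup\|\psi_{+,\al}h_n\|_\cH\int_{x_0}^{x_0+\ve}(\|V\|_{\cB(\cH)}+|z|)\,dx$, then forces both $(\psi_{+,\al}h_n)(x_0)\to 0$ and $(\psi_{+,\al}'h_n)(x_0)\to 0$, contradicting the companion identity. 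Finally, the case $\Im(z)<0$ follows from the analog of \eqref{3.59A}, giving $\Im(m_{+,\al}(z,x_0))=-\Im(m_{+,\al}(\ol z,x_0))$; alternatively, once the estimate holds for one $\al$, the linear fractional transformation \eqref{3.67A} transfers it to every $\beta=\beta^*\in\cB(\cH)$ through the conjugation identity $\Im(m_{+,\beta})=(A+Bm_{+,\al})^{-*}\,\Im(m_{+,\al})\,(A+Bm_{+,\al})^{-1}$, whose outer factors are boundedly invertible.
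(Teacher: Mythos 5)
Your proposal is correct, and it takes a genuinely different route from the paper. The paper's proof is purely algebraic: fixing $z\in\bbC\bs\bbR$, it specializes the linear fractional transformation \eqref{3.67A} to the $z$-dependent boundary parameter $\beta=\beta(z)=\mathrm{arccot}(-\Re(m_{+,0}(z,x_0)))$, for which $\cos(\beta)-\sin(\beta)m_{+,\beta}(z,x_0)=[\sin(\beta)\,i\,\Im(m_{+,0}(z,x_0))]^{-1}$; since the left-hand side lies in $\cB(\cH)$, so does $[\Im(m_{+,0}(z,x_0))]^{-1}$, and the passage from $\al=0$ to general $\al$ is exactly the conjugation identity you list as an ``alternative'' at the end. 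Thus the paper's entire analytic content is outsourced to the fact, established in the earlier Weyl--Titchmarsh theory, that $m_{+,\beta}(z,x_0)$ exists as a bounded operator for \emph{every} self-adjoint $\beta\in\cB(\cH)$, including this $z$-dependent choice. Your argument instead works directly with the Herglotz positivity \eqref{2.62}: you reduce bounded invertibility of the nonnegative operator $\Im(m_{+,\al}(z,x_0))$ (for $\Im(z)>0$; the case $\Im(z)<0$ follows from \eqref{3.59A}) to the coercivity $\|\psi_{+,\al}(z,\cdot,x_0)h\|_{L^2}^2\ge c\|h\|_{\cH}^2$, localize it to $[x_0,x_0+\ve]$, and exclude failure by a compactness-free contradiction: Gronwall yields uniform bounds and uniform Lipschitz continuity of the family $\{\psi_{+,\al}(z,\cdot,x_0)h_n\}$, $L^2$-smallness then upgrades to uniform smallness, and the Taylor expansion with integral remainder forces both components of the Cauchy data at $x_0$ to vanish, contradicting $\|\psi_{+,\al}(z,x_0,x_0)h\|^2_\cH+\|\psi_{+,\al}'(z,x_0,x_0)h\|^2_\cH\ge\|h\|^2_\cH$ --- which is precisely where the boundary-condition normalization and $\sin^2(\al)+\cos^2(\al)=I_\cH$ enter. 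Each step checks out; in particular you correctly identify that positivity alone is insufficient in infinite dimensions and you never invoke compactness of the unit ball of $\cH$. As for what each approach buys: the paper's proof is a few lines and stays entirely inside $m$-function algebra, but the operator-arccot trick is something of a deus ex machina and offers no mechanism; yours is longer but self-contained given \eqref{3.17A}, \eqref{2.5}, and the limit-point property, it explains \emph{why} invertibility holds (a Weyl solution cannot be $L^2$-small near the regular endpoint without its Cauchy data being small), it localizes the obstruction to an arbitrarily small neighborhood of $x_0$, and it would survive in settings where the linear-fractional-transformation structure is not available.
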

\begin{proof}
Let $z\in\bbC\bs\bbR$ be fixed. We first show that $[\Im(m_{+,0}(z,x_0))]^{-1} \in \cB(\cH)$. By \eqref{3.67A},
\begin{align}\lb{3.21}
m_{+,\beta}(z,x_0) = [\cos(\beta)m_{+,0}(z,x_0)-\sin(\beta)][\sin(\beta)m_{+,0}(z,x_0)+\cos(\beta)]^{-1},
\end{align}
hence using $\sin^2(\beta)+\cos^2(\beta)=I_\cH$ and commutativity of $\sin(\beta)$ and 
$\cos(\beta)$, one gets
\begin{align}
\cos(\beta)-\sin(\beta)m_{+,\beta}(z,x_0) = [\sin(\beta)m_{+,0}(z,x_0)+\cos(\beta)]^{-1}.
\end{align}
Taking $\beta = \beta(z) = {\rm arccot}(-\Re(m_{+,0}(z,x_0))) \in\cB(\cH)$ yields
\begin{align}
\cos(\beta)-\sin(\beta)m_{+,\beta}(z,x_0) = [\sin(\beta)i\Im(m_{+,0}(z,x_0))]^{-1}, 
\end{align}
and since the left-hand side is in $\cB(\cH)$, also $[\Im(m_{+,0}(z,x_0))]^{-1} \in \cB(\cH)$.

Next, we show that for any $\al=\al^*\in\cB(\cH)$, $[\Im(m_{+,\al}(z,x_0))]^{-1} \in \cB(\cH)$. Replacing $\beta$ by $\al$ in \eqref{3.21} and noting that both $\sin(\al)$ and $\cos(\al)$ are self-adjoint, one obtains
\begin{align}
m_{+,\al}(z,x_0) &= [\cos(\al)m_{+,0}(z,x_0)-\sin(\al)][\sin(\al)m_{+,0}(z,x_0)+\cos(\al)]^{-1},  \no \\
m_{+,\al}(z,x_0)^* &= [m_{+,0}(z,x_0)^*\sin(\al)+\cos(\al)]^{-1}[m_{+,0}(z,x_0)^*\cos(\al)-\sin(\al)],
\end{align}
and consequently
\begin{align}
2i\Im(m_{+,\al}(z,x_0)) &= m_{+,\al}(z,x_0) - m_{+,\al}(z,x_0)^*   \no \\
& = [m_{+,0}(z,x_0)^*\sin(\al)+\cos(\al)]^{-1} [2i\Im(m_{+,0}(z,x_0))]   \no \\
& \quad \times [\sin(\al)m_{+,0}(z,x_0) + \cos(\al)]^{-1}.  
\end{align}
Since $[\Im(m_{+,0}(z,x_0))]^{-1} \in \cB(\cH)$, it follows that 
$[\Im(m_{+,\al}(z,x_0))]^{-1} \in \cB(\cH)$.
\end{proof}

In the following, $C_0^\infty((c,d); \cH)$, $-\infty \leq c<d\leq \infty$,
denotes the usual space of infinitely differentiable $\cH$-valued functions of
compact support contained in $(c,d)$.

\begin{theorem} \lb{t2.5}
Assume Hypothesis \ref{h3.9} and let  $f,g \in C^\infty_0((x_0,\infty); \cH)$,
$F\in C(\bbR)$, and $\lambda_1, \lambda_2 \in\bbR$,
$\lambda_1<\lambda_2$. Then,
\begin{align}
\begin{split}
& \big(f,F(H_{+,\alpha})E_{H_{+,\alpha}}((\lambda_1,\lambda_2])g \big)_{L^2((x_0,\infty);dx;\cH)}
\\
& \quad =  \big(\hatt f_{+,\alpha},M_FM_{\chi_{(\lambda_1,\lambda_2]}} \hatt
g_{+,\alpha}\big)_{L^2(\bbR;d\rho_{+,\alpha}(\, \cdot \,,x_0);\cH)},    \lb{2.28}
\end{split}
\end{align}
where we introduced the notation
\begin{equation}
\hatt u_{+,\alpha}(\lambda)=\int_{x_0}^\infty dx \,
\phi_\alpha(\lambda,x,x_0)^* u(x), \quad \lambda \in\bbR, \;
u \in C^\infty_0((x_0,\infty); \cH), \lb{2.29}
\end{equation}
and $M_G$ denotes the maximally defined operator of multiplication by the
function $G \in C(\bbR)$ in the Hilbert space $L^2(\bbR;d\rho_{+,\alpha};\cH)$,
\begin{align}
& \big(M_G\hatt u\big)(\lambda)=G(\lambda)\hatt u(\lambda)
\, \text{ for $\rho_{+,\alpha}$-a.e.\ $\lambda\in\bbR$},    \lb{2.30} \\
& \, \hatt u \in\dom(M_G)=\big\{\hatt v \in L^2(\bbR;d\rho_{+,\alpha}(\,\cdot\,,x_0);\cH) \,\big|\,
G\hatt v \in L^2(\bbR;d\rho_{+,\alpha}(\,\cdot\,,x_0);\cH)\big\}.   \no
\end{align}
Here $\rho_{+,\alpha}(\, \cdot \, , x_0)$ generates the operator-valued measure in the
Nevanlinna--Herglotz representation of the $\cB(\cH)$-valued Weyl--Titchmarsh function
$m_{+,\alpha}(\, \cdot \, , x_0) \in \cB(\cH)$ $($cf.\ \eqref{2.25}$)$.
\end{theorem}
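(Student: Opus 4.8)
The plan is to derive \eqref{2.28} from Stone's formula applied to the self-adjoint operator $H_{+,\alpha}$, reducing everything to the explicit Green's function \eqref{3.63A} and the Stieltjes inversion formula \eqref{2.27}. It suffices to treat the case $F\equiv 1$: once the spectral-measure identity
\[
\big(f, E_{H_{+,\alpha}}((\lambda_1,\lambda_2])g\big)_{L^2((x_0,\infty);dx;\cH)} = \int_{(\lambda_1,\lambda_2]}\big(\hatt f_{+,\alpha}(\lambda), d\rho_{+,\alpha}(\lambda,x_0)\hatt g_{+,\alpha}(\lambda)\big)_{\cH}
\]
is known for all $\lambda_1<\lambda_2$, it identifies the scalar Borel measures $d\big(f,E_{H_{+,\alpha}}(\lambda)g\big)$ and $\big(\hatt f_{+,\alpha}(\lambda), d\rho_{+,\alpha}(\lambda,x_0)\hatt g_{+,\alpha}(\lambda)\big)_{\cH}$ on $\bbR$; integrating the continuous function $F$ against these equal measures then produces exactly the factor $M_F$ on the right-hand side of \eqref{2.28}. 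Thus I would first prove the spectral-measure identity and insert $F$ only at the very end.

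For the spectral-measure identity I would invoke Stone's formula in the $\delta$-shifted form matching \eqref{2.27},
\[
\big(f,E_{H_{+,\alpha}}((\lambda_1,\lambda_2])g\big)_{L^2} = \lim_{\delta\downarrow 0}\lim_{\varepsilon\downarrow 0}\f{1}{2\pi i}\int_{\lambda_1+\delta}^{\lambda_2+\delta}d\lambda\,\big(f,[(H_{+,\alpha}-(\lambda+i\varepsilon)I)^{-1}-(H_{+,\alpha}-(\lambda-i\varepsilon)I)^{-1}]g\big)_{L^2}.
\]
Substituting \eqref{3.63A}, writing $\psi_\alpha=\theta_\alpha+\phi_\alpha m_{+,\alpha}$, and using $m_{+,\alpha}(\bar z,x_0)=m_{+,\alpha}(z,x_0)^*$ (see \eqref{3.59A}), the resolvent kernel splits as $G_\alpha(z,x,x')=G_\alpha^0(z,x,x')+\phi_\alpha(z,x,x_0)m_{+,\alpha}(z,x_0)\phi_\alpha(\bar z,x',x_0)^*$, where the regular kernel $G_\alpha^0$ is built only from $\theta_\alpha,\phi_\alpha$ (hence entire in $z$) and equals $\phi_\alpha(z,x,x_0)\theta_\alpha(\bar z,x',x_0)^*$ for $x\le x'$ and $\theta_\alpha(z,x,x_0)\phi_\alpha(\bar z,x',x_0)^*$ for $x\ge x'$. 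The integral operator $R^0(z)$ with kernel $G_\alpha^0$ has holomorphic matrix element $\big(f,R^0(z)g\big)_{L^2}$, and since $G_\alpha^0(\lambda,x,x')^*=G_\alpha^0(\lambda,x',x)$ for real $\lambda$, the difference $\big(f,[R^0(\lambda+i\varepsilon)-R^0(\lambda-i\varepsilon)]g\big)_{L^2}$ tends to $0$ pointwise in $\lambda$ as $\varepsilon\downarrow 0$; as $f,g$ have compact support and $\theta_\alpha,\phi_\alpha$ are jointly continuous (Corollary \ref{c2.5}), dominated convergence shows the regular part does not contribute.

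For the singular part, carrying out the $x$- and $x'$-integrations and abbreviating $\check f(z)=\int_{x_0}^\infty \phi_\alpha(z,x,x_0)^* f(x)\,dx$ (so that $\check f(\lambda)=\hatt f_{+,\alpha}(\lambda)$ for real $\lambda$), the remaining integrand reduces to $\big(\check f(z),m_{+,\alpha}(z,x_0)\check g(\bar z)\big)_{\cH}-\big(\check f(\bar z),m_{+,\alpha}(\bar z,x_0)\check g(z)\big)_{\cH}$. As $\varepsilon\downarrow 0$ the entire-in-$z$ transforms satisfy $\check f(\lambda\pm i\varepsilon)\to\hatt f_{+,\alpha}(\lambda)$ and $\check g(\lambda\pm i\varepsilon)\to\hatt g_{+,\alpha}(\lambda)$ uniformly on the compact $\lambda$-range, so this integrand should collapse to $2i\big(\hatt f_{+,\alpha}(\lambda),\Im(m_{+,\alpha}(\lambda+i\varepsilon,x_0))\hatt g_{+,\alpha}(\lambda)\big)_{\cH}$; dividing by $2\pi i$ and invoking the Stieltjes inversion \eqref{2.27} would then yield $\int_{(\lambda_1,\lambda_2]}\big(\hatt f_{+,\alpha}(\lambda),d\rho_{+,\alpha}(\lambda,x_0)\hatt g_{+,\alpha}(\lambda)\big)_{\cH}$, as desired.

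The hard part will be exactly this last limit interchange: since $m_{+,\alpha}(\lambda+i\varepsilon,x_0)$ need not converge pointwise and may grow as $\varepsilon\downarrow 0$, one cannot simply replace $\check f(\lambda\pm i\varepsilon)$ by $\hatt f_{+,\alpha}(\lambda)$ inside the integral against $\Im(m_{+,\alpha})$. I would resolve this using the positivity $\Im(m_{+,\alpha}(\cdot,x_0))\ge 0$ together with the uniform bound $\sup_{\varepsilon\in(0,1)}\int_{\lambda_1}^{\lambda_2}\big(e,\Im(m_{+,\alpha}(\lambda+i\varepsilon,x_0))e\big)_{\cH}\,d\lambda<\infty$ (a consequence of \eqref{2.27} and \eqref{2.26}), estimating the error terms through the operator Cauchy--Schwarz inequality $|(\xi,\Im(m_{+,\alpha})\eta)_{\cH}|\le(\xi,\Im(m_{+,\alpha})\xi)_{\cH}^{1/2}(\eta,\Im(m_{+,\alpha})\eta)_{\cH}^{1/2}$ and the uniform smallness of $\|\check f(\lambda\pm i\varepsilon)-\hatt f_{+,\alpha}(\lambda)\|_{\cH}$. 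Equivalently, I would approximate the continuous, compactly-supported maps $\lambda\mapsto\hatt f_{+,\alpha}(\lambda),\hatt g_{+,\alpha}(\lambda)$ uniformly by $\cH$-valued step functions, reducing the singular-part integral to finitely many scalar integrals $\f1\pi\int_I\big(c,\Im(m_{+,\alpha}(\lambda+i\varepsilon,x_0))d\big)_{\cH}\,d\lambda$ to which the polarized scalar form of \eqref{2.27} applies, the uniform bound controlling the approximation error independently of $\varepsilon$. Passing $F$ through at the very end, as explained above, then completes the proof.
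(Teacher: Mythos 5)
Your overall strategy is the right one, and it is in fact the strategy behind the paper's proof: the paper itself does not prove Theorem \ref{t2.5} in this manuscript but defers to \cite{GWZ13b}, recording only the key tool, namely Stone's formula in the weak sense (Lemma \ref{l2.4a}); your steps (Stone's formula in the $\delta$-shifted form matching \eqref{2.27}, substitution of the Green's function \eqref{3.63A}, splitting $G_\alpha = G_\alpha^0 + \phi_\alpha m_{+,\alpha}\phi_\alpha^*$ with the entire part $G_\alpha^0$ contributing nothing in the limit, reduction of the singular part to $\Im(m_{+,\alpha})$ and Stieltjes inversion, and insertion of $F$ at the end by identifying the two scalar Borel measures) reproduce this route, and the treatment of the regular part mirrors the computation the paper does display in the proof of Theorem \ref{t6.2}.

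There is, however, a genuine gap in your handling of the one step you correctly single out as hard. Writing $z=\lambda+i\varepsilon$, the singular part of the Stone integrand is $D=\big(\check f(z),m_{+,\alpha}(z,x_0)\check g(\bar z)\big)_{\cH}-\big(\check f(\bar z),m_{+,\alpha}(z,x_0)^*\check g(z)\big)_{\cH}$, and because the two terms carry the arguments $(z,\bar z)$ and $(\bar z,z)$ in opposite order, any replacement of $\check f(\lambda\pm i\varepsilon)$, $\check g(\lambda\mp i\varepsilon)$ by $\hatt f_{+,\alpha}(\lambda)$, $\hatt g_{+,\alpha}(\lambda)$ leaves error terms paired against the \emph{full} operator $m_{+,\alpha}(\lambda+i\varepsilon,x_0)$ -- its real part included -- and not merely against $\Im(m_{+,\alpha}(\lambda+i\varepsilon,x_0))$; for instance one is left with terms of the form $\big(\check f(z)-\check f(\bar z), m_{+,\alpha}(z,x_0)\check g(\bar z)\big)_{\cH}$. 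Neither of your two tools touches these: the Cauchy--Schwarz inequality and the uniform bound on $\int_{\lambda_1}^{\lambda_2}(e,\Im(m_{+,\alpha}(\lambda+i\varepsilon,x_0))e)_{\cH}\,d\lambda$ control only pairings against the nonnegative form $\Im(m_{+,\alpha})$. Nor can one fall back on norm estimates: the error vectors are $O(\varepsilon)$, but $\|m_{+,\alpha}(\lambda+i\varepsilon,x_0)\|_{\cB(\cH)}$ can be of size $1/\varepsilon$, and in the operator-valued setting even the \emph{integrated} norm need not help -- for the measure $\rho=\sum_n P_n\delta_{t_n}$ with $\{t_n\}$ dense in $[\lambda_1,\lambda_2]$ and $P_n$ mutually orthogonal rank-one projections (admissible, since $\rho(\bbR)=I_{\cH}$) one has $\|\Im(m(\lambda+i\varepsilon))\|_{\cB(\cH)}\equiv 1/\varepsilon$ on the whole interval, so $O(\varepsilon)\cdot\int_{\lambda_1}^{\lambda_2}\|m(\lambda+i\varepsilon)\|\,d\lambda=O(1)$ and does not vanish. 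Closing this requires an additional idea beyond what you list: for example, Taylor-expand $\check f(z)-\check f(\bar z)=2i\varepsilon\check f'(\lambda)+O(\varepsilon^3)$, reduce by step-function approximation and polarization to \emph{scalar} Herglotz functions $h(z)=(a,m_{+,\alpha}(z,x_0)b)_{\cH}$, and prove $\varepsilon\int_I h(\lambda+i\varepsilon)\,d\lambda\to 0$, which needs the estimate $\int_I \Re(h(\lambda+i\varepsilon))\,d\lambda=O(\log(1/\varepsilon))$ for the real part (not only the bound on the imaginary part); alternatively, one can use a contour-deformation argument exploiting the analyticity of $\check f,\check g$ and of $m_{+,\alpha}$ in $\bbC_+$. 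Without some such argument for the $\Re(m_{+,\alpha})$-pairings, the limit interchange at the heart of your proof is not justified.
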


For a discussion of the model Hilbert space $L^2(\bbR;d\Sigma;\cK)$ for
operator-valued measures $\Sigma$ we refer to \cite{GKMT01},
\cite{GWZ13a} and \cite[App.~B]{GWZ13b}.

In the context of operator-valued potential coefficients of half-line
Schr\"odinger operators we also refer to M.\ L.\ Gorbachuk \cite{Go68}, Sait{\= o} \cite{Sa71}, 
and Trooshin \cite{Tr00}.

The proof of Theorem \ref{t2.5} in \cite{GWZ13b} relies on a version of
Stone's formula in the weak sense (cf., e.g., \cite[p.\ 1203]{DS88}):

\begin{lemma} \lb{l2.4a}
Let $T$ be a self-adjoint operator in a complex separable Hilbert space
$\cH$ $($with scalar product denoted by $(\, \cdot \,,\, \cdot \,)_\cH$, linear in the
second factor$)$ and denote by $\{E_T(\lambda)\}_{\lambda\in\bbR}$ the
family of self-adjoint right-continuous spectral projections associated
with $T$, that is, $E_T(\lambda)=\chi_{(-\infty,\lambda]}(T)$,
$\lambda\in\bbR$. Moreover, let $f,g \in\cH$, $\lambda_1,\lambda_2\in\bbR$,
$\lambda_1<\lambda_2$, and $F\in C(\bbR)$. Then,
\begin{align}
&(f,F(T)E_{T}((\lambda_1,\lambda_2])g)_{\cH} \no \\
& \quad = \lim_{\delta\downarrow 0}\lim_{\varepsilon\downarrow 0}
\frac{1}{2\pi i}
\int_{\lambda_1+\delta}^{\lambda_2+\delta} d\lambda \, F(\lambda)
\big[\big(f,(T-(\lambda+i\varepsilon) I_{\cH})^{-1}g\big)_{\cH}  \no \\
& \hspace*{4.9cm} - \big(f,(T-(\lambda-i\varepsilon)I_{\cH})^{-1}
g\big)_{\cH}\big]. \lb{2.26a}
\end{align}
\end{lemma}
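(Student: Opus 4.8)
The plan is to reduce everything to the complex measure $d(f,E_T(\lambda)g)_\cH$ furnished by the spectral theorem and then to carry out the two limits explicitly. By the spectral theorem the resolvent matrix elements admit the representation $(f,(T - z I_\cH)^{-1} g)_\cH = \int_\bbR d(f,E_T(\lambda)g)_\cH \, (\lambda - z)^{-1}$ for $z \in \bbC \bs \bbR$, and the complex measure $d(f, E_T(\cdot)g)_\cH$ has finite total variation (bounded by $\|f\|_\cH \|g\|_\cH$). First I would insert $z = \lambda \pm i\varepsilon$ into this representation and subtract, using the elementary identity $(t - \lambda - i\varepsilon)^{-1} - (t - \lambda + i\varepsilon)^{-1} = 2 i \varepsilon [(t-\lambda)^2 + \varepsilon^2]^{-1}$, so that the bracketed difference of resolvents in \eqref{2.26a}, after division by $2\pi i$, becomes the Poisson kernel $P_\varepsilon(t - \lambda) = \pi^{-1}\varepsilon[(t-\lambda)^2 + \varepsilon^2]^{-1}$ integrated against $d(f, E_T(t)g)_\cH$.

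Next, since $F$ is bounded on the compact $\lambda$-interval of integration and $P_\varepsilon \geq 0$ with $\int_\bbR P_\varepsilon(t - \lambda)\, d\lambda = 1$, Fubini's theorem applies and permits interchanging the $\lambda$-integration with the spectral integration. This recasts the right-hand side of \eqref{2.26a}, before the limits, as $\int_\bbR d(f, E_T(t)g)_\cH \, g_{\varepsilon,\delta}(t)$, where $g_{\varepsilon,\delta}(t) = \int_{\lambda_1+\delta}^{\lambda_2+\delta} d\lambda\, F(\lambda) P_\varepsilon(t-\lambda)$ is a Poisson-smoothed version of $F \chi_{(\lambda_1+\delta, \lambda_2+\delta]}$. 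The functions $g_{\varepsilon,\delta}$ are uniformly bounded by $\sup_{[\lambda_1, \lambda_2+1]}|F|$ for $\delta \in (0,1)$, and as $\varepsilon \downarrow 0$ the approximate-identity property of the Poisson kernel gives the pointwise limit $g_{\varepsilon,\delta}(t) \to F(t)\chi_{(\lambda_1+\delta, \lambda_2+\delta)}(t) + \tfrac{1}{2}F(t)[\chi_{\{\lambda_1+\delta\}}(t) + \chi_{\{\lambda_2+\delta\}}(t)]$ for every $t \in \bbR$, the factor $\tfrac12$ at the two endpoints reflecting the symmetry of $P_\varepsilon$. Dominated convergence with respect to the finite measure $|d(f, E_T(\cdot)g)_\cH|$ then yields the $\varepsilon \downarrow 0$ limit as a spectral integral; rewriting the endpoint half-contributions via the observation that this limiting integrand equals $\tfrac12\big[\chi_{(\lambda_1+\delta, \lambda_2+\delta)} + \chi_{[\lambda_1+\delta, \lambda_2+\delta]}\big]F$, the $\varepsilon \downarrow 0$ value becomes $\tfrac12\big[\int_{(\lambda_1+\delta, \lambda_2+\delta)} + \int_{[\lambda_1+\delta, \lambda_2+\delta]}\big] F\, d(f, E_T(\cdot)g)_\cH$.

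Finally I would let $\delta \downarrow 0$. The key observation is that both $\chi_{(\lambda_1+\delta, \lambda_2+\delta)}$ and $\chi_{[\lambda_1+\delta, \lambda_2+\delta]}$ converge pointwise to $\chi_{(\lambda_1, \lambda_2]}$ as $\delta \downarrow 0$ — this is precisely what the outer $\delta$-shift is designed to achieve, since it pushes the left endpoint $\lambda_1$ out of the window while keeping $\lambda_2$ inside — so a further application of dominated convergence collapses both terms to $\int_{(\lambda_1, \lambda_2]} F\, d(f, E_T(\lambda)g)_\cH$. By the spectral theorem this last integral equals $(f, F(T) E_T((\lambda_1, \lambda_2]) g)_\cH$, the left-hand side of \eqref{2.26a}. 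I expect the main technical obstacle to be the careful bookkeeping at the two moving endpoints $\lambda_1 + \delta$, $\lambda_2 + \delta$: one must track the half-weight produced by the Poisson kernel at atoms of $d(f, E_T(\cdot)g)_\cH$ located exactly at these points and verify that the symmetric-average rewriting makes the double limit land on the half-open interval $(\lambda_1, \lambda_2]$ rather than on $[\lambda_1, \lambda_2]$ or $(\lambda_1, \lambda_2)$; everything else reduces to routine dominated convergence once the uniform bound on $g_{\varepsilon,\delta}$ is in hand.
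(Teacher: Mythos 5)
Your proposal is correct, and there is in fact no internal proof to compare it against: the paper states Lemma \ref{l2.4a} as a known weak-sense version of Stone's formula, citing \cite[p.~1203]{DS88}, and uses it as a black box. Your argument is the canonical one behind that citation. The classical textbook proof (for $F\equiv 1$) evaluates the $\lambda$-integral of the resolvent difference explicitly in terms of $\arctan$ and then applies dominated convergence; your formulation replaces that explicit computation by the Poisson-kernel approximate-identity argument, which is exactly what is needed to accommodate a general $F\in C(\bbR)$, at the price of one application of Fubini (justified, as you note, by the finite total variation of $d(f,E_T(\cdot)g)_\cH$ and the boundedness of $F(\lambda)P_\varepsilon(t-\lambda)$ on the compact $\lambda$-window). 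Your bookkeeping is also right where it matters most: the half-weights $\tfrac12 F(t)$ produced at possible atoms of $d(f,E_T(\cdot)g)_\cH$ sitting at $\lambda_1+\delta$ and $\lambda_2+\delta$, the rewriting as the symmetric average of the open- and closed-interval integrals, and the observation that the subsequent $\delta\downarrow 0$ limit pushes $\lambda_1$ out of the window while keeping $\lambda_2$ inside, so that both indicators converge pointwise to $\chi_{(\lambda_1,\lambda_2]}$ and the double limit lands precisely on the half-open interval appearing in \eqref{2.26a}.
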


One can remove the compact support restrictions on $f$ and $g$
in Theorem \ref{t2.5} in the usual way by introducing the map
\begin{equation}
\widetilde U_{+,\alpha} : \begin{cases} C_0^\infty((x_0,\infty); \cH)\to
L^2(\bbR;d\rho_{+,\alpha}(\,\cdot\,,x_0);\cH) \\[1mm]
u \mapsto \hatt u_{+,\alpha}(\cdot)=
\int_{x_0}^\infty dx\, \phi_\alpha(\, \cdot \,,x,x_0)^* u(x). \end{cases} \lb{2.39}
\end{equation}
Taking $f=g$, $F=1$, $\lambda_1\downarrow -\infty$, and
$\lambda_2\uparrow \infty$ in \eqref{2.28} then shows that
$\widetilde U_{+,\alpha}$ is a densely defined isometry in
$L^2((x_0,\infty);dx;\cH)$, which extends by continuity to an isometry on
$L^2((x_0,\infty);dx;\cH)$. The latter is denoted by $U_{+,\alpha}$ and given by
\begin{equation}
U_{+,\alpha} : \begin{cases}L^2((x_0,\infty);dx;\cH)\to
L^2(\bbR;d\rho_{+,\alpha}(\,\cdot\,,x_0);\cH)    \\[1mm]
u \mapsto \hatt u_{+,\alpha}(\cdot)=
\slim_{b\uparrow\infty}\int_{x_0}^b dx\, \phi_\alpha(\, \cdot \,,x,x_0)^* u(x),
\end{cases}  \lb{2.40}
\end{equation}
where $\slim$ refers to the $L^2(\bbR;d\rho_{+,\alpha}(\,\cdot\,,x_0);\cH)$-limit.
In addition, one can show
that the map $U_{+,\alpha}$ in \eqref{2.40} is onto and hence that
$U_{+,\alpha}$ is unitary (i.e., $U_{+,\alpha}$ and
$U_{+,\alpha}^{-1}$ are isometric isomorphisms between the Hilbert spaces
$L^2((x_0,\infty);dx;\cH)$ and $L^2(\bbR;d\rho_{+,\alpha}(\,\cdot\,,x_0);\cH)$) with
\begin{equation}
U_{+,\alpha}^{-1} : \begin{cases} L^2(\bbR;d\rho_{+,\alpha};\cH) \to
L^2((x_0,\infty);dx;\cH)  \\[1mm]
\hatt u \mapsto \slim_{\mu_1\downarrow -\infty, \mu_2\uparrow\infty}
\int_{\mu_1}^{\mu_2}
\phi_\alpha(\lambda,\, \cdot \,,x_0) \, d\rho_{+,\alpha}(\lambda,x_0)\, \hatt u(\lambda).
\end{cases} \lb{2.45}
\end{equation}
Here $\slim$ refers to the $L^2((x_0,\infty); dx; \cH)$-limit.

We recall that the essential range of $F$ with respect to a scalar measure
$\mu$ is defined by
\begin{equation}
\essran_{\mu}(F)=\{z\in\bbC\,|\, \text{for all
$\varepsilon>0$,} \, \mu(\{\lambda\in\bbR \,|\,
|F(\lambda)-z|<\varepsilon\})>0\},  \lb{2.46c}
\end{equation}
and that $\essran_{\rho_{+,\alpha}}(F)$ for $F\in C(\bbR)$ is then defined to be
$\essran_{\nu_{+,\alpha}}(F)$ for any control measure $d\nu_{+,\alpha}$
of the operator-valued measure $d\rho_{+,\alpha}$. Given a complete orthonormal system
$\{e_n\}_{n \in \cI}$ in $\cH$ ($\cI \subseteq \bbN$ an appropriate index set), a convenient
control measure for $d\rho_{+,\alpha}$ is given by
\begin{equation}
\mu_{+,\alpha}(B)=\sum_{n\in\cI}2^{-n}(e_n, \rho_{+,\alpha}(B,x_0)e_n)_\cH, \quad
B\in\mathfrak{B}(\bbR).        \lb{2.46d}
\end{equation}

These considerations lead to a variant of the
spectral theorem for $H_{+,\alpha}$:

\begin{theorem} \lb{t2.6}
Assume Hypothesis \ref{h3.9} and suppose $F\in C(\bbR)$. Then,
\begin{equation}
U_{+,\alpha} F(H_{+,\alpha})U_{+,\alpha}^{-1} = M_F I_{\cH}    \lb{2.46}
\end{equation}
in $L^2(\bbR;d\rho_{+,\alpha}(\,\cdot\,,x_0);\cH)$ $($cf.\ \eqref{2.30}$)$. Moreover,
\begin{align}
& \sigma(F(H_{+,\alpha}))= \essran_{\rho_{+,\alpha}}(F), \lb{2.46a} \\
& \sigma(H_{+,\alpha})=\supp(d\rho_{+,\alpha}(\,\cdot\,,x_0)),  \lb{2.46b}
\end{align}
and the multiplicity of the spectrum of $H_{+,\alpha}$ is at most equal to $\dim (\cH)$.
\end{theorem}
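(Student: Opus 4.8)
The plan is to combine the weak spectral identity \eqref{2.28} of Theorem \ref{t2.5} with the unitarity of $U_{+,\alpha}$ recorded in \eqref{2.40}--\eqref{2.45}. I would first establish the diagonalization \eqref{2.46} for \emph{bounded} $F \in C(\bbR)$. In \eqref{2.28}, letting $\lambda_1 \downarrow -\infty$ and $\lambda_2 \uparrow +\infty$ replaces $E_{H_{+,\alpha}}((\lambda_1,\lambda_2])$ by the identity on the left (by strong convergence of the spectral family) and $M_{\chi_{(\lambda_1,\lambda_2]}}$ by the identity on the right, so that, using $U_{+,\alpha} u = \hatt u_{+,\alpha}$ for $u \in C_0^\infty((x_0,\infty);\cH)$ (cf.\ \eqref{2.39}),
\[
\big(f, F(H_{+,\alpha}) g\big)_{L^2((x_0,\infty);dx;\cH)}
= \big(U_{+,\alpha} f, M_F U_{+,\alpha} g\big)_{L^2(\bbR;d\rho_{+,\alpha}(\, \cdot \,,x_0);\cH)}
\]
for all $f, g \in C_0^\infty((x_0,\infty);\cH)$. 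For bounded $F$ both sides are bounded sesquilinear forms; since $C_0^\infty((x_0,\infty);\cH)$ is dense, the identity extends to all of $L^2((x_0,\infty);dx;\cH)$, and as $U_{+,\alpha}$ is unitary this is precisely $U_{+,\alpha} F(H_{+,\alpha}) U_{+,\alpha}^{-1} = M_F I_{\cH}$.

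To pass to arbitrary, possibly unbounded, $F \in C(\bbR)$ I would specialize the bounded case to the resolvent functions $F_z(\lambda) = (\lambda - z)^{-1}$, $z \in \bbC \backslash \bbR$, which gives $U_{+,\alpha}(H_{+,\alpha} - z I)^{-1} U_{+,\alpha}^{-1} = M_{F_z} I_{\cH}$. The right-hand side is the resolvent at $z$ of the maximally defined operator $M_\lambda I_{\cH}$ of multiplication by the independent variable, so uniqueness of the resolvent yields $U_{+,\alpha} H_{+,\alpha} U_{+,\alpha}^{-1} = M_\lambda I_{\cH}$. The full identity \eqref{2.46}, with domain $\dom(M_F)$ as in \eqref{2.30}, then follows from uniqueness of the Borel functional calculus.

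With the unitary equivalence in hand, \eqref{2.46a} is the standard determination of the spectrum of a multiplication operator, $\sigma(M_F I_{\cH}) = \essran_{\rho_{+,\alpha}}(F)$, the essential range being computed relative to a control measure $\mu_{+,\alpha}$ as in \eqref{2.46d}; specializing to $F(\lambda) = \lambda$ and noting that the essential range of this function is $\supp(d\rho_{+,\alpha}(\, \cdot \,,x_0))$ gives \eqref{2.46b}. For the multiplicity bound I would work in the model space $L^2(\bbR;d\rho_{+,\alpha}(\, \cdot \,,x_0);\cH)$ and observe that, relative to an orthonormal basis $\{e_n\}_{n \in \cI}$ of $\cH$, the associated constant functions $\lambda \mapsto e_n$ form a family of cardinality $\dim(\cH)$ that generates $L^2(\bbR;d\rho_{+,\alpha}(\, \cdot \,,x_0);\cH)$ under the bounded functional calculus of $M_\lambda I_{\cH}$, so that the maximal spectral multiplicity is at most $\dim(\cH)$; equivalently, each spectral fiber embeds into $\cH$. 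I expect this last point to be the main obstacle, since it rests on the fiber (direct-integral) structure of the operator-valued-measure model space $L^2(\bbR;d\rho_{+,\alpha};\cH)$ (cf.\ \cite{GKMT01}, \cite{GWZ13a}, \cite[App.~B]{GWZ13b}) rather than on a formal manipulation; by contrast, the diagonalization and the two spectral identities are essentially corollaries of Theorem \ref{t2.5} and the unitarity of $U_{+,\alpha}$.
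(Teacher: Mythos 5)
Your proposal follows essentially the same route as the paper: Theorem \ref{t2.6} is recalled there from \cite{GWZ13b} precisely as a consequence of the weak Stone-formula identity \eqref{2.28} of Theorem \ref{t2.5} together with the unitarity of $U_{+,\alpha}$ in \eqref{2.40}--\eqref{2.45}, which is exactly the skeleton you flesh out (bounded $F$ first, then resolvents, then the general functional calculus, then the spectra as essential ranges relative to the control measure \eqref{2.46d}).

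One slip in the multiplicity step, however: the constant functions $\lambda \mapsto e_n$ are in general \emph{not} elements of $L^2(\bbR;d\rho_{+,\alpha}(\,\cdot\,,x_0);\cH)$, because $d\rho_{+,\alpha}(\,\cdot\,,x_0)$ is typically an infinite measure, controlled only through the weight in \eqref{2.26} (for half-line Schr\"odinger operators $\rho_{+,\alpha}$ grows polynomially at $+\infty$). So the family you propose as generating does not even lie in the space. The repair is immediate: take instead $f_n(\lambda)=(\lambda-i)^{-1}e_n$, $n \in \cI$, which belong to the model space precisely by \eqref{2.26}; applying bounded continuous functions of $M_\lambda I_{\cH}$ to these yields the set $\big\{g(\lambda)(\lambda-i)^{-1}e_n \,\big|\, g\in C_b(\bbR),\, n\in\cI\big\}$, whose totality in $L^2(\bbR;d\rho_{+,\alpha}(\,\cdot\,,x_0);\cH)$ follows from the density results for these model spaces (\cite{GWZ13a}, \cite[App.~B]{GWZ13b}). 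With this replacement your argument --- equivalently, your fiber formulation that each spectral fiber embeds into $\cH$ --- does give the bound $\dim(\cH)$ on the multiplicity, and you correctly identify that this structural fact about the model space is the only genuinely nontrivial input.
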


\section{Weyl--Titchmarsh and Spectral Theory of Schr\"odinger Operators
with Operator-Valued Potentials on the Real Line} \lb{s4}

In this section we briefly recall the basic spectral theory for full-line Schr\"odinger operators $H$ in $L^2(\bbR; dx; \cH)$, employing a $2 \times 2$ block
operator representation of the associated Weyl--Titchmarsh matrix and its
$\cB\big(\cH^2\big)$-valued spectral measure, decomposing $\bbR$ into a left and right half-line with reference point $x_0 \in \bbR$,
$(-\infty, x_0] \cup [x_0, \infty)$.

We make the following basic assumption throughout this section.

\begin{hypothesis} \lb{h2.8}
$(i)$ Assume that
\begin{equation}
V\in L^1_{\loc} (\bbR;dx;\cH), \quad V(x)=V(x)^* \, \text{ for a.e. } x\in\bbR
\lb{2.51}
\end{equation}
$(ii)$ Introducing the differential expression $\tau$ given by
\begin{equation}
\tau=-\f{d^2}{dx^2} I_{\cH} + V(x), \quad x\in\bbR, \lb{2.52}
\end{equation}
we assume $\tau$ to be in the limit-point case at $+\infty$ and at $-\infty$.
\end{hypothesis}

Associated with the differential expression $\tau$ one introduces the self-adjoint Schr\"odinger operator $H$ in $L^2(\bbR;dx;\cH)$ by
\begin{align}
&Hf=\tau f,   \lb{2.53}
\\ \no
&f\in \dom(H)= \big\{g\in L^2(\bbR;dx;\cH) \, \big| \, g, g' \in
W^{2,1}_{\loc}(\bbR;dx;\cH); \, \tau g\in L^2(\bbR;dx;\cH)\big\}.
\end{align}

As in the half-line context we introduce the $\cB(\cH)$-valued fundamental
system of solutions $\phi_\alpha(z,\, \cdot \,,x_0)$ and
$\theta_\alpha(z,\, \cdot \,,x_0)$, $z\in\bbC$, of
\begin{equation}
(\tau \psi)(z,x) = z \psi(z,x), \quad x\in \bbR, \lb{2.54}
\end{equation}
with respect to a fixed reference point $x_0\in\bbR$, satisfying the
initial conditions at the point $x=x_0$,
\begin{align}
\begin{split}
\phi_\alpha(z,x_0,x_0)&=-\theta'_\alpha(z,x_0,x_0)=-\sin(\alpha), \\
\phi'_\alpha(z,x_0,x_0)&=\theta_\alpha(z,x_0,x_0)=\cos(\alpha), \quad
\alpha=\alpha^*\in\cB(\cH). \lb{2.55}
\end{split}
\end{align}
Again we note that by Corollary \ref{c2.5}\,$(iii)$, for any fixed $x, x_0\in\bbR$, the functions $\theta_{\alpha}(z,x,x_0)$, $\phi_{\alpha}(z,x,x_0)$,
$\theta_{\alpha}(\ol{z},x,x_0)^*$, and $\phi_{\alpha}(\ol{z},x,x_0)^*$ as well as their strong $x$-derivatives are entire with respect to $z$ in the $\cB(\cH)$-norm. Moreover, by \eqref{2.7i},
\begin{equation}
W(\theta_\alpha(\ol{z},\, \cdot \,,x_0)^*,\phi_\alpha(z,\, \cdot \,,x_0))(x)=I_\cH, \quad
z\in\bbC.  \lb{2.56}
\end{equation}

Particularly important solutions of \eqref{2.54} are the
{\it Weyl--Titchmarsh solutions} $\psi_{\pm,\alpha}(z,\, \cdot \,,x_0)$,
$z\in\bbC\backslash\bbR$, uniquely characterized by
\begin{align}
\begin{split}
&\psi_{\pm,\alpha}(z,\, \cdot \,,x_0)h \in L^2((x_0,\pm\infty);dx;\cH), \quad h\in\cH,
\\
&\sin(\alpha)\psi'_{\pm,\alpha}(z,x_0,x_0)
+\cos(\alpha)\psi_{\pm,\alpha}(z,x_0,x_0)=I_\cH, \quad
z\in\bbC\backslash\sigma(H_{\pm,\alpha}). \lb{2.57}
\end{split}
\end{align}
The crucial condition in \eqref{2.57} is again the $L^2$-property which
uniquely determines $\psi_{\pm,\alpha}(z,\, \cdot \,,x_0)$ up to constant
multiples by the limit-point hypothesis of $\tau$ at $\pm\infty$. In
particular, for
$\alpha = \alpha^*, \beta = \beta^* \in \cB(\cH)$,
\begin{align}
\psi_{\pm,\alpha}(z,\, \cdot \,,x_0) = \psi_{\pm,\beta}(z,\, \cdot \,,x_0)C_\pm(z,\alpha,\beta,x_0)
\lb{2.58}
\end{align}
for some coefficients $C_\pm (z,\alpha,\beta,x_0)\in\cB(\cH)$. The normalization in \eqref{2.57} shows that
$\psi_{\pm,\alpha}(z,\, \cdot \,,x_0)$ are of the type
\begin{equation}
\psi_{\pm,\alpha}(z,x,x_0)=\theta_{\alpha}(z,x,x_0)
+ \phi_{\alpha}(z,x,x_0) m_{\pm,\alpha}(z,x_0),
\quad z\in\bbC\backslash\sigma(H_{\pm,\alpha}), \; x\in\bbR, \lb{2.59}
\end{equation}
for some coefficients $m_{\pm,\alpha}(z,x_0)\in\cB(\cH)$, the
{\it Weyl--Titchmarsh $m$-functions} associated with $\tau$, $\alpha$,
and $x_0$. In addition, we note that
(with $z, z_1, z_2  \in \bbC\backslash\sigma(H_{\pm,\alpha})$)
\begin{align}
&W(\psi_{\pm,\al}(\ol{z_1},x_0,x_0)^*,\psi_{\pm,\al}(z_2,x_0,x_0)) = m_{\pm,\alpha}(z_2,x_0)- m_{\pm,\alpha}(z_1,x_0),    \lb{2.59a} \\
& \f{d}{dx}W(\psi_{\pm,\al}(\ol{z_1},x,x_0)^*,\psi_{\pm,\al}(z_2,x,x_0)) = (z_1-z_2)\psi_{\pm,\al}(\ol{z_1},x,x_0)^*\psi_{\pm,\al}(z_2,x,x_0),   \lb{2.59b} \\
& (z_2-z_1)\int_{x_0}^{\pm\infty} dx\, \psi_{\pm,\alpha}(\ol{z_{1}},x,x_0)^* 
\psi_{\pm,\alpha}(z_{2},x,x_0) = m_{\pm,\alpha}(z_2,x_0)- m_{\pm,\alpha}(z_1,x_0),   \lb{2.60} \\
& m_{\pm,\alpha}(z,x_0) = m_{\pm,\alpha}(\ol z,x_0)^*,   \lb{2.61} \\
& \Im[m_{\pm,\alpha}(z,x_0)] = 
\Im(z)\int_{x_0}^{\pm\infty} dx\, \psi_{\pm,\alpha}(z,x,x_0)^* \psi_{\pm,\alpha}(z,x,x_0).    \lb{2.62}
\end{align}
In particular, $\pm m_{\pm,\alpha}(\, \cdot \,,x_0)$ are operator-valued Nevanlinna--Herglotz functions.

In the following we abbreviate the Wronskian of $\psi_{+,\al}(\ol{z},x,x_0)^*$
and $\psi_{-,\al}(z,x,x_0)$ by $W(z)$ (thus, $W(z) = m_{-,\al}(z,x_0) - m_{+,\al}(z,x_0)$, 
$z\in\bbC\backslash\sigma(H)$). The Green's function $G(z,x,x')$ of the Schr\"odinger 
operator $H$ then reads
\begin{align}
G(z,x,x') = \psi_{\mp,\alpha}(z,x,x_0) W(z)^{-1} \psi_{\pm,\alpha}(\ol{z},x',x_0)^*,
\quad x \lesseqgtr x', \; z\in\bbC\backslash\sigma(H).
\lb{2.63}
\end{align}
Thus,
\begin{align}
\begin{split}
((H-zI_{L^2(\bbR; dx;\cH)})^{-1}f)(x)
=\int_{\bbR} dx' \, G(z,x,x')f(x'), \quad z\in\bbC\backslash\sigma(H),&   \\
x\in\bbR, \; f\in L^2(\bbR;dx;\cH).&    \lb{2.65}
\end{split}
\end{align}

Next, we introduce the $2\times 2$ block operator-valued Weyl--Titchmarsh
$m$-function,
$M_\alpha(z,x_0)\in\cB\big(\cH^2\big)$,
\begin{align}
M_\alpha(z,x_0)&=\big(M_{\alpha,j,j'}(z,x_0)\big)_{j,j'=0,1}, \quad
z\in\bbC\backslash\sigma(H),     \lb{2.71}
\\
M_{\alpha,0,0}(z,x_0) &= W(z)^{-1},
\\
M_{\alpha,0,1}(z,x_0) &= 2^{-1} W(z)^{-1} \big[m_{-,\alpha}(z,x_0)+m_{+,\alpha}(z,x_0)\big],
\\
M_{\alpha,1,0}(z,x_0) &= 2^{-1} \big[m_{-,\alpha}(z,x_0)+m_{+,\alpha}(z,x_0)\big] W(z)^{-1},
\\
M_{\alpha,1,1}(z,x_0) &= m_{+,\alpha}(z,x_0) W(z)^{-1} m_{-,\alpha}(z,x_0)
\no
\\
&= m_{-,\alpha}(z,x_0) W(z)^{-1} m_{+,\alpha}(z,x_0).   \lb{2.71a}
\end{align}
$M_\alpha(z,x_0)$ is a $\cB\big(\cH^2\big)$-valued Nevanlinna--Herglotz function with representation
\begin{equation}
M_\alpha(z,x_0)=C_\alpha(x_0)+\int_{\bbR}
d\Omega_\alpha (\lambda,x_0)\bigg[\frac{1}{\lambda -z}-\frac{\lambda}
{\lambda^2 + 1}\bigg], \quad z\in\bbC\backslash\sigma(H),      \lb{2.71b}
\end{equation}
where
\begin{equation}
C_\alpha(x_0)=\Re(M_\alpha(i,x_0)) \in \cB\big(\cH^2\big),    \lb{2.71c}
\end{equation}
and $d\Omega_{\alpha}(\, \cdot \,,x_0)$ is a $\cB\big(\cH^2\big)$-valued measure
satisfying
\begin{equation}
\int_{\bbR} \big(e,d\Omega_{\alpha}(\lambda,x_0)e\big)_{\cH^2} \, (\lambda^2 + 1)^{-1} < \infty,
\quad e\in\cH^2.    \lb{2.71d}
\end{equation}
In addition, the Stieltjes inversion formula for the nonnegative $\cB \big(\cH^2\big)$-valued measure $d\Omega_\alpha(\, \cdot \,,x_0)$ reads
\begin{equation}
\Omega_\alpha((\lambda_1,\lambda_2],x_0)
=\f1\pi \lim_{\delta\downarrow 0}
\lim_{\varepsilon\downarrow 0} \int^{\lambda_2+\delta}_{\lambda_1+\delta}
d\lambda \, \Im(M_\alpha(\lambda +i\varepsilon,x_0)), \quad \lambda_1,
\lambda_2 \in\bbR, \; \lambda_1<\lambda_2. \lb{2.71e}
\end{equation}
In particular, $d\Omega_\alpha(\, \cdot \,,x_0)$ is a $2\times 2$ block operator-valued measure
with $\cB(\cH)$-valued entries $d\Omega_{\al,\ell,\ell'}(\, \cdot \,,x_0)$, $\ell,\ell'=0,1$.

\medskip

Relating the family of spectral projections,
$\{E_H(\lambda)\}_{\lambda\in\bbR}$, of the self-adjoint
operator $H$ and the $2\times 2$ operator-valued increasing spectral
function $\Omega_{\alpha}(\lambda,x_0)$, $\lambda\in\bbR$, which
generates the $\cB\big(\cH^2\big)$-valued measure $d\Omega_\alpha(\, \cdot \,,x_0)$ in the Nevanlinna--Herglotz representation \eqref{2.71b} of $M_\alpha(z,x_0)$, one obtains the following result:

\begin{theorem} \lb{t2.9}
Let $\alpha = \alpha^* \in \cB(\cH)$, $f,g \in C^\infty_0(\bbR;\cH)$,
$F\in C(\bbR)$, $x_0\in\bbR$, and $\lambda_1, \lambda_2 \in\bbR$,
$\lambda_1<\lambda_2$. Then,
\begin{align} \lb{2.73}
&\big(f,F(H)E_H((\lambda_1,\lambda_2])g\big)_{L^2(\bbR;dx;\cH)}
\no \\
&\quad =
\big(\hatt
f_{\alpha}(\, \cdot \,,x_0),M_FM_{\chi_{(\lambda_1,\lambda_2]}} \hatt
g_{\alpha}(\, \cdot \,,x_0)\big)_{L^2(\bbR;d\Omega_{\alpha}(\, \cdot \,,x_0);\cH^2)}
\end{align}
where we introduced the notation
\begin{align} \lb{2.74}
&\hatt u_{\alpha,0}(\lambda,x_0) = \int_\bbR dx \,
\theta_\alpha(\lambda,x,x_0)^* u(x),  \quad
\hatt u_{\alpha,1}(\lambda,x_0) = \int_\bbR dx \,
\phi_\alpha(\lambda,x,x_0)^* u(x),
\no
\\
&\hatt u_{\alpha}(\lambda,x_0) = \big(\,\hatt u_{\alpha,0}(\lambda,x_0),
\hatt u_{\alpha,1}(\lambda,x_0)\big)^\top,  \quad
\lambda \in\bbR, \;  u \in C^\infty_0(\bbR;\cH),
\end{align}
and $M_G$ denotes the maximally defined operator of multiplication
by the function $G \in C(\bbR)$ in the Hilbert space
$L^2\big(\bbR;d\Omega_{\alpha}(\, \cdot \,,x_0);\cH^2\big)$,
\begin{align}
\begin{split}
& \big(M_G\hatt u\big)(\lambda)=G(\lambda)\hatt u(\lambda)
=\big(G(\lambda) \hatt u_0(\lambda), G(\lambda) \hatt u_1(\lambda)\big)^\top
\, \text{ for \ $\Omega_{\alpha}(\, \cdot \,,x_0)$-a.e.\ $\lambda\in\bbR$}, \lb{2.75} \\
& \, \hatt u \in \dom(M_G)=\big\{\hatt v \in
L^2\big(\bbR;d\Omega_{\alpha}(\, \cdot \,,x_0);\cH^2\big) \,\big|\,
G\hatt v \in L^2(\bbR;d\Omega_{\alpha}\big(\, \cdot \,,x_0);\cH^2\big)\big\}.
\end{split}
\end{align}
\end{theorem}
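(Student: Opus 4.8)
The plan is to follow the strategy behind Theorem~\ref{t2.5} in the full-line setting, i.e., to feed the weak form of Stone's formula (Lemma~\ref{l2.4a}) the explicit resolvent of $H$ and then extract the spectral measure $d\Omega_\alpha(\cdot,x_0)$ via Stieltjes inversion \eqref{2.71e}. Concretely, I would first fix $f,g\in C_0^\infty(\bbR;\cH)$ and $F\in C(\bbR)$ and apply Lemma~\ref{l2.4a} with $T=H$, so that
\begin{align*}
& \big(f,F(H)E_H((\lambda_1,\lambda_2])g\big)_{L^2(\bbR;dx;\cH)} \\
& \quad = \lim_{\delta\downarrow 0}\lim_{\varepsilon\downarrow 0}\f{1}{2\pi i}\int_{\lambda_1+\delta}^{\lambda_2+\delta} d\lambda\, F(\lambda)\big[\big(f,(H-(\lambda+i\varepsilon)I)^{-1}g\big) - \big(f,(H-(\lambda-i\varepsilon)I)^{-1}g\big)\big].
\end{align*}
Next I would insert the resolvent \eqref{2.65} with the explicit Green's function \eqref{2.63}, substitute the representation $\psi_{\pm,\alpha}=\theta_\alpha+\phi_\alpha m_{\pm,\alpha}$ from \eqref{2.59}, and use $m_{\pm,\alpha}(\ol z,x_0)^*=m_{\pm,\alpha}(z,x_0)$ from \eqref{2.61} to handle the adjoint factor.

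The algebraic heart is then to reorganize the direction-dependent Green's function into the \emph{symmetric} Weyl--Titchmarsh block matrix $M_\alpha$. Writing $\Theta_0=\theta_\alpha$, $\Theta_1=\phi_\alpha$ and expanding $G$ separately for $x<x'$ and $x>x'$ produces coefficient matrices $\ti M^{<}(z)$ and $\ti M^{>}(z)$; I expect to verify directly from \eqref{2.71}--\eqref{2.71a} and $m_{-,\alpha}-m_{+,\alpha}=W(z)$ that $M_\alpha=\f12(\ti M^{<}+\ti M^{>})$ while the discrepancy is the constant symplectic matrix $\ti M^{<}-\ti M^{>}=\left(\begin{smallmatrix}0&-I_\cH\\ I_\cH&0\end{smallmatrix}\right)$. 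This yields the clean splitting
\begin{align*}
G(z,x,x') &= \sum_{j,j'=0,1}\Theta_j(z,x,x_0)[M_\alpha(z,x_0)]_{j,j'}\Theta_{j'}(\ol z,x',x_0)^* \\
& \quad + \f12\,\sgn(x'-x)\big[\phi_\alpha(z,x,x_0)\theta_\alpha(\ol z,x',x_0)^* - \theta_\alpha(z,x,x_0)\phi_\alpha(\ol z,x',x_0)^*\big],
\end{align*}
in which the second (``free'') summand is built solely from the fundamental solutions $\theta_\alpha,\phi_\alpha$ and is therefore entire in $z$ by Corollary~\ref{c2.5}\,$(iii)$.

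With this splitting in hand, the free summand cancels identically in the bracketed difference of Stone's formula and contributes nothing. For the first summand I would carry out the $x$- and $x'$-integrations (Fubini being justified by the compact support of $f,g$ and continuity of the solutions), which, with the notation \eqref{2.74}, produces $\big(\hatt f_\alpha(\cdot,x_0),M_\alpha(\lambda\pm i\varepsilon,x_0)\hatt g_\alpha(\cdot,x_0)\big)_{\cH^2}$ up to terms continuous across $\bbR$ (arising from the $z$-dependence of the transforms themselves) that do not affect the jump as $\varepsilon\downarrow 0$. Since $M_\alpha(\cdot,x_0)$ is Nevanlinna--Herglotz, $M_\alpha(\ol z,x_0)=M_\alpha(z,x_0)^*$, so the bracket collapses to $\f1\pi\big(\hatt f_\alpha,\Im(M_\alpha(\lambda+i\varepsilon,x_0))\hatt g_\alpha\big)_{\cH^2}$; the Stieltjes inversion formula \eqref{2.71e} then converts the $\delta,\varepsilon$-limit of the $\lambda$-integral into integration against $d\Omega_\alpha(\cdot,x_0)$, while the surviving factor $F(\lambda)$ and the cutoff assemble into $M_FM_{\chi_{(\lambda_1,\lambda_2]}}$, giving exactly \eqref{2.73}.

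The main obstacle is twofold. The first part is the symmetrization identity $\ti M^{<}-\ti M^{>}=\left(\begin{smallmatrix}0&-I_\cH\\ I_\cH&0\end{smallmatrix}\right)$: this is the nontrivial structural fact that forces the non-symmetric, direction-dependent ($x<x'$ versus $x>x'$) kernel to reassemble into the symmetric matrix $M_\alpha$ plus an entire remainder, and it is exactly what makes the two half-line $m$-functions $m_{\pm,\alpha}$ combine into the full-line object. The second, more technical part is the analytic justification that only the jump of $M_\alpha$ survives the $\varepsilon\downarrow 0$ limit, namely the interchange of the spatial integrals with the $\delta,\varepsilon$-limits and the vanishing of the ``cross'' contributions in which the transforms (rather than $M_\alpha$) carry the $z$-dependence. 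This relies on uniform local control of $M_\alpha(\lambda+i\varepsilon,x_0)$ as $\varepsilon\downarrow 0$ furnished by its Nevanlinna--Herglotz representation \eqref{2.71b}--\eqref{2.71d} together with a dominated-convergence argument, precisely as in the half-line proof in \cite{GWZ13b}.
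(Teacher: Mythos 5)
Your proposal is correct: the symmetrization identity at its algebraic heart checks out exactly, including operator ordering (the $(0,1)$ and $(1,0)$ differences give $W^{-1}(m_{+,\alpha}-m_{-,\alpha})=-I_\cH$ and $(m_{-,\alpha}-m_{+,\alpha})W^{-1}=I_\cH$, while the $(1,1)$ entries agree by $m_{-,\alpha}W^{-1}m_{+,\alpha}=m_{+,\alpha}W^{-1}m_{-,\alpha}$ as in \eqref{2.71a}), and your route — weak Stone formula (Lemma \ref{l2.4a}), insertion of the Green's function \eqref{2.63}, splitting off an entire remainder, and Stieltjes inversion \eqref{2.71e} — is essentially the paper's approach. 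The paper itself states Theorem \ref{t2.9} without proof, deferring to \cite{GWZ13b}, but that is exactly the Stone's-formula argument it attributes to that reference for the half-line analog Theorem \ref{t2.5}, and it is the same strategy carried out in detail in the proof of Theorem \ref{t6.2} (where the terms built solely from the entire solutions $\theta_\alpha,\phi_\alpha$ cancel in the $\varepsilon\downarrow 0$ limit), so your plan matches the paper's method rather than offering a genuinely different one.
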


As in the half-line case, one can remove the compact support restrictions
on $f$ and $g$ in the usual way by considering the map
\begin{align}
&\widetilde U_{\alpha}(x_0) : \begin{cases} C_0^\infty(\bbR)\to
L^2\big(\bbR;d\Omega_{\alpha}(\, \cdot \,,x_0);\cH^2\big)
\\[1mm]
u \mapsto \hatt u_{\alpha}(\, \cdot \,,x_0)
=\big(\,\hatt u_{\alpha,0}(\lambda,x_0),
\hatt u_{\alpha,1}(\lambda,x_0)\big)^\top, \end{cases} \lb{2.95}
\\
&  \hatt u_{\alpha,0}(\lambda,x_0)=\int_\bbR dx \,
\theta_\alpha(\lambda,x,x_0)^* u(x), \quad \hatt u_{\alpha,1}(\lambda,x_0)=\int_\bbR dx \, \phi_\alpha(\lambda,x,x_0)^* u(x).  \no
\end{align}
Taking $f=g$, $F=1$, $\lambda_1\downarrow -\infty$, and
$\lambda_2\uparrow \infty$ in \eqref{2.73} then shows that $\widetilde
U_{\alpha}(x_0)$ is a densely defined isometry in $L^2(\bbR;dx;\cH)$,
which extends by continuity to an isometry on $L^2(\bbR;dx;\cH)$. The latter
is denoted by $U_{\alpha}(x_0)$ and given by
\begin{align}
&U_{\alpha}(x_0) : \begin{cases} L^2 (\bbR;dx;\cH)\to
L^2\big(\bbR;d\Omega_{\alpha}(\, \cdot \,,x_0);\cH^2\big) \\[1mm]
u \mapsto \hatt u_{\alpha}(\, \cdot \,,x_0)
= \big(\,\hatt u_{\alpha,0}(\, \cdot \,,x_0),
\hatt u_{\alpha,1}(\, \cdot \,,x_0)\big)^\top, \end{cases} \lb{2.96} \\
& \hatt u_\alpha(\, \cdot \,,x_0)=\begin{pmatrix}
\hatt u_{\alpha,0}(\, \cdot \,,x_0) \\
\hatt u_{\alpha,1}(\, \cdot \,,x_0) \end{pmatrix}=
\slim_{a\downarrow -\infty, b \uparrow\infty} \begin{pmatrix}
\int_{a}^b dx \, \theta_\alpha(\,\cdot \,,x,x_0)^* u(x) \\
\int_{a}^b dx \, \phi_\alpha(\, \cdot \,,x,x_0)^* u(x) \end{pmatrix}, \no
\end{align}
where $\slim$ refers to the
$L^2\big(\bbR;d\Omega_{\alpha}(\, \cdot \,,x_0);\cH^2\big)$-limit.

In addition, one can show
that the map $U_{\alpha}(x_0)$ in \eqref{2.96} is onto and hence that
$U_{\alpha}(x_0)$ is unitary with
\begin{align}
&U_{\alpha}(x_0)^{-1} : \begin{cases}
L^2\big(\bbR;d\Omega_{\alpha}(\, \cdot \,,x_0);\cH^2\big) \to  L^2(\bbR;dx;\cH)  \\[1mm]
\hatt u \mapsto u_\alpha, \end{cases} \lb{2.101} \\
& u_\alpha(\cdot)= \slim_{\mu_1\downarrow -\infty, \mu_2\uparrow
\infty} \int_{\mu_1}^{\mu_2}(\theta_\alpha(\lambda,\, \cdot \,,x_0),
\phi_\alpha(\lambda,\, \cdot \,,x_0)) \, d\Omega_{\alpha}(\lambda,x_0)\,
\hatt u(\lambda). \no
\end{align}
Here $\slim$ refers to the $L^2(\bbR;dx;\cH)$-limit.

Again, these considerations lead to a variant of the spectral theorem for $H$:

\begin{theorem} \lb{t2.10}
Let $F\in C(\bbR)$ and $x_0\in\bbR$. Then,
\begin{equation}
U_{\alpha}(x_0) F(H)U_{\alpha}(x_0)^{-1} = M_F \lb{2.102}
\end{equation}
in $L^2\big(\bbR;d\Omega_{\alpha}(\, \cdot \,,x_0);\cH^2\big)$ $($cf.\ \eqref{2.75}$)$.
Moreover,
\begin{align}
& \sigma(F(H))= \essran_{\Omega_{\alpha}}(F),  \lb{2.103a} \\
& \sigma(H)=\supp (d\Omega_\alpha(\, \cdot \,,x_0)),    \lb{2.103b}
\end{align}
and the multiplicity of the spectrum of $H$ is at most equal to $2\dim(\cH)$.
\end{theorem}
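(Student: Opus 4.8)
The plan is to derive Theorem~\ref{t2.10} directly from the measure formula in Theorem~\ref{t2.9} together with the unitarity of $U_{\alpha}(x_0)$ recorded in \eqref{2.96}--\eqref{2.101}, following the same strategy that establishes the half-line analog in Theorem~\ref{t2.6}. First I would prove \eqref{2.102} for bounded $F \in C(\bbR)$. Letting $\lambda_1 \downarrow -\infty$ and $\lambda_2 \uparrow \infty$ in \eqref{2.73} yields, for $f, g \in C^\infty_0(\bbR; \cH)$,
\[
\big(f, F(H) g\big)_{L^2(\bbR; dx; \cH)}
= \big(U_{\alpha}(x_0) f, M_F \, U_{\alpha}(x_0) g\big)_{L^2(\bbR; d\Omega_{\alpha}(\, \cdot \,,x_0); \cH^2)},
\]
since $\hatt f_{\alpha}(\, \cdot \,, x_0) = U_{\alpha}(x_0) f$ by \eqref{2.95}--\eqref{2.96}. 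Unitarity of $U_{\alpha}(x_0)$ rewrites the right-hand side as $\big(f, U_{\alpha}(x_0)^{-1} M_F U_{\alpha}(x_0) g\big)_{L^2(\bbR; dx; \cH)}$, and the density of $C^\infty_0(\bbR; \cH)$ in $L^2(\bbR; dx; \cH)$ then forces $F(H) = U_{\alpha}(x_0)^{-1} M_F U_{\alpha}(x_0)$ for bounded $F$, which is \eqref{2.102} in this case.

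Next I would extend \eqref{2.102} to all $F \in C(\bbR)$. The cleanest route is to specialize the previous step to $F \equiv 1$, so that \eqref{2.73} gives $U_{\alpha}(x_0) E_H((\lambda_1, \lambda_2]) U_{\alpha}(x_0)^{-1} = M_{\chi_{(\lambda_1,\lambda_2]}}$; since these projections agree on all half-open intervals, a standard measure-theoretic extension yields $U_{\alpha}(x_0) E_H(B) U_{\alpha}(x_0)^{-1} = M_{\chi_B}$ for every $B \in \mathfrak{B}(\bbR)$. Thus $U_{\alpha}(x_0)$ intertwines the spectral family of $H$ with that of the multiplication operator $M_{\lambda}$ in $L^2(\bbR; d\Omega_{\alpha}(\, \cdot \,, x_0); \cH^2)$, and \eqref{2.102} for arbitrary $F \in C(\bbR)$ --- with matching domains, whether or not $F$ is bounded --- follows from the uniqueness of the Borel functional calculus applied on both sides of this unitary equivalence.

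The spectral identities then follow from the unitary equivalence $F(H) \cong M_F$. I would use $\sigma(F(H)) = \sigma(M_F)$ together with the fact that the spectrum of a multiplication operator in the model space $L^2(\bbR; d\Omega_{\alpha}(\, \cdot \,, x_0); \cH^2)$ coincides with the essential range $\essran_{\Omega_{\alpha}}(F)$, computed relative to a scalar control measure for $d\Omega_{\alpha}(\, \cdot \,, x_0)$ built as in \eqref{2.46d} (now with $\cH^2$ in place of $\cH$); this gives \eqref{2.103a}, and the choice $F(\lambda) = \lambda$, for which $\essran_{\Omega_{\alpha}}(\, \cdot \,) = \supp(d\Omega_{\alpha}(\, \cdot \,, x_0))$, gives \eqref{2.103b}. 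Finally, the multiplicity bound is immediate from the representation of $H$ as multiplication by $\lambda$ on the vector-valued space $L^2(\bbR; d\Omega_{\alpha}(\, \cdot \,, x_0); \cH^2)$, whose fibers have dimension $\dim(\cH^2) = 2 \dim(\cH)$.

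The main obstacle I anticipate is not in \eqref{2.102} --- which is essentially a transcription of Theorem~\ref{t2.9} through the unitarity of $U_{\alpha}(x_0)$ --- but in the two structural claims about the model operator: that the spectrum of $M_F$ in the operator-valued-measure space equals $\essran_{\Omega_{\alpha}}(F)$, and that the spectral multiplicity is controlled by the fiber dimension $2 \dim(\cH)$. Both rest on the detailed theory of the model Hilbert space $L^2(\bbR; d\Sigma; \cK)$ associated with a $\cB(\cK)$-valued measure $\Sigma$, in particular on the reduction to a scalar control measure; I would invoke the corresponding results of \cite{GKMT01}, \cite{GWZ13a}, and \cite[App.~B]{GWZ13b}, which also underlie the half-line statements \eqref{2.46a}--\eqref{2.46b} in Theorem~\ref{t2.6}.
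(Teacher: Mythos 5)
Your proposal is correct and takes essentially the approach the paper intends: Theorem \ref{t2.10} is stated in this paper without proof (it is recalled from \cite{GWZ13}, \cite{GWZ13b}), and the derivation implicit in the paper's presentation --- combining Theorem \ref{t2.9} with the unitarity of $U_{\alpha}(x_0)$ from \eqref{2.96}--\eqref{2.101} and the model-space theory cited from \cite{GKMT01}, \cite{GWZ13a}, \cite[App.~B]{GWZ13b} --- is exactly what you assemble, in parallel with the half-line Theorem \ref{t2.6}. Your bounded-$F$ step, the bootstrap to unbounded $F$ via the spectral projections $U_{\alpha}(x_0) E_H(B) U_{\alpha}(x_0)^{-1} = M_{\chi_B}$, and the reduction of \eqref{2.103a}, \eqref{2.103b} and the multiplicity bound to control-measure facts about $L^2\big(\bbR; d\Omega_{\alpha}(\, \cdot \,,x_0); \cH^2\big)$ are all sound.
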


\section{Some Facts on Deficiency Subspaces and Abstract \\ Donoghue-type
$m$-Functions} \lb{s5}

Throughout this preparatory section we make the following assumptions:

\begin{hypothesis} \lb{h5.1}
Let $\cK$ be a separable, complex Hilbert space, and $\dot A$ a densely defined, closed, 
symmetric operator in $\cK$, with equal deficiency indices $(k,k)$, $k\in\bbN\cup\{\infty\}$. 
\end{hypothesis}

Self-adjoint extensions of $\dot A$ in $\cK$ will be denoted by $A$ $($or by $A_{\alpha}$, 
with $\alpha$ an appropriate operator parameter\,$)$.

Given Hypothesis \ref{h5.1}, we will 
study properties of deficiency spaces of $\dot A$, and introduce
operator-valued Donoghue-type  $m$-functions corresponding to $A$, closely following
the treatment in \cite{GKMT01}. These results will be applied to Schr\"odinger operators
in the following section.

In the special case $k=1$, detailed investigation of this type were undertaken by Donoghue \cite{Do65}. The case $k\in \bbN$ was discussed in depth in \cite{GT00} (we also refer to
\cite{HKS98} for another comprehensive treatment of this subject).~Here we treat the general situation $k\in \bbN\cup\{\infty\}$, utilizing results in \cite{GKMT01}, \cite{GMT98}.

The deficiency subspaces $\cN_{z_0}$ of $\dot A$, $z_0 \in \bbC \backslash \bbR$, are given by
\begin{equation}
 \cN_{z_0} = \ker \big((\dot A)\high{^*} - z_0 I_{\cK}\big), \quad
\dim \, (\cN_{z_0})=k,      \lb{5.1}
\end{equation}
and for any self-adjoint extension $A$ of $\dot A$ in $\cK$, one has (see also 
\cite[p.~80--81]{Kr71})
\begin{equation}
(A - z_0 I_{\cK})(A - z I_{\cK})^{-1} \cN_{z_0} = \cN_{z}, \quad z, z_0 \in \bbC\backslash\bbR.
\lb{5.2}
\end{equation}

We also note the following result on deficiency spaces.

\begin{lemma} \lb{l5.2} Assume Hypothesis \ref{h5.1}. Suppose $z_0 \in \bbC \backslash \bbR$, 
$h \in \cK$, and that $A$ is a self-adjoint extension of $\dot A$. Assume that
\begin{equation}
\text{for all $z \in \bbC \backslash \bbR$, } \,
h \, \bot \, \big\{(A - z I_{\cK})^{-1} \ker \big((\dot A)\high{^*} - z_0 I_{\cK}\big)\big\}.    \lb{5.3a}
\end{equation}
Then,
\begin{equation}
\text{for all $z \in \bbC \backslash \bbR$, } \, h \, \bot \, \ker \big((\dot A)\high{^*} - z I_{\cK}\big).   \lb{5.4a}
\end{equation}
\end{lemma}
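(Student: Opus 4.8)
The plan is to reduce the desired orthogonality to the single relation \eqref{5.2} together with the first resolvent identity, and then to extract the one remaining fact, namely $h \bot \cN_{z_0}$, from the large-$z$ asymptotics of the resolvent of $A$. First I would rewrite \eqref{5.2} in the form $\cN_z = (A - z_0 I_{\cK})(A - z I_{\cK})^{-1} \cN_{z_0}$ and observe that, since $(A - z I_{\cK})^{-1}$ maps $\cK$ into $\dom(A)$, one may apply $(A - z_0 I_{\cK})$ and use the algebraic identity
\[
(A - z_0 I_{\cK})(A - z I_{\cK})^{-1} = I_{\cK} + (z - z_0)(A - z I_{\cK})^{-1} .
\]
Consequently every element of $\cN_z$ is of the form $g + (z - z_0)(A - z I_{\cK})^{-1} g$ for some $g \in \cN_{z_0} = \ker\big((\dot A)\high{^*} - z_0 I_{\cK}\big)$.

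Next, for such an element I would compute
\[
\big(h, g + (z - z_0)(A - z I_{\cK})^{-1} g\big)_{\cK} = (h, g)_{\cK} + (z - z_0)\big(h, (A - z I_{\cK})^{-1} g\big)_{\cK} .
\]
By the hypothesis \eqref{5.3a} the second term on the right vanishes for every $z \in \bbC \backslash \bbR$ and every $g \in \cN_{z_0}$, so the whole inner product reduces to $(h, g)_{\cK}$. Hence the conclusion \eqref{5.4a}, that $h \bot \cN_z$ for all $z \in \bbC \backslash \bbR$, will follow at once provided I can show $h \bot \cN_{z_0}$, that is, $(h, g)_{\cK} = 0$ for all $g \in \cN_{z_0}$.

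To obtain this last orthogonality I would again invoke \eqref{5.3a}: for a fixed $g \in \cN_{z_0}$ one has $\big(h, (A - z I_{\cK})^{-1} g\big)_{\cK} = 0$ for all $z \in \bbC \backslash \bbR$. Multiplying by $-z$ and restricting to the imaginary axis $z = it$, $t \to +\infty$, I would use the fact that, by the spectral theorem for the self-adjoint operator $A$, $-z(A - z I_{\cK})^{-1}$ converges strongly to $I_{\cK}$ along this path: in the spectral representation the multiplier $1/(1 - \lambda/z)$ tends to $1$ pointwise in $\lambda$ and is bounded by $1$ for $z = it$, so dominated convergence applies. Therefore $0 = -it\,\big(h, (A - it I_{\cK})^{-1} g\big)_{\cK} \to (h, g)_{\cK}$, which forces $(h, g)_{\cK} = 0$, as needed.

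The only point requiring genuine care—the main (and rather mild) obstacle—is the justification that every element of $\cN_z$ really arises as $(A - z_0 I_{\cK})(A - z I_{\cK})^{-1} g$ with $g \in \cN_{z_0}$; this is precisely the surjectivity contained in the set equality \eqref{5.2}, the map in fact being a bijection of $\cN_{z_0}$ onto $\cN_z$ because $A - z_0 I_{\cK}$ and $(A - z I_{\cK})^{-1}$ are both injective. All remaining manipulations are routine.
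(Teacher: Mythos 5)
Your proof is correct and follows essentially the same route as the paper's: both arguments first deduce $h \bot \cN_{z_0}$ from the hypothesis \eqref{5.3a} via the strong limit $\slim_{z \to i\infty} (-z)(A - z I_{\cK})^{-1} f_{z_0} = f_{z_0}$, and then combine this orthogonality with \eqref{5.3a} and the relation \eqref{5.2} to obtain \eqref{5.4a}. The only difference is presentational: you spell out the identity $(A - z_0 I_{\cK})(A - z I_{\cK})^{-1} = I_{\cK} + (z - z_0)(A - z I_{\cK})^{-1}$ and the dominated-convergence justification of the strong limit, both of which the paper leaves implicit.
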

\begin{proof}
Let $f_{z_0} \in \ker \big((\dot A)\high{^*} - z_0 I_{\cK}\big)$, then
$\slim_{z \to i \infty} (-z) (A - z I_{\cK})^{-1}f_{z_0} = f_{z_0}$ and hence $h \, \bot \, f_{z_0}$, that is,
$h \, \bot \, \ker \big((\dot A)\high{^*} - z_0 I_{\cK}\big)$. The latter fact together with \eqref{5.3a} imply
\eqref{5.4a} due to \eqref{5.2}.
\end{proof}

Next, given a self-adjoint extension $A$ of $\dot A$ in $\cK$ and a closed, 
linear subspace $\cN$ of $\cK$, $\cN\subseteq \cK$,
the Donoghue-type $m$-operator $M_{A,\cN}^{Do} (z) \in\cB(\cN)$ associated with the pair
 $(A,\cN)$  is defined by
\begin{align}
\begin{split}
M_{A,\cN}^{Do}(z)&=P_\cN (zA + I_\cK)(A - z I_{\cK})^{-1}
P_\cN\big\vert_\cN      \\
&=zI_\cN+(z^2+1)P_\cN(A - z I_{\cK})^{-1}
P_\cN\big\vert_\cN\,, \quad  z\in \bbC\backslash \bbR,     \lb{5.3}
\end{split}
\end{align}
with $I_\cN$ the identity operator in $\cN$ and $P_\cN$ the orthogonal projection in $\cK$
onto $\cN$. In our principal Section \ref{s6}, we will exclusively focus on the particular case 
$\cN = \cN_i = \dim\big((\dot A)\high{^*} - i I_{\cK}\big)$.

We turn to the Nevanlinna--Herglotz property of $M_{A,\cN}^{Do}(\cdot)$ next:

\begin{theorem} \lb{t5.3} 
Assume Hypothesis \ref{h5.1}. 
Let $A$ be a self-adjoint extension of  $\dot A$ with associated orthogonal family of spectral
projections $\{E_A(\lambda)\}_{\lambda\in \bbR}$, and $\cN$ a closed subspace of $\cK$.
Then the Donoghue-type $m$-operator $M_{A,\cN}^{Do}(z)$ is analytic for
$z\in \bbC\backslash\bbR$ and
\begin{align} 
& [\Im(z)]^{-1} \Im\big(M_{A,\cN}^{Do} (z)\big) \geq
2 \Big[\big(|z|^2 + 1\big) + \big[\big(|z|^2 -1\big)^2 + 4 (\Re(z))^2\big]^{1/2}\Big]^{-1} I_{\cN},    \no \\
& \hspace*{9.5cm}  z\in \bbC\backslash \bbR.    \lb{5.4} 
\end{align}
In particular,
\begin{equation}
\big[\Im\big(M_{A,\cN}^{Do} (z)\big)\big]^{-1} \in \cB(\cN), \quad
z\in \bbC\backslash \bbR,     \lb{5.4A}
\end{equation}
and $M_{A,\cN}^{Do}(\cdot)$ is a $\cB(\cN)$-valued Nevanlinna--Herglotz function that 
admits the following representation valid in the strong operator topology of $\cN$,
\begin{equation}
M_{A,\cN}^{Do}(z)=
\int_\bbR
d\Omega_{A,\cN}^{Do}(\lambda) \bigg[\f{1}{\lambda-z} -
\f{\lambda}{\lambda^2 + 1}\bigg], \quad z\in\bbC\backslash\bbR,    \lb{5.5}
\end{equation}
where $($see also \eqref{A.42A}--\eqref{A.42b}$)$
\begin{align}
&\Omega_{A,\cN}^{Do}(\lambda)=(\lambda^2 + 1) (P_\cN E_A(\lambda)P_\cN\big\vert_\cN),
\lb{5.6} \\
&\int_\bbR d\Omega_{A,\cN}^{Do}(\lambda) \, (1+\lambda^2)^{-1}=I_\cN,    \lb{5.7} \\
&\int_\bbR d(\xi,\Omega_{A,\cN}^{Do} (\lambda)\xi)_\cN=\infty \, \text{ for all } \,
 \xi\in \cN\backslash\{0\}.    \lb{5.8}
\end{align}
\end{theorem}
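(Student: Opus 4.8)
The plan is to reduce everything to the scalar quadratic forms $\xi \mapsto \big(\xi, M_{A,\cN}^{Do}(z)\xi\big)_\cN$, $\xi \in \cN$, via the spectral theorem for the self-adjoint extension $A$. Analyticity of $M_{A,\cN}^{Do}(\cdot)$ on $\bbC\bs\bbR$ is immediate from the second representation in \eqref{5.3}, together with analyticity of the resolvent $(A - zI_\cK)^{-1}$ on $\rho(A)\supseteq\bbC\bs\bbR$ and boundedness of $P_\cN$. Writing $A = \int_\bbR \lambda\,dE_A(\lambda)$, using $P_\cN\xi = \xi$ for $\xi\in\cN$ and setting $d\mu_\xi(\lambda) = d(\xi, E_A(\lambda)\xi)_\cK$ (a finite positive measure with $\int_\bbR d\mu_\xi = \|\xi\|_\cN^2$), one reads off from \eqref{5.3} that
\[
\big(\xi, M_{A,\cN}^{Do}(z)\xi\big)_\cN = z\,\|\xi\|_\cN^2 + (z^2+1)\int_\bbR \f{d\mu_\xi(\lambda)}{\lambda - z}, \quad z\in\bbC\bs\bbR.
\]

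Next I would invoke the elementary identity $\f{1+\lambda^2}{\lambda-z} = (\lambda + z) + \f{z^2+1}{\lambda-z}$, which, upon integration against $d\mu_\xi$ and cancellation of the $\int\lambda\,d\mu_\xi$ contributions, rewrites the previous display as
\[
\big(\xi, M_{A,\cN}^{Do}(z)\xi\big)_\cN = \int_\bbR\Big[\f{1}{\lambda-z} - \f{\lambda}{\lambda^2+1}\Big](1+\lambda^2)\,d\mu_\xi(\lambda).
\]
This is the scalar Nevanlinna--Herglotz representation attached to $\xi$; by polarization and Stieltjes inversion it identifies the operator measure as $d\Omega_{A,\cN}^{Do}(\lambda) = (1+\lambda^2)\,P_\cN\,dE_A(\lambda)\,P_\cN|_\cN$, which is \eqref{5.6}, while \eqref{5.7} follows at once from $\int_\bbR(1+\lambda^2)^{-1}d\Omega_{A,\cN}^{Do} = P_\cN\big(\int dE_A\big)P_\cN|_\cN = I_\cN$. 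The strong-operator-topology representation \eqref{5.5} then follows from the operator-valued Herglotz theory in Appendix \ref{sA}, once positivity of $\Im\big(M_{A,\cN}^{Do}\big)$ is secured.

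The crux---and the step I expect to be the main obstacle---is the quantitative bound \eqref{5.4}. Taking imaginary parts in the representation above and using $\Im\big((\lambda-z)^{-1}\big) = \Im(z)/|\lambda-z|^2$ while $\lambda/(\lambda^2+1)$ is real, I obtain
\[
[\Im(z)]^{-1}\,\Im\big(\xi, M_{A,\cN}^{Do}(z)\xi\big)_\cN = \int_\bbR \f{1+\lambda^2}{|\lambda-z|^2}\,d\mu_\xi(\lambda),
\]
valid with the same sign in both half-planes. It then remains to minimize $g(\lambda) = (1+\lambda^2)/|\lambda-z|^2$ over $\lambda\in\bbR$. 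Requiring $1+\lambda^2 \geq c\,|\lambda-z|^2$ for all $\lambda$ (a nonnegativity condition for a quadratic in $\lambda$ with positive leading coefficient when $c<1$) reduces, via the discriminant criterion, to $[\Im(z)]^2 c^2 - (|z|^2+1)c + 1 \geq 0$; the relevant (smaller) root is $c_* = 2\big[(|z|^2+1) + \big((|z|^2+1)^2 - 4(\Im z)^2\big)^{1/2}\big]^{-1}$, and the algebraic identity $(|z|^2+1)^2 - 4(\Im z)^2 = (|z|^2-1)^2 + 4(\Re z)^2$ turns this into precisely the right-hand side of \eqref{5.4}. Since $\int d\mu_\xi = \|\xi\|_\cN^2$, this yields \eqref{5.4}; as $c_*>0$, the bound forces $\Im\big(M_{A,\cN}^{Do}(z)\big)$ to be bounded below (above) by a positive (negative) multiple of $I_\cN$ for $\Im(z)>0$ ($<0$), hence boundedly invertible, giving \eqref{5.4A}.

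Finally, for \eqref{5.8} I would compute $\int_\bbR d\big(\xi,\Omega_{A,\cN}^{Do}(\lambda)\xi\big)_\cN = \int_\bbR(1+\lambda^2)\,d\mu_\xi(\lambda) = \|\xi\|_\cN^2 + \int_\bbR\lambda^2\,d\|E_A(\lambda)\xi\|_\cK^2$, which is finite exactly when $\xi\in\dom(A)$ and equals $+\infty$ otherwise. Thus \eqref{5.8} is equivalent to $\cN\cap\dom(A) = \{0\}$; in the case of interest $\cN = \cN_i$ this holds automatically, since $\xi\in\cN_i\cap\dom(A)$ would force $A\xi = (\dot A)^*\xi = i\xi$, which is impossible for the self-adjoint (hence real-spectrum) operator $A$ unless $\xi = 0$.
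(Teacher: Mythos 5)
Your proposal is correct, and its core coincides with the paper's (sketched) argument. For \eqref{5.4}, the paper invokes the operator identity \eqref{5.8a} together with the spectral theorem and the infimum computation \eqref{5.8b}; your formula $[\Im(z)]^{-1}\Im\big(\xi,M_{A,\cN}^{Do}(z)\xi\big)_{\cN}=\int_{\bbR}(1+\lambda^2)\,|\lambda-z|^{-2}\,d\mu_\xi(\lambda)$ is exactly the quadratic-form version of \eqref{5.8a}, and your discriminant argument reproduces \eqref{5.8b} (including the identity $(|z|^2+1)^2-4(\Im(z))^2=(|z|^2-1)^2+4(\Re(z))^2$), so \eqref{5.4} and \eqref{5.4A} come out identically. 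The representation \eqref{5.5}--\eqref{5.7} the paper simply imports from \cite{GKMT01}, \cite{GMT98}; your spectral-theorem derivation is the standard one, with the minor stylistic caveat that the ``cancellation of the $\int\lambda\,d\mu_\xi$ contributions'' should be phrased through the bounded combined integrand $(z\lambda+1)/(\lambda-z)=(1+\lambda^2)\big[(\lambda-z)^{-1}-\lambda(\lambda^2+1)^{-1}\big]$, since $\int_{\bbR}\lambda\,d\mu_\xi(\lambda)$ may diverge on its own. The one place you genuinely go beyond the paper is \eqref{5.8}: as you observe, $\int_{\bbR}d\big(\xi,\Omega_{A,\cN}^{Do}(\lambda)\xi\big)_{\cN}=\int_{\bbR}(1+\lambda^2)\,d\mu_\xi(\lambda)$ is finite precisely when $\xi\in\cN\cap\dom(A)$, so \eqref{5.8} is equivalent to $\cN\cap\dom(A)=\{0\}$ and is \emph{false} for an arbitrary closed subspace $\cN$ (e.g., one containing an eigenvector of $A$), which is how the theorem is literally stated; it does hold in the only case the paper actually uses, $\cN=\cN_i$, by your argument that $\xi\in\cN_i\cap\dom(A)$ forces $A\xi=\dot A^{*}\xi=i\xi$ and hence $\xi=0$. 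Making this hypothesis ($\cN\cap\dom(A)=\{0\}$, automatic for deficiency subspaces) explicit is a genuine refinement of the statement as printed.
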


We just note that inequality \eqref{5.4} follows from
\begin{align}
[\Im (z)]^{-1} \Im (M_{A,\cN}^{Do}(z))&=
P_\cN(I_\cK+A^2)^{1/2}\big((A-\Re (z) I_{\cK})^2+
(\Im (z))^2 I_{\cK}\big)^{-1} \no \\
&\quad \times (I_\cK+A^2)^{1/2}P_\cN\big\vert_\cN,
\quad \, z\in \bbC\backslash\bbR,
\lb{5.8a}
\end{align}
the spectral theorem applied to $(I_\cK+A^2)^{1/2}\big((A-\Re (z) I_{\cK})^2+
(\Im (z))^2 I_{\cK}\big)^{-1} (I_\cK+A^2)^{1/2}$, together with 
\begin{align}
& \inf_{\lambda \in \bbR} \bigg(\frac{\lambda^2 + 1}{(\lambda-\Re(z))^2 + (\Im(z))^2}\bigg) 
= \inf_{\lambda \in \bbR} \bigg(\bigg|\f{\lambda - i}{\lambda - z}\bigg|^2\bigg)   \no \\
& \quad = \f{2}{\big(|z|^2 + 1\big) + \Big[\big(|z|^2 - 1\big)^2 + 4 (\Re(z))^2\Big]^{1/2}}, \quad 
z \in \bbC\backslash\bbR.     \lb{5.8b}
\end{align}

Since 
\begin{align}
& \Big[\big(|z|^2 + 1\big) + \big[\big(|z|^2 - 1\big)^2 + 4 (\Re(z))^2\big]^{1/2}\Big] \Big/ 2     \no \\ 
& \quad \leq \Big[\big(|z|^2 + 1\big) + \big(|z|^2 - 1\big) + 2 |\Re(z)|\Big] \Big/ 2   \no \\
& \quad = \max (1,|z|^2)+|\Re(z)|, \quad z \in \bbC\backslash\bbR, 
\end{align}
the lower bound \eqref{5.4} improves the one for 
$[\Im(z)]^{-1} \Im\big(M_{A,\cN}^{Do} (z)\big)$ recorded in \cite{GKMT01} and \cite{GMT98} 
if $\Re(z) \neq 0$\footnote{We note that \cite{GKMT01} and \cite{GMT98} contain a 
typographical error in this context in the sense that $\Im(z)$ must be replaced by $[\Im(z)]^{-1}$ 
in (4.16) of \cite{GKMT01} and (40) of \cite{GMT98}.}. 

Operators of the type $M_{A,\cN}^{Do}(\cdot)$ and some of its variants have attracted 
considerable attention in the literature. The interested reader can find a variety of additional 
results, for instance, in \cite{AB09}, \cite{AP04}, \cite{BL07}, \cite{BM14}--\cite{BR15a}, 
\cite{BGW09}--\cite{Bu97}, \cite{DHMdS09}--\cite{DMT88}, \cite{GKMT01}--\cite{GT00}, 
\cite{HMM13}, \cite{KO77}, \cite{KO78}, \cite{LT77}, \cite{Ma92a}, \cite{Ma92b}, \cite{MN11}, \cite{MN12}, \cite{Ma04}, \cite{Mo09}, \cite{Pa13}, \cite{Po04}, \cite{Re98}, \cite{Ry07}, 
and the references therein. 
We also add that a model operator approach for the pair $(\dot A, A)$ on the basis of the
operator-valued measure $\Omega_{A, \cN_{i}}$ has been developed in detail in \cite{GKMT01}.

In addition, we mention the following well-known fact (cf., e.g., \cite[Lemma~4.5]{GKMT01}, 
\cite[p.~80--81]{Kr71}):

\begin{lemma} \lb{l5.4} 
Assume Hypothesis \ref{h5.1}. Then $\cK$ decomposes into the direct orthogonal sum
\begin{align}
& \cK=\cK_0\oplus \cK_0^\bot, \quad
 \ker \big((\dot A)\high{^*} - z I_{\cK}\big) \subset \cK_0, \quad
 z\in \bbC\backslash\bbR,   \lb{5.9} \\ 
& \cK_0^\bot = \bigcap_{z \in \bbC \backslash \bbR} 
\ker \big((\dot A)\high{^*} - z I_{\cK}\big)^\bot = 
\bigcap_{z \in \bbC \backslash \bbR} \ran \big(\dot A - z I_{\cK}\big),     \lb{5.9a} 
\end{align}
where $\cK_0$ and $ \cK_0^\bot$ are invariant subspaces for
all self-adjoint extensions $A$ of $\dot A$ in $\cK$, that is,
\begin{equation}
(A - z I_{\cK})^{-1}\cK_0\subseteq \cK_0 , \quad
(A - z I_{\cK})^{-1}\cK_0^\bot\subseteq \cK_0^\bot,
\quad z\in \bbC\backslash\bbR.     \lb{5.10}
\end{equation}
In addition,
\begin{equation}
\cK_0=\ol{\linspan \{(A - z I_{\cH})^{-1}u_+  \, \vert\,
u_+\in \cN_i, \, z \in \bbC\backslash \bbR \}}.     \lb{5.10a} 
\end{equation}
Moreover, all self-adjoint extensions  $\dot A$ coincide on $\cK_0^\bot$, that is, if
$A_\alpha$ denotes an arbitrary self-adjoint extension of $\dot A$, then
 \begin{equation}
A_\alpha=A_{0,\alpha} \oplus A_0^\bot \, \text{ in } \, \cK=\cK_0\oplus\cK_0^\bot,    \lb{5.11}
\end{equation}
where
\begin{equation}
A_0^\bot \text{ is independent of the chosen } A_{\alpha},      \lb{5.12}
\end{equation}
and $A_{0,\alpha}$ $($resp., $A_0^\bot$$)$ is self-adjoint in $\cK_0$ $($resp., $\cK_0^\bot$$)$.
\end{lemma}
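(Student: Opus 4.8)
The plan is to build everything from two ingredients: the closedness of $\ran(\dot A - zI_{\cK})$ for $z \in \bbC\backslash\bbR$, and the deficiency-space identity \eqref{5.2}. First I would record that for a symmetric operator one has $\|(\dot A - zI_{\cK})f\|_{\cK} \geq |\Im(z)|\,\|f\|_{\cK}$, so that $\dot A - zI_{\cK}$ is injective with closed range for $z \in \bbC\backslash\bbR$; combined with the general relation $\ran(T)^\bot = \ker(T^*)$ this gives $\ran(\dot A - zI_{\cK}) = \ker\big((\dot A)\high{^*} - \ol{z} I_{\cK}\big)^\bot$. Intersecting over $z \in \bbC\backslash\bbR$ and noting that $\ol{z}$ ranges over the same set yields the two stated descriptions of $\cK_0^\bot$ in \eqref{5.9a}, and taking orthogonal complements (each $\cN_z$ being closed) identifies $\cK_0 = \ol{\linspan\{\cN_z \,|\, z \in \bbC\backslash\bbR\}}$, which in particular contains every $\cN_z$, establishing the inclusion in \eqref{5.9}.

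Next I would prove \eqref{5.10a}. Writing $(A - iI_{\cK})(A - zI_{\cK})^{-1} = I_{\cK} + (z - i)(A - zI_{\cK})^{-1}$ and applying \eqref{5.2} with $z_0 = i$ shows $\cN_z = \{u_+ + (z-i)(A - zI_{\cK})^{-1}u_+ \,|\, u_+ \in \cN_i\}$. Since $\slim_{z\to i\infty}(-z)(A - zI_{\cK})^{-1}u_+ = u_+$ exactly as in the proof of Lemma \ref{l5.2}, each $u_+ \in \cN_i$ lies in the closed span $\cM$ of $\{(A - zI_{\cK})^{-1}u_+\}$; the displayed description of $\cN_z$ then gives $\cN_z \subseteq \cM$, and solving it for $(A - zI_{\cK})^{-1}u_+ = (z-i)^{-1}(v - u_+)$ with $v \in \cN_z \subseteq \cK_0$ gives $\cM \subseteq \cK_0$. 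Hence $\cM = \cK_0$, which is \eqref{5.10a}.

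With \eqref{5.10a} in hand, invariance of $\cK_0$ under every resolvent follows from the resolvent identity: $(A - wI_{\cK})^{-1}(A - zI_{\cK})^{-1}u_+ = (z - w)^{-1}\big[(A - zI_{\cK})^{-1} - (A - wI_{\cK})^{-1}\big]u_+$ is a combination of generators, so $(A - wI_{\cK})^{-1}$ maps the generating set into $\cK_0$ and, being bounded, maps $\cK_0$ into $\cK_0$. Applying this for both $w$ and $\ol{w}$ and using $\big[(A - wI_{\cK})^{-1}\big]^* = (A - \ol{w} I_{\cK})^{-1}$ shows that $\cK_0^\bot$ is invariant as well, giving \eqref{5.10}; thus both subspaces reduce every self-adjoint extension $A_\alpha$, so $A_\alpha = A_{0,\alpha}\oplus A_0^\bot$ with self-adjoint summands.

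Finally, for the independence statements \eqref{5.11}, \eqref{5.12}, I would use that for $h \in \cK_0^\bot = \bigcap_z \ran(\dot A - zI_{\cK})$ there is a unique $f_z \in \dom(\dot A)$ with $(\dot A - zI_{\cK})f_z = h$ (uniqueness from injectivity); since $A_\alpha \supseteq \dot A$ this forces $(A_\alpha - zI_{\cK})^{-1}h = f_z$, a vector determined by $\dot A$ and $h$ alone, and by the previous paragraph $f_z \in \cK_0^\bot$. Hence the resolvent of $A_0^\bot$ on $\cK_0^\bot$ is independent of the chosen extension, and since a self-adjoint operator is determined by its resolvent, $A_0^\bot$ is independent of $A_\alpha$. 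The only genuinely delicate point is the interplay in \eqref{5.10a} feeding the invariance argument: one must ensure that the limit placing $\cN_i$ inside $\cM$ and the identity \eqref{5.2} are compatible, i.e.\ that the resolvent algebra does not leak outside $\cK_0$; everything else is bookkeeping with the resolvent identity and orthogonality.
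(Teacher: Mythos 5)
Your proof cannot be matched against an argument in the paper, because the paper gives none: Lemma \ref{l5.4} is stated as a well-known fact with references to \cite[Lemma~4.5]{GKMT01} and \cite[p.~80--81]{Kr71}. Judged on its own, your self-contained derivation is correct, and it uses only material legitimately available at that point of the text. The lower bound $\|(\dot A - zI_{\cK})f\|_{\cK} \geq |\Im(z)|\,\|f\|_{\cK}$ plus closedness of $\dot A$ gives closed range, so $\ran(\dot A - zI_{\cK}) = \ker\big((\dot A)^* - \ol{z}I_{\cK}\big)^\bot$, which yields both descriptions of $\cK_0^\bot$ in \eqref{5.9a} and, after passing to complements, \eqref{5.9}. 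Equation \eqref{5.2} (itself quoted by the paper from Krein) together with the strong limit $\slim_{z\to i\infty}(-z)(A - zI_{\cK})^{-1}u_+ = u_+$ from the proof of Lemma \ref{l5.2} gives \eqref{5.10a}; the resolvent identity, boundedness of the resolvent, and the adjoint relation $[(A - wI_{\cK})^{-1}]^* = (A - \ol{w}I_{\cK})^{-1}$ give \eqref{5.10} and hence the reduction $A_\alpha = A_{0,\alpha}\oplus A_0^\bot$ with self-adjoint parts; and your observation that for $h \in \bigcap_{z}\ran(\dot A - zI_{\cK})$ the vector $(A_\alpha - zI_{\cK})^{-1}h$ is the unique $\dot A$-preimage of $h$, hence independent of $\alpha$, settles \eqref{5.11}--\eqref{5.12}. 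This last step is a particularly clean way to see the independence claim and is essentially Krein's original argument.

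Two small points deserve a one-line patch, though neither is a genuine gap. In showing $\cM \subseteq \cK_0$ you solve $(A - zI_{\cK})^{-1}u_+ = (z-i)^{-1}(v - u_+)$, which is vacuous at $z = i$; and in the invariance step the resolvent identity degenerates at $w = z$. Both exceptional points are recovered by remarking that $z \mapsto (A - zI_{\cK})^{-1}u_+$ is analytic on $\bbC\backslash\bbR$ and $\cK_0$ is closed, so membership of the resolvent vectors in $\cK_0$ at the excluded parameter value follows from membership at nearby values. With those remarks inserted, the argument is complete.
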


In this context we note that a densely defined closed symmetric operator $\dot A$
with deficiency indices $(k,k)$, $k\in \bbN\cup\{\infty\}$ is called 
{\it completely non-self-adjoint} (equivalently, {\it simple} or {\it prime}) in $\cK$ if $\cK_0^\bot=\{0\}$ 
in the decomposition \eqref{5.9} (cf.\ \cite[p.~80--81]{Kr71}). 

\begin{remark} \lb{r5.5} 
In addition to Hypothesis \ref{h5.1} assume that $\dot A$ is not completely 
non-self-adjoint in $\cK$. Then in addition to \eqref{5.9}, \eqref{5.11}, and \eqref{5.12} one obtains
\begin{equation}
\dot A={\dot A}_0\oplus A_0^\bot, \quad \cN_{i}=\cN_{0,i}\oplus \{ 0 \}     \lb{5.13}
\end{equation}
with respect to the decomposition $\cK=\cK_0 \oplus \cK_0^\bot$.
In particular, the part $A_0^\bot$ of $\dot A$ in $\cK_0^\bot$ is self-adjoint. Thus, if 
$A = A_0 \oplus A_0^\bot$ is a self-adjoint extension of $\dot A$ in $\cK$, then 
\begin{equation}
M_{A, \cN_i}^{Do}(z) = M_{A_0, \cN_{0,i}}^{Do}(z), \quad z\in \bbC\backslash \bbR.     \lb{5.14}
\end{equation}
This reduces the $A$-dependent spectral properties of the Donoghue-type operator 
$M_{A, \cN_i}^{Do}(\cdot)$ effectively to those of $A_0$. A different manner in which to express 
this fact would be to note that the subspace $\cK_0^\bot$ is ``not detectable'' by 
$M_{A, \cN_i}^{Do}(\cdot)$ (we refer to \cite{BNMW14}) for a systematic investigation of this 
circle of ideas, particularly, in the context of non-self-adjoint operators). \hfill $\diamond$
\end{remark}

We are particularly interested in the question under which conditions on $\dot A$, the spectral information for $A$ contained in its family of spectral projections 
$\{E_A(\lambda)\}_{\lambda \in \bbR}$ is already encoded in the $\cB(\cN_i)$-valued measure 
$\Omega_{A,\cN_i}^{Do}(\cdot)$. In this connection we now mention the following result, denoting 
by $C_b(\bbR)$ the space of scalar-valued bounded continuous functions on $\bbR$:

\begin{theorem} \lb{t5.6}
Let $A$ be a self-adjoint operator on a separable Hilbert space $\cK$ and
$\{E_A(\la)\}_{\la\in\bbR}$ the family of spectral projections associated with $A$.
Suppose that $\cN\subset \cK$ is a closed linear subspace such that
\begin{align} \lb{5.22}
\ol{\linspan \, \{g(A)v \,|\, g\in C_b(\bbR), \, v\in\cN\}} = \cK.
\end{align}
Let $P_\cN$ be the orthogonal projection in $\cK$ onto $\cN$. Then $A$ is
unitarily equivalent to the operator of multiplication by the independent
variable $\la$ in the space $L^2(\bbR;d\Sigma_A(\la);\cN)$. Here the
operator-valued measure $d\Sigma_A(\cdot)$ is given in terms of the Lebesgue--Stieltjes
measure defined by the nondecreasing uniformly bounded family 
$\Sigma_A(\cdot)=P_\cN E_A(\cdot)P_\cN\big\vert_\cN$.
\end{theorem}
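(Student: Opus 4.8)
The plan is to produce the unitary $U$ explicitly on the dense subspace singled out by the generating hypothesis \eqref{5.22} and then to verify the three standard ingredients of a spectral representation: simultaneous well-definedness and isometry, surjectivity, and intertwining with $A$. Throughout, the model Hilbert space $L^2(\bbR;d\Sigma_A(\la);\cN)$ is to be understood in the sense of the operator-valued measure constructions of \cite{GKMT01}, \cite{GWZ13a}, \cite[App.~B]{GWZ13b}. First I would define $U$ on the algebraic span $\cD := \linspan\{g(A)v \mid g\in C_b(\bbR),\, v\in\cN\}$ by $U(g(A)v) = g(\,\cdot\,)v$, extended linearly. The key computation is that for finite sums $f=\sum_j c_j\, g_j(A) v_j$ and $\ti f = \sum_k \ti c_k\, \ti g_k(A) \ti v_k$ the spectral theorem for $A$ gives
\begin{align*}
(f,\ti f)_\cK &= \sum_{j,k} \ol{c_j}\, \ti c_k \int_\bbR \ol{g_j(\la)}\, \ti g_k(\la)\, d\big(v_j, E_A(\la) \ti v_k\big)_\cK \\
&= \sum_{j,k} \ol{c_j}\, \ti c_k \int_\bbR \ol{g_j(\la)}\, \ti g_k(\la)\, \big(v_j, d\Sigma_A(\la) \ti v_k\big)_\cN,
\end{align*}
where the second line uses $P_\cN v_j = v_j$, $P_\cN \ti v_k = \ti v_k$ (since $v_j,\ti v_k\in\cN$) together with the defining identity $\Sigma_A(\,\cdot\,) = P_\cN E_A(\,\cdot\,) P_\cN\big\vert_\cN$. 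The right-hand side is precisely $(Uf, U\ti f)_{L^2(\bbR;d\Sigma_A;\cN)}$, so the same formula simultaneously shows that $U$ is well defined (if $f=0$ in $\cK$ then $\|Uf\|_{L^2}=\|f\|_\cK=0$) and isometric on $\cD$. Since $\ol{\cD}=\cK$ by \eqref{5.22}, $U$ extends uniquely to an isometry of $\cK$ into $L^2(\bbR;d\Sigma_A;\cN)$.

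Next I would establish surjectivity. The range of $U$ contains every $g(\,\cdot\,)v$ with $g\in C_b(\bbR)$, $v\in\cN$. Because $\Sigma_A(\bbR)=P_\cN E_A(\bbR) P_\cN\big\vert_\cN = I_\cN$, the scalar measures $(v,d\Sigma_A(\,\cdot\,)v)_\cN$ are finite, and $\cN$-valued simple functions are dense in $L^2(\bbR;d\Sigma_A;\cN)$ by the construction of that space; approximating the characteristic functions entering a simple function by continuous bounded scalar functions (in each finite control measure) then shows that the functions $g(\,\cdot\,)v$ span a dense subspace. Hence $\ran(U)$ is dense, and being the range of an isometry it is closed, so $U$ is onto and therefore unitary.

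For the intertwining property I would work through the resolvent rather than $A$ itself, which bypasses any difficulty with the possible unboundedness of $A$. Fix $z\in\bbC\backslash\bbR$ and note $g_z(\,\cdot\,):=(\,\cdot\,-z)^{-1}\in C_b(\bbR)$. For $f=g(A)v\in\cD$ one has $(A - z I_\cK)^{-1} f = (g_z g)(A) v \in \cD$, whence $U(A - z I_\cK)^{-1} f = g_z(\,\cdot\,)g(\,\cdot\,)v = M_{g_z} Uf$, with $M_{g_z}$ the operator of multiplication by $(\la - z)^{-1}$ in $L^2(\bbR;d\Sigma_A;\cN)$. By continuity this yields $U(A - z I_\cK)^{-1} U^{-1} = M_{(\la-z)^{-1}}$ for all $z\in\bbC\backslash\bbR$. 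Since the operator of multiplication by the independent variable $\la$ in $L^2(\bbR;d\Sigma_A;\cN)$ is exactly the self-adjoint operator whose resolvent at $z$ is $M_{(\la-z)^{-1}}$, this identity shows that $A$ is unitarily equivalent, via $U$, to multiplication by $\la$, as claimed.

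The main obstacle I anticipate is the surjectivity step, specifically the density of the functions $g(\,\cdot\,)v$ in the operator-valued space $L^2(\bbR;d\Sigma_A;\cN)$, since this hinges on the precise definition of that Hilbert space (density of $\cN$-valued simple functions and finiteness of the relevant control measures); the remaining steps are routine consequences of the spectral theorem for $A$ and the definition of $\Sigma_A$.
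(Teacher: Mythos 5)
Your proposal is correct, and its skeleton coincides with the paper's proof: you define the same map $U[g(A)v]=g(\cdot)v$ on the span singled out by \eqref{5.22}, establish isometry (and hence well-definedness) by the same spectral-theorem computation reducing $(f(A)u,g(A)v)_{\cK}$ to $\int_\bbR \ol{f(\la)}g(\la)\,d(u,\Sigma_A(\la)v)_\cN$, extend by continuity, and obtain surjectivity from the fact that $\ran(U)$ contains the set $\{g(\cdot)v \mid g\in C_b(\bbR),\,v\in\cN\}$, which is dense in $L^2(\bbR;d\Sigma_A(\la);\cN)$ (the paper simply cites \cite[Appendix~B]{GWZ13b} for this density, whereas you sketch it via simple functions; either is acceptable, and your observation that an isometry has closed range completes that step). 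The one place where you genuinely diverge is the intertwining property: the paper obtains $UAu=\la Uu$ on the generating set by ``applying the definition to the function $\la g(\la)$,'' which is slightly informal since $\la g(\la)$ need not lie in $C_b(\bbR)$ and $g(A)v$ need not lie in $\dom(A)$ without further comment; you instead conjugate the resolvent, using that $g_z(\la)=(\la-z)^{-1}$ and $g_z g$ are bounded continuous, to get $U(A-zI_\cK)^{-1}U^{-1}=M_{(\la-z)^{-1}}$ for all $z\in\bbC\backslash\bbR$ and then identify $A$ with multiplication by $\la$ through equality of resolvents. Your route costs one extra observation (that multiplication by $\la$ in the model space is the self-adjoint operator with resolvent $M_{(\la-z)^{-1}}$) but buys a treatment that is immune to the unboundedness of $A$, which is a genuine tightening of the step the paper treats most casually.
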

\begin{proof}
It suffices to construct a unitary transformation $U:\cK \to
L^2(\bbR;d\Sigma_A(\la);\cN)$ that
satisfies $U A u = \la U u$ for all $u\in\cK$.
First, define $U$ on the set of vectors $\cS=\{g(A)v \,|\, g\in
C_b(\bbR), \; v\in\cN\}\subset\cK$ by
\begin{align}
U[g(A)v]=g(\la)v, \quad g\in C_b(\bbR), \; v\in\cN,
\end{align}
and then extend $U$ by linearity to the span of these vectors, which
by assumption is a dense subset of $\cK$. Applying the above
definition to the function $\la g(\la)$ yields $U A u = \la U u$ for
all $u$ in $\cS$ and hence by linearity also for all $u$ in the dense
subset $\linspan(\cS)$. In addition, the following simple computation
utilizing the spectral theorem for the self-adjoint operator $A$ shows
that $U$ is an isometry on $\cS$ and hence by linearity also on
$\linspan(\cS)$,
\begin{align}
\big(f(A)u,g(A)v\big)_{\cK} &=
\big(u,f(A)^*g(A)v\big)_{\cK} = \big(u,P_\cN f(A)^*g(A)
P_\cN\big\vert_\cN v\big)_{\cN} \no\\
&= \int_{\bbR} \big(u,\ol{f(\la)}g(\la)d\Sigma_A(\la)v\big)_{\cN} \\
&= \big(f(\cdot)u,g(\cdot)v\big)_{L^2(\bbR;d\Sigma_A(\la);\cN)}, \quad
f,g\in C_b(\bbR),
\; u,v\in\cN.   \no
\end{align}
Thus, $U$ can be extended by continuity to the whole Hilbert space $\cK$. Since the
range of $U$ contains the set $\{g(\cdot)v \,|\, g\in C_b(\bbR), \, v\in\cN\}$ which is dense
in $L^2(\bbR;d\Sigma_A(\la);\cN)$ (cf.\ \cite[Appendix~B]{GWZ13b}), it follows that $U$ is a unitary
transformation.
\end{proof}

\begin{remark} \lb{r5.7}
Since $\{(\la-z)^{-1}\,|\, z\in\bbC\bs\bbR\}\subset C_b(\bbR)$, the condition \eqref{5.22}
in Theorem \ref{t5.6} can be replaced by the following stronger, and frequently encountered,
one,
\begin{align}
\ol{\linspan \, \{(A - z I_{\cK})^{-1}v \,|\, z\in \bbC\bs\bbR, \, v\in\cN\}} = \cK.    \lb{5.25} 
\end{align}
\hfill $\diamond$ 
\end{remark}

Combining  Lemma \ref{l5.4}, Remark \ref{r5.5}, Theorem \ref{t5.6}, and Remark \ref{r5.7} 
then yields the following fact:

\begin{corollary} \lb{c5.8}
Assume Hypothesis \ref{h5.1} and suppose that $A$ is a self-adjoint extension of $\dot A$. 
Let $M_{A, \cN_i}^{Do}(\cdot)$ be the 
Donoghue-type $m$-operator associated with the pair $(A, \cN_i)$, with 
$\cN_i = \ker\big((\dot A)\high{^*} - i I_{\cK}\big)$, and denote by $\Omega_{A,\cN_i}^{Do}(\cdot)$ 
the $\cB(\cN_i)$-valued measure in the Nevanlinna--Herglotz representation of 
$M_{A, \cN_i}^{Do}(\cdot)$ $($cf.\ \eqref{5.5}$)$. Then $A$ is unitarily equivalent to the 
operator of multiplication by the independent
variable $\la$ in the space $L^2(\bbR; (\lambda^2 + 1)^{-1}d\Omega_{A,\cN_i}^{Do}(\la);\cN_i)$, 
with $\Omega_{A,\cN_i}^{Do}(\lambda) = (\lambda^2 + 1) P_{\cN_i} E_A(\lambda) P_{\cN_i}\big\vert_{\cN_i}$, $\lambda \in \bbR$, if and only if $\dot A$ is completely non-self-adjoint in $\cK$.
\end{corollary}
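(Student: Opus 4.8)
The plan is to recognize that the measure appearing in the statement is exactly the one from Theorem \ref{t5.6}. Indeed, by \eqref{5.6} one has $(\lambda^2+1)^{-1} d\Omega_{A,\cN_i}^{Do}(\lambda) = d\Sigma_A(\lambda)$ with $\Sigma_A(\cdot) = P_{\cN_i} E_A(\cdot) P_{\cN_i}\big|_{\cN_i}$, so the target space is precisely $L^2(\bbR; d\Sigma_A(\lambda); \cN_i)$ and the claimed unitary equivalence is exactly the conclusion of Theorem \ref{t5.6} specialized to $\cN = \cN_i$. Thus I would reduce the whole statement to showing that the hypothesis \eqref{5.22} of Theorem \ref{t5.6} holds for $\cN = \cN_i$ if and only if $\dot A$ is completely non-self-adjoint.

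For this equivalence I would proceed as follows. By Remark \ref{r5.7}, condition \eqref{5.22} for $\cN_i$ is equivalent to the resolvent form \eqref{5.25}, namely $\ol{\linspan \{(A - z I_{\cK})^{-1} v \mid z \in \bbC\backslash\bbR, \, v \in \cN_i\}} = \cK$. But by \eqref{5.10a} in Lemma \ref{l5.4} the left-hand side is precisely $\cK_0$, so \eqref{5.22} holds for $\cN_i$ if and only if $\cK_0 = \cK$, i.e., $\cK_0^\bot = \{0\}$, which is the definition of complete non-self-adjointness of $\dot A$. This already yields the ``if'' direction: assuming $\dot A$ is completely non-self-adjoint, \eqref{5.22} holds for $\cN_i$ and Theorem \ref{t5.6} delivers the desired unitary equivalence.

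For the ``only if'' direction I would argue by contraposition. If $\dot A$ is not completely non-self-adjoint then $\cK_0^\bot \neq \{0\}$, and by Lemma \ref{l5.4} together with Remark \ref{r5.5} one has $A = A_0 \oplus A_0^\bot$ relative to $\cK = \cK_0 \oplus \cK_0^\bot$, with $\cN_i \subseteq \cK_0$ and $A_0^\bot$ self-adjoint on the nontrivial space $\cK_0^\bot$. Since $\cN_i \subseteq \cK_0$ and $\cK_0$ reduces $A$, the measure $\Sigma_A = P_{\cN_i} E_A P_{\cN_i}\big|_{\cN_i}$ coincides with $\Sigma_{A_0}$ and records only the spectral data of the completely non-self-adjoint part $A_0$; equivalently, by \eqref{5.14}, $M_{A,\cN_i}^{Do}(\cdot) = M_{A_0,\cN_{0,i}}^{Do}(\cdot)$. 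Consequently the canonical map of Theorem \ref{t5.6}, sending $g(A)v \mapsto g(\lambda) v$, is an isometry whose closed range is $\cK_0 \subsetneq \cK$; it realizes $A_0$, and not $A$, as multiplication by $\lambda$, so the summand $A_0^\bot$ is invisible to the $\Omega_{A,\cN_i}^{Do}$-model.

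The hard part will be the ``only if'' direction, and more precisely pinning down the sense of ``unitarily equivalent'' that makes the characterization sharp. I would stress that the natural identification is the canonical one of Theorem \ref{t5.6} (the map fixing $\cN_i$), which is onto $\cK$ exactly when \eqref{5.22} holds, i.e., exactly when $\dot A$ is completely non-self-adjoint; without this normalization one should be wary of a spurious abstract unitary equivalence arising through a coincidence of spectral multiplicities between $A$ and its part $A_0$ (for instance when $A_0$ already has uniform infinite multiplicity covering $\sigma(A_0^\bot)$). I would therefore phrase the converse as the assertion that the $\Omega_{A,\cN_i}^{Do}$-model fails to capture $A$ whenever $\cK_0^\bot \neq \{0\}$, using \eqref{5.14} and the reduction $\Sigma_A = \Sigma_{A_0}$ as the decisive ingredients.
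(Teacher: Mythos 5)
Your proposal is correct and follows essentially the same route as the paper's proof: the forward direction combines \eqref{5.10a}, \eqref{5.25} (via Remark \ref{r5.7}), and Theorem \ref{t5.6} exactly as the paper does, and the converse invokes \eqref{5.14} together with the decomposition of Lemma \ref{l5.4} and Remark \ref{r5.5} to conclude that $\Omega_{A,\cN_i}^{Do}(\cdot)$ cannot detect the nontrivial summand $A_0^\bot$. Your closing discussion of the precise sense of ``unitarily equivalent'' (the canonical identification of Theorem \ref{t5.6} rather than an abstract unitary) makes explicit a point the paper leaves implicit, but it is the same argument.
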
 
\begin{proof}
If $\dot A$ is completely non-self-adjoint in $\cK$, then $\cK_0 = \cK$, $\cK_0^{\bot} = \{0\}$ 
in \eqref{5.9}, together with \eqref{5.10a}, and \eqref{5.25} with 
$\cN = \cN_i$ yields  $\Sigma_A (\lambda) 
= (\lambda^2 + 1) P_{\cN_i} E_A(\lambda) P_{\cN_i}\big\vert_{\cN_i} 
= \Omega_{A,\cN_i}^{Do}(\lambda)$, $\lambda \in \bbR$, in Theorem \ref{t5.6}. 
Conversely, if $\dot A$ is not completely non-self-adjoint in $\cK$, then the fact \eqref{5.14} 
shows that $\Omega_{A,\cN_i}^{Do}(\cdot)$ cannot describe the nontrivial self-adjoint 
operator $A_0^{\bot}$ in $\cK_0^{\bot} \supsetneq \{0\}$. 
\end{proof}

In other words, $\dot A$ is completely non-self-adjoint in $\cK$, if and only if the entire 
spectral information on $A$ contained in its family of spectral 
projections $E_A(\cdot)$, is already encoded in the $\cB(\cN_i)$-valued measure 
$\Omega_{A,\cN_i}^{Do}(\cdot)$  (including multiplicity properties of the spectrum of $A$).

\section{Donoghue-type $m$-Functions for Schr\"odinger Operators with
Operator-Valued Potentials and Their Connections to Weyl--Titchmarsh
$m$-Functions} \lb{s6}

In our principal section we construct Donoghue-type $m$-functions for half-line and
full-line Schr\"odinger operators with operator-valued potentials and establish their
precise connection with the Weyl--Titchmarsh $m$-functions discussed in Sections
\ref{s3} and \ref{s4}.

To avoid overly lengthy expressions involving resolvent operators, we now simplify our
notation a bit and use the symbol $I$ to denote the identity operator in
$L^2((x_0, \pm \infty); dx;\cH)$ and $L^2(\bbR; dx; \cH)$.

The principal hypothesis for this section will be the following:

\begin{hypothesis} \lb{h6.1} ${}$ \\
$(i)$ For half-line Schr\"odinger operators on $[x_0,\infty)$ we assume Hypothesis \ref{h2.7}
with $a=x_0$, $b = \infty$ and assume $\tau = - (d^2/dx^2) I_{\cH} + V(x)$ to be in the limit-point
case at $\infty$. \\
$(ii)$ For half-line Schr\"odinger operators on $(-\infty,x_0]$ we assume Hypothesis \ref{h2.7}
with $a=-\infty$, $b = x_0$ and assume $\tau = - (d^2/dx^2) I_{\cH} + V(x)$ to be in the limit-point
case at $-\infty$. \\
$(iii)$ For Schr\"odinger operators on $\bbR$ we assume Hypothesis \ref{h2.8}.
\end{hypothesis}

\subsection{The half-line case:} 
We start with half-line Schr\"odinger operators $H_{\pm,\min}$ in $L^2((x_0,\pm\infty); dx; \cH)$
and note that for $\{e_j\}_{j \in \cJ}$ a given orthonormal basis in $\cH$ ($\cJ \subseteq \bbN$ an
appropriate index set), and $z \in \bbC \backslash \bbR$,
\begin{equation}
\{\psi_{\pm,\alpha}(z, \, \cdot \,,x_0) e_j\}_{j \in \cJ}      \lb{6.1}
\end{equation}
is a basis in the deficiency subspace $\cN_{\pm,z} = \ker\big(H_{\pm,\min}^* - z I\big)$.
In particular, given $f \in L^2((x_0,\pm\infty); dx; \cH)$, one has
\begin{equation}
f \bot \{\psi_{\pm,\alpha}(z, \, \cdot \,,x_0) e_j\}_{j \in \cJ},     \lb{6.2}
\end{equation}
if and only if
\begin{align}
\begin{split}
0 &= (\psi_{\pm,\alpha}(z, \, \cdot \,,x_0) e_j,f)_{L^2((x_0,\pm\infty); dx; \cH)}
= \pm \int_{x_0}^{\pm\infty} dx \, (\psi_{+,\alpha}(z,x,x_0) e_j,f(x))_{\cH}  \lb{6.3} \\
&= \pm \int_{x_0}^{\pm\infty} dx \, (e_j,\psi_{\pm,\alpha}(z,x,x_0)^* f(x))_{\cH}, \quad j \in \cJ,
\end{split}
\end{align}
and since $j \in \cJ$ is arbitrary,
\begin{align}
\begin{split}
& f \bot \{\psi_{\pm,\alpha}(z, \, \cdot \,,x_0) e_j\}_{j \in \cJ} \, \text{ if and only if } \\
& \quad \pm \int_{x_0}^{\pm\infty} dx \, (h,\psi_{\pm,\alpha}(z,x,x_0)^* f(x))_{\cH} = 0, \quad h \in \cH,     \lb{6.4}
\end{split}
\end{align}
a fact to be exploited below in \eqref{6.5}.

Next, we prove the following generating property of deficiency spaces of $H_{\pm,\min}$:

\begin{theorem} \lb{t6.2}
Assume Hypothesis \ref{h6.1}\,$(i)$, respectively, $(ii)$, and suppose that 
$f \in L^2((x_0,\pm\infty); dx; \cH)$ satisfies for all $z \in \bbC \backslash \bbR$, 
$f \bot \ker\big(H_{\pm,\min}^* - z I\big)$. Then $f = 0$. Equivalently, $H_{\pm,\min}$ are 
completely non-self-adjoint in $L^2((x_0,\pm\infty); dx; \cH)$. 
\end{theorem}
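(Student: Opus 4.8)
The plan is to treat the right half-line $(x_0,\infty)$; the left half-line is identical after reflection. Fix any self-adjoint extension $H_{+,\alpha}$ of $H_{+,\min}$ and set $u_z := (H_{+,\alpha} - z I)^{-1} f$ for $z \in \bbC\backslash\bbR$. First I would reformulate the hypothesis. By \eqref{6.4}, the assumption $f \bot \ker(H_{+,\min}^* - z I)$ for all $z \in \bbC\backslash\bbR$ is equivalent to $\int_{x_0}^\infty \psi_{+,\alpha}(z,x,x_0)^* f(x)\,dx = 0$ for all $z \in \bbC\backslash\bbR$; by the description of $\cK_0^\bot$ in \eqref{5.9a} it is also equivalent to $f \in \bigcap_{z\in\bbC\backslash\bbR} \ran(H_{+,\min} - z I)$. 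Since $H_{+,\min}\subseteq H_{+,\alpha}$, the latter forces $u_z \in \dom(H_{+,\min})$, so that $u_z$ obeys the Cauchy conditions $u_z(x_0) = 0$ and $u_z'(x_0)=0$ that characterize the minimal operator at the regular endpoint $x_0$ (there being no condition at $\infty$ by the limit-point hypothesis).

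The decisive observation is that $u_z$, viewed pointwise in $x$, is in fact entire in $z$. Indeed $u_z$ solves $-u_z'' + (V - z) u_z = f$ with $u_z(x_0)=u_z'(x_0)=0$, so by the uniqueness assertion of Theorem \ref{t2.3} it coincides with the unique solution of this initial value problem, and by Theorem \ref{t2.3}\,$(iv)$ that solution is entire in $z$ for each fixed $x$ (the initial data $0,0$ and the inhomogeneity $f$ being independent of $z$). Consequently the boundary values of the resolvent from the two half-planes coincide: for each $\lambda\in\bbR$ and $x\in[x_0,\infty)$ one has $\lim_{\varepsilon\downarrow 0} u_{\lambda+i\varepsilon}(x) = \lim_{\varepsilon\downarrow 0} u_{\lambda - i\varepsilon}(x)$, so the resolvent jump across the real axis vanishes pointwise in $x$.

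I would then conclude with Stone's formula. Applying Lemma \ref{l2.4a} to $T = H_{+,\alpha}$ with $F\equiv 1$, an arbitrary test function $g \in C_0^\infty((x_0,\infty);\cH)$, and $f$, the quantity $\big(g, E_{H_{+,\alpha}}((\lambda_1,\lambda_2]) f\big)$ is the iterated $\delta\downarrow 0$, $\varepsilon\downarrow 0$ limit of $(2\pi i)^{-1}\int_{\lambda_1+\delta}^{\lambda_2+\delta} \big(g, [u_{\lambda+i\varepsilon} - u_{\lambda - i\varepsilon}]\big)\,d\lambda$. Because $g$ is supported in a compact interval $[x_0,R]$, each inner product is an integral over $[x_0,R]$ only, and on this compact set the initial value solution is jointly continuous in $(z,x)$ and bounded for $z$ in compact subsets of $\bbC$ (again by Theorem \ref{t2.3}); hence the integrand tends to $0$ uniformly as $\varepsilon\downarrow 0$. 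Dominated convergence yields $\big(g, E_{H_{+,\alpha}}((\lambda_1,\lambda_2]) f\big) = 0$ for all $\lambda_1<\lambda_2$ and all such $g$, and density of $C_0^\infty((x_0,\infty);\cH)$ gives $E_{H_{+,\alpha}}((\lambda_1,\lambda_2]) f = 0$ for every interval, whence $f = 0$. The stated equivalence with complete non-self-adjointness of $H_{+,\min}$ is then read off directly from the definition of $\cK_0^\bot$ in \eqref{5.9a} together with Lemma \ref{l5.4}.

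The main obstacle is the passage to the limit in Stone's formula: a priori the resolvent jump is controlled only in $L^2$, where it blows up as $\varepsilon\downarrow 0$ on the spectrum. The point of the initial value reformulation is precisely to replace this $L^2$ statement by the pointwise entire continuation of $u_z$, after which the compact support of the test function $g$ and the local boundedness of the initial value solution supplied by Theorem \ref{t2.3} make the interchange routine. A secondary technical point to verify carefully is the identification $u_z \in \dom(H_{+,\min})$ with vanishing Cauchy data at $x_0$, which rests on the standard description of the minimal operator's domain in the limit-point case.
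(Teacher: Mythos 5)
Your proof is correct, and while it shares the paper's endgame --- Stone's formula (Lemma \ref{l2.4a}) applied to $\big(g, E_{H_{+,\alpha}}((\lambda_1,\lambda_2])f\big)$ together with the observation that the jump of $(H_{+,\alpha}-zI)^{-1}f$ across the real axis vanishes because this function of $z$ extends entirely --- it reaches that key entirety statement by a genuinely different mechanism. The paper proceeds computationally: it inserts the Green's function \eqref{3.63A} into Stone's formula, uses the orthogonality hypothesis in its weak form \eqref{6.4} to annihilate the tail integrals over $(x,\infty)$ containing $\psi_{+,\alpha}$, and then observes that the surviving kernel $\theta_\alpha(z,x,x_0)\phi_\alpha(\bar z,x',x_0)^* - \phi_\alpha(z,x,x_0)\theta_\alpha(\bar z,x',x_0)^*$ is entire in $z$. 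You instead recast the hypothesis as $f \in \bigcap_{z\in\bbC\backslash\bbR} \ran(H_{+,\min}-zI)$ via \eqref{5.9a}, deduce $u_z := (H_{+,\alpha}-zI)^{-1}f \in \dom(H_{+,\min})$, and identify $u_z$ with the unique solution of the Cauchy problem with zero data at $x_0$, entire in $z$ by Theorem \ref{t2.3}\,$(iv)$. These are two descriptions of the same object --- your $u_z$ is precisely the variation-of-parameters integral that the paper's computation exhibits --- but your route explains conceptually why the Weyl solution (and hence the $m$-function) drops out, at the cost of two ingredients the paper does not need to invoke explicitly: (a) the characterization $\dom(H_{+,\min}) = \{u\in\dom(H_{+,\max}) \,|\, u(x_0)=u'(x_0)=0\}$ in the limit-point case, which is not stated in the paper but is derivable from \eqref{3.13a}, Green's formula \eqref{3.17A}, Definition \ref{d3.6}, and solvability of the initial value problem at the regular endpoint $x_0$; and (b) boundedness of the initial value solution jointly in $(z,x)$ on compacts, needed to interchange the $\varepsilon\downarrow 0$ limit with the $\lambda$-integral --- this is implicit in the Volterra-iteration proof of Theorem \ref{t2.3} (though not in its statement), and the paper's own proof leans on the same kind of fact when it ``freely'' interchanges limits. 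With those two points secured, your argument is complete, and your final equivalence with complete non-self-adjointness via \eqref{5.9a} and Lemma \ref{l5.4} coincides with the paper's.
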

\begin{proof}
We focus on the right-half line $[x_0,\infty)$ and recall the $\cB(\cH)$-valued Green's 
function $G_{+,\alpha}(z,\, \cdot \, , \, \cdot \,)$ in
\eqref{3.63A} of a self-adjoint extension $H_{+,\alpha}$ of $H_{+,\min}$.

Choosing a test vector $\eta \in C_0^{\infty}((x_0,\infty); \cH)$, $\lambda_j \in \bbR$,
$j=1,2$, $\lambda_1 < \lambda_2$, one computes with the help of Stone's formula
(cf.\ Lemma \ref{l2.4a}),
\begin{align}
& (\eta, E_{H_{+,\alpha}}((\lambda_1,\lambda_2])) f)_{L^2((x_0,\infty); dx; \cH)}    \no \\
& \quad = \lim_{\delta \downarrow 0} \lim_{\varepsilon \downarrow 0} \f{1}{2 \pi i}
\int_{\lambda_1 + \delta}^{\lambda_2 + \delta} d\lambda \,
\big[(\eta, (H_{+,\alpha} - (\lambda + i \varepsilon)I)^{-1} f)_{L^2((x_0,\infty); dx; \cH)} \no \\
& \hspace*{4.2cm}
- (\eta, (H_{+,\alpha} - (\lambda - i \varepsilon)I)^{-1} f)_{L^2((x_0,\infty); dx; \cH)}\big] \no \\
& \quad = \lim_{\delta \downarrow 0} \lim_{\varepsilon \downarrow 0} \f{1}{2 \pi i}
\int_{\lambda_1 + \delta}^{\lambda_2 + \delta} d\lambda \int_{x_0}^{\infty} dx \no \\
& \qquad \times \bigg\{\bigg[\bigg(\eta(x), \psi_{+,\alpha}(\lambda + i \varepsilon,x,x_0)
\int_{x_0}^x dx' \, \phi_{\alpha}(\lambda - i \varepsilon, x', x_0)^* f(x')\bigg)_{\cH}    \no \\
& \hspace*{1.5cm}
+ \underbrace{\int_{x_0}^{\infty} dx' \, (\phi_{\alpha}(\lambda + i \varepsilon,x,x_0)^* \eta(x),
\psi_{+,\alpha}(\lambda - i \varepsilon,x',x_0)^* f(x'))_{\cH}}_{=0 \text{ by \eqref{6.4}}}  \no \\
& \hspace*{1.5cm}
- \bigg(\eta(x), \phi_{\alpha}(\lambda + i \varepsilon,x,x_0) \int_{x_0}^x dx' \,
\psi_{+,\alpha}(\lambda - i \varepsilon,x',x_0)^* f(x')\bigg)_{\cH}\bigg]    \no \\
& \hspace*{1.5cm}
- \bigg[\bigg(\eta(x), \psi_{+,\alpha}(\lambda - i \varepsilon,x,x_0)
\int_{x_0}^x dx' \, \phi_{\alpha}(\lambda + i \varepsilon, x', x_0)^* f(x')\bigg)_{\cH}    \no \\
& \hspace*{2cm}
+ \underbrace{\int_{x_0}^{\infty} dx' \, (\phi_{\alpha}(\lambda - i \varepsilon,x,x_0)^* \eta(x),
\psi_{+,\alpha}(\lambda + i \varepsilon,x',x_0)^* f(x'))_{\cH}}_{= 0 \text{ by \eqref{6.4}}}  \no \\
& \hspace*{2cm}
- \bigg(\eta(x), \phi_{\alpha}(\lambda - i \varepsilon,x,x_0) \int_{x_0}^x dx' \,
\psi_{+,\alpha}(\lambda + i \varepsilon,x',x_0)^* f(x')\bigg)_{\cH}\bigg]\bigg\}.   \lb{6.5} 
\end{align} 
Here we twice employed the orthogonality condition \eqref{6.4} in the terms with underbraces.

Thus, one finally concludes, 
\begin{align} 
& (\eta, E_{H_{+,\alpha}}((\lambda_1,\lambda_2])) f)_{L^2((x_0,\infty); dx; \cH)}    \no \\
& \quad = \lim_{\delta \downarrow 0} \lim_{\varepsilon \downarrow 0} \f{1}{2 \pi i}
\int_{\lambda_1 + \delta}^{\lambda_2 + \delta} d\lambda \int_{x_0}^{\infty} dx \int_{x_0}^x dx' \no \\
& \hspace*{4cm}
\times \big[(\eta(x),[\theta_{\alpha}(\lambda + i \varepsilon,x,x_0)
\phi_{\alpha}(\lambda - i \varepsilon,x',x_0)^*    \no \\
& \hspace*{5cm} - \phi_{\alpha}(\lambda + i \varepsilon,x,x_0)
\theta_{\alpha}(\lambda - i \varepsilon,x,x_0)^*]f(x'))_{\cH}    \no \\
& \hspace*{4.5cm}
- (\eta(x),[\theta_{\alpha}(\lambda - i \varepsilon,x,x_0)
\phi_{\alpha}(\lambda + i \varepsilon,x',x_0)^*    \no \\
& \hspace*{5cm} - \phi_{\alpha}(\lambda - i \varepsilon,x,x_0)
\theta_{\alpha}(\lambda + i \varepsilon,x,x_0)^*]f(x'))_{\cH} \big]   \no \\
& \quad = 0.   \lb{6.6}
\end{align}
Here we used the fact that $\eta$ has compact support, rendering all $x$-integrals
over the bounded set $\supp \, (\eta)$. In addition, we employed the property that for 
fixed $x \in [x_0,\infty)$, $\phi_{\alpha}(z,x,x_0)$ and $\theta_{\alpha}(z,x,x_0)$ are entire 
with respect to $z \in \bbC$, permitting freely the interchange of the $\varepsilon$ limit with 
all integrals and implying the vanishing of the limit $\varepsilon \downarrow 0$ in the last 
step in \eqref{6.6}.

Since $\eta \in C_0^{\infty}((x_0,\infty); \cH)$ and $\lambda_1, \lambda_2 \in \bbR$ were
arbitrary, \eqref{6.6} proves $f=0$.

The fact that $H_{\pm,\min}$ are completely non-self-adjoint in $L^2((x_0,\pm\infty); dx; \cH)$ 
now follows from \eqref{5.9a}.
\end{proof}

We note that Theorem \ref{t6.2} in the context of regular (and quasi-regular) half-line differential operators with scalar coefficients has been established by Gilbert \cite[Theorem~3]{Gi72}. The 
corresponding result for $2n \times 2n$ Hamltonian systems, $n \in \bbN$, was established in 
\cite[Proposition~7.4]{DLDS88}, and the case of indefinite Sturm--Liouville operators in the associated Krein space has been treated in \cite[Proposition~4.8]{BT07}. While these proofs 
exhibit certain similarities with that of Theorem \ref{t6.2}, it appears that our approach in the 
case of a regular half-line Schr\"odinger operator with $\cB(\cH)$-valued potential is a 
canonical one.  

For future purpose we recall formulas \eqref{2.59a}--\eqref{2.62}, and now add some 
additional results:

\begin{lemma} \lb{l6.3} 
Assume Hypothesis \ref{h6.1}\,$(i)$, respectively, $(ii)$, and let $z \in \bbC \backslash \bbR$. 
Then, for all $h \in \cH$, and $\rho_{+,\alpha}(\, \cdot \, , x_0)$-a.e.~$\lambda \in \sigma(H_{\pm,\alpha})$, 
\begin{align} 
& \pm \slim_{R \to \infty} \int_{x_0}^{\pm R} dx \, \phi_{\alpha}(\lambda,x,x_0)^* 
\psi_{\pm,\alpha}(z,x,x_0) h = \pm (\lambda -z)^{-1} h,   \lb{6.10} \\
& \pm \slim_{R \to \infty} \int_{x_0}^{\pm R} dx \, \theta_{\alpha}(\lambda,x,x_0)^* 
\psi_{\pm,\alpha}(z,x,x_0) h = \mp (\lambda -z)^{-1} m_{\pm,\alpha}(z,x_0) h,  \lb{6.11} 
\end{align}
where $\slim$ refers to the $L^2(\bbR;d\rho_{+,\alpha}(\,\cdot\,,x_0);\cH)$-limit.
\end{lemma}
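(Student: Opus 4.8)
The plan is to treat the right half-line $[x_0,\infty)$ in detail; the left half-line $(-\infty,x_0]$ is handled identically after reflection, which is exactly what produces the signs $\pm$ and $\mp$ in \eqref{6.10}, \eqref{6.11}. The starting observation is that, by the very definition of the generalized Fourier transform $U_{+,\alpha}$ in \eqref{2.40}, the left-hand side of \eqref{6.10} is nothing but $U_{+,\alpha}\big[\psi_{+,\alpha}(z,\,\cdot\,,x_0)h\big]$ evaluated at $\lambda$ (the $\slim$ being the $L^2(\bbR;d\rho_{+,\alpha}(\,\cdot\,,x_0);\cH)$-limit), where $\psi_{+,\alpha}(z,\,\cdot\,,x_0)h\in L^2((x_0,\infty);dx;\cH)$ for $z\in\bbC\bs\bbR$ by \eqref{2.62}. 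Thus \eqref{6.10} is equivalent to the assertion $U_{+,\alpha}\big[\psi_{+,\alpha}(z,\,\cdot\,,x_0)h\big](\lambda)=(\lambda-z)^{-1}h$ in $L^2(\bbR;d\rho_{+,\alpha};\cH)$.

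To prove this identity I would argue by duality. Since $U_{+,\alpha}$ is unitary it suffices to check, for all $g$ in the dense set $C_0^\infty((x_0,\infty);\cH)$, that $(\psi_{+,\alpha}(z,\,\cdot\,,x_0)h,g)_{L^2((x_0,\infty);dx;\cH)}=\big((\lambda-z)^{-1}h,\widehat g_{+,\alpha}\big)_{L^2(\bbR;d\rho_{+,\alpha};\cH)}$. The left-hand side equals $(h,d)_\cH$ with $d=\int_{x_0}^\infty dx\,\psi_{+,\alpha}(z,x,x_0)^*g(x)$. To evaluate $d$ I would compute the resolvent $(H_{+,\alpha}-\bar z I)^{-1}g$ in two ways on the interval $[x_0,\inf(\supp g))$: the Green's function \eqref{3.63A} gives $\big((H_{+,\alpha}-\bar z I)^{-1}g\big)(x)=\phi_\alpha(\bar z,x,x_0)\,d$ there, while the spectral synthesis \eqref{2.45} gives $\big((H_{+,\alpha}-\bar z I)^{-1}g\big)(x)=\int_\bbR\phi_\alpha(\lambda,x,x_0)\,d\rho_{+,\alpha}(\lambda,x_0)\,(\lambda-\bar z)^{-1}\widehat g_{+,\alpha}(\lambda)$. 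Equating the Dirichlet and Neumann data at the regular endpoint $x_0$, and using that $\phi_\alpha(\lambda,x_0,x_0)=-\sin(\alpha)$ and $\phi_\alpha'(\lambda,x_0,x_0)=\cos(\alpha)$ are independent of $\lambda$ together with $\sin^2(\alpha)+\cos^2(\alpha)=I_\cH$, forces $d=\int_\bbR(\lambda-\bar z)^{-1}\,d\rho_{+,\alpha}(\lambda,x_0)\,\widehat g_{+,\alpha}(\lambda)$. Since $\overline{(\lambda-z)^{-1}}=(\lambda-\bar z)^{-1}$, this is precisely the right-hand side, and \eqref{6.10} follows. The only technical point is the legitimacy of reading off these $\lambda$-independent boundary data from the $L^2(dx)$-limit in \eqref{2.45}, which I would justify by continuity of the boundary-trace functionals on $\dom(H_{+,\alpha})$ at the regular endpoint $x_0$.

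For \eqref{6.11} I would run a Wronskian (Lagrange) argument. Since $\theta_\alpha(\lambda,\,\cdot\,,x_0)^*$ solves the adjoint equation $-Y''+YV=\lambda Y$ for real $\lambda$ while $\psi_{+,\alpha}(z,\,\cdot\,,x_0)$ solves $\tau Y=zY$, the operator Wronskian \eqref{2.33A} satisfies $\frac{d}{dx}W\big(\theta_\alpha(\lambda,\,\cdot\,,x_0)^*,\psi_{+,\alpha}(z,\,\cdot\,,x_0)\big)=(\lambda-z)\,\theta_\alpha(\lambda,\,\cdot\,,x_0)^*\psi_{+,\alpha}(z,\,\cdot\,,x_0)$. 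Integrating over $[x_0,R]$ and computing the endpoint Wronskian at $x_0$ from the initial conditions \eqref{2.5} (again using commutativity of $\sin(\alpha),\cos(\alpha)$ and $\sin^2+\cos^2=I_\cH$) gives the pointwise identity $(\lambda-z)\int_{x_0}^R dx\,\theta_\alpha(\lambda,x,x_0)^*\psi_{+,\alpha}(z,x,x_0)h=\widetilde W_R(\lambda)h-m_{+,\alpha}(z,x_0)h$, where $\widetilde W_R(\lambda)=W\big(\theta_\alpha(\lambda,\,\cdot\,,x_0)^*,\psi_{+,\alpha}(z,\,\cdot\,,x_0)\big)(R)$. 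The analogous computation with $\phi_\alpha$ in place of $\theta_\alpha$ produces the endpoint value $-I_\cH$, so that in view of the already-proven \eqref{6.10} the corresponding far-boundary term tends to $0$ in $L^2(d\rho_{+,\alpha})$. Consequently \eqref{6.11} is equivalent to the statement that $(\lambda-z)^{-1}\widetilde W_R(\lambda)h\to 0$ in $L^2(\bbR;d\rho_{+,\alpha};\cH)$ as $R\to\infty$.

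I expect this far-boundary vanishing to be the main obstacle. The route I would take is to exploit that $\psi_{+,\alpha}(z,\,\cdot\,,x_0)h$ is $L^2$ near $+\infty$ (limit-point case, $\Im(z)\neq0$), whereas on $\supp(d\rho_{+,\alpha}(\,\cdot\,,x_0))=\sigma(H_{+,\alpha})$ the solutions $\theta_\alpha(\lambda,\,\cdot\,,x_0)$ grow at most polynomially, so that the boundary Wronskian $\widetilde W_R(\lambda)$ of a decaying solution against such solutions tends to $0$ in the $d\rho_{+,\alpha}$-mean; this is precisely the kind of convergence that underlies the well-definedness of $U_{+,\alpha}$ in \eqref{2.40}. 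An equivalent and perhaps cleaner way to organize the argument is to note that $\theta_\alpha(\lambda,\,\cdot\,,x_0)=\phi_\beta(\lambda,\,\cdot\,,x_0)$ for the rotated self-adjoint boundary operator $\beta=\alpha-(\pi/2)I_\cH$ (which carries the same Dirichlet/Neumann data at $x_0$), so the truncated integral on the left of \eqref{6.11} is a genuine $\phi_\beta$-eigenfunction integral of the $L^2$ function $\psi_{+,\alpha}(z,\,\cdot\,,x_0)h$; its convergence is then controlled by the expansion machinery of \eqref{2.40}--\eqref{2.45}. The delicate points I would watch are the reconciliation of the two spectral measures $d\rho_{+,\alpha}$ and $d\rho_{+,\beta}$ (mutually absolutely continuous on their absolutely continuous parts, with the point spectrum treated separately) and the uniform-in-$R$ domination needed to pass to the $L^2(d\rho_{+,\alpha})$-limit. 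Once $(\lambda-z)^{-1}\widetilde W_R(\lambda)h\to0$ is secured, passing to the limit in the Wronskian identity yields the claimed value $-(\lambda-z)^{-1}m_{+,\alpha}(z,x_0)h$, completing \eqref{6.11}.
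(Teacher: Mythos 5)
Your argument for \eqref{6.10} is correct and is essentially the paper's own proof: the authors likewise combine the Green's function of $H_{+,\alpha}$ with unitarity of $U_{+,\alpha}$, computing the transform of $G_{+,\alpha}(z,x,\,\cdot\,)^*h$ and of its $x$-derivative, letting $x \downarrow x_0$, and adding the $[\sin(\alpha)]^2$- and $[\cos(\alpha)]^2$-contributions; your version extracts the same information from the Dirichlet and Neumann traces of $(H_{+,\alpha}-\ol{z}\,I)^{-1}g$ at the regular endpoint, and your graph-norm justification for passing the traces through the $\slim$ in \eqref{2.45} is sound (the truncated synthesis equals $E_{H_{+,\alpha}}((\mu_1,\mu_2])(H_{+,\alpha}-\ol{z}\,I)^{-1}g$, which converges in graph norm). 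Moreover, what you offer as the ``equivalent and perhaps cleaner way'' for \eqref{6.11} --- rotating the boundary condition to $\beta=\alpha-(\pi/2)I_{\cH}$, so that $\theta_\alpha=\phi_\beta$, $m_{+,\beta}=-[m_{+,\alpha}]^{-1}$, $\psi_{+,\beta}(z,\,\cdot\,,x_0)=-\psi_{+,\alpha}(z,\,\cdot\,,x_0)[m_{+,\alpha}(z,x_0)]^{-1}$, and then applying \eqref{6.10} to the rotated problem --- is precisely the paper's proof of \eqref{6.11}.

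Your primary (Wronskian) route to \eqref{6.11}, however, has a genuine gap at its crux. The far-boundary vanishing cannot be obtained from growth bounds on $\theta_\alpha(\lambda,\,\cdot\,,x_0)$ over $\supp(d\rho_{+,\alpha}(\,\cdot\,,x_0))$, because no such bounds hold: at a discrete eigenvalue $\lambda_0$ of $H_{+,\alpha}$ below the essential spectrum (an atom of $\rho_{+,\alpha}(\,\cdot\,,x_0)$), $\phi_\alpha(\lambda_0,\,\cdot\,,x_0)$ is, up to normalization, the exponentially decaying eigenfunction, so $\theta_\alpha(\lambda_0,\,\cdot\,,x_0)$ necessarily grows exponentially (the two cannot both decay, their Wronskian being $I_{\cH}$). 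If that growth rate exceeds the decay rate of $\psi_{+,\alpha}(z,\,\cdot\,,x_0)$ (a deep bound state with $z=i$, say), then both the boundary Wronskian $W\big(\theta_\alpha(\lambda_0,\,\cdot\,,x_0)^*,\psi_{+,\alpha}(z,\,\cdot\,,x_0)\big)(R)$ and the truncated integral in \eqref{6.11} diverge as $R\to\infty$. Hence the quantity you need to kill is not even bounded on atoms of $\rho_{+,\alpha}(\,\cdot\,,x_0)$, and no $d\rho_{+,\alpha}$-mean argument based on controlling $\theta_\alpha$ on $\sigma(H_{+,\alpha})$ can succeed; the rotation trick is not merely cleaner, it is the essential step, since it replaces $\rho_{+,\alpha}(\,\cdot\,,x_0)$ by the measure $\rho_{+,\beta}(\,\cdot\,,x_0)$ to which $\theta_\alpha=\phi_\beta$ is adapted. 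Your closing concern about reconciling $d\rho_{+,\alpha}$ with $d\rho_{+,\beta}$ is exactly on target: the rotated argument literally yields an $L^2(\bbR;d\rho_{+,\beta}(\,\cdot\,,x_0);\cH)$-limit, which is the sense in which \eqref{6.11} holds across the point spectrum, and this is a point the paper's own proof passes over without comment.
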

\begin{proof}
Without loss of generality, we consider the case of $H_{+,\alpha}$ only. Let 
$u \in C_0^{\infty}((x_0,\infty); \cH) \subset L^2((x_0,\infty); dx; \cH)$ and 
$v = (H_{+,\alpha} - z I)^{-1} u$, then by Theorem \ref{t2.5}, \eqref{2.40}, and \eqref{2.45}, 
\begin{align}
u &= (H_{+,\alpha} - z I) v = \slim_{\mu_2 \uparrow \infty, \mu_1 \downarrow - \infty} 
\int_{\mu_1}^{\mu_2} \phi_{\alpha}(\lambda, \, \cdot \, , x_0) \, d\rho_{+,\alpha}(\lambda,x_0) \, 
\hatt u_{+,\alpha}(\lambda)   \no \\
& = \slim_{\mu_2 \uparrow \infty, \mu_1 \downarrow - \infty} 
\int_{\mu_1}^{\mu_2} (\lambda - z) \phi_{\alpha}(\lambda, \, \cdot \, , x_0) \, d\rho_{+,\alpha} \, (\lambda,x_0) \hatt v_{+,\alpha}(\lambda), 
\end{align}
that is,
\begin{equation}
\hatt v_{+,\alpha}(\lambda) = (\lambda - z)^{-1} \hatt u_{+,\alpha}(\lambda) \, 
\text{ for $\rho_{+,\alpha}(\, \cdot \,,x_0)$-a.e.~$\lambda \in \sigma(H_{+,\alpha})$.}
\end{equation}
Hence,
\begin{align}
v &= (H_{+,\alpha} - z I)^{-1} u   \no \\
& = \slim_{\mu_2 \uparrow \infty, \mu_1 \downarrow - \infty} 
\int_{\mu_1}^{\mu_2} \phi_{\alpha}(\lambda, \, \cdot \, , x_0) \, d\rho_{+,\alpha} 
\, (\lambda,x_0) \hatt u_{+,\alpha}(\lambda) (\lambda - z)^{-1}    \no \\
& = \int_{x_0}^\infty dx' \, G_{+,\alpha}(z,\, \cdot \,,x') u(x').    
\end{align}
Thus one computes, given unitarity of $U_{+,\alpha}$ (cf.\ \eqref{2.40}, \eqref{2.45}), 
\begin{align}
& \big(h, \big((H_{+,\alpha} - z I)^{-1} u\big)(x)\big)_{\cH} 
= \int_{x_0}^{\infty} dx' \, (h, G_{+,\alpha}(z,x,x') u(x'))_{\cH}    \no \\
& \quad =  \int_{x_0}^{\infty} dx' \, (G_{+,\alpha}(z,x,x')^* h, u(x'))_{\cH}    \no \\ 
& \quad = \slim_{\mu_2 \uparrow \infty, \mu_1 \downarrow - \infty} 
\int_{\mu_1}^{\mu_2} \big(\hatt{(G_{+,\alpha}(z,x,\, \cdot \,)^* h)} (\lambda), 
d \rho_{+,\alpha} (\lambda, x_0) \, \hatt u_{+,\alpha}(\lambda)\big)_{\cH}     \no \\
& \quad = \slim_{\mu_2 \uparrow \infty, \mu_1 \downarrow - \infty} 
\int_{\mu_1}^{\mu_2} \big(h,\phi_{\alpha}(\lambda, x , x_0) 
\, d\rho_{+,\alpha}(\lambda,x_0) \, \hatt u_{+,\alpha}(\lambda)\big)_{\cH} (\lambda - z)^{-1}   \no \\
& \quad = \slim_{\mu_2 \uparrow \infty, \mu_1 \downarrow - \infty} 
\int_{\mu_1}^{\mu_2} \big((\lambda - \ol{z})^{-1}\phi_{\alpha}(\lambda, x , x_0)^* h,  
\, d\rho_{+,\alpha}(\lambda,x_0) \, \hatt u_{+,\alpha}(\lambda)\big)_{\cH}.   \no \\
\end{align}
Since $u \in C_0^{\infty}((x_0,\infty); \cH)$ was arbitrary, one concludes that 
\begin{align}
\begin{split} 
\Big(\hatt{G_{+,\alpha}(z,x, \, \cdot \,)^* h}\Big)(\lambda) 
= (\lambda - \ol{z})^{-1}\phi_{\alpha}(\lambda, x , x_0)^* h, \quad h \in \cH, \; 
z \in \bbC \backslash \bbR,&    \lb{6.11A} \\ 
\text{for $\rho_{+,\alpha}(\, \cdot \,,x_0)$-a.e.~$\lambda \in \sigma(H_{+,\alpha})$.}&
\end{split}  
\end{align}
In precisely the same manner one derives,
\begin{align}
\begin{split} 
\Big(\partial_x \hatt{G_{+,\alpha}(z,x, \, \cdot \,)^* h}\Big)(\lambda) 
= (\lambda - \ol{z})^{-1}\phi_{\alpha}'(\lambda, x , x_0)^* h, \quad h \in \cH, \; 
z \in \bbC \backslash \bbR,&    \lb{6.11B} \\ 
\text{for $\rho_{+,\alpha}(\, \cdot \,,x_0)$-a.e.~$\lambda \in \sigma(H_{+,\alpha})$.}&
\end{split}  
\end{align}
Taking $x \downarrow x_0$ in \eqref{6.11A} and \eqref{6.11B}, observing that 
\begin{align}
\begin{split} 
& G_{+,\alpha}(z,x_0,x') = \sin(\alpha) \psi_{+,\alpha}({\ol z},x',x_0),   \\ 
& [\partial_x G_{+,\alpha}(z,x,x')]\big|_{x=x_0} = \cos(\alpha) \psi_{+,\alpha}({\ol z},x',x_0), 
\end{split} 
\end{align} 
and choosing 
$h = \sin(\alpha) g$ in \eqref{6.11A} and $h = \cos(\alpha) g$ in \eqref{6.11B}, 
$g \in \cH$, then yields
\begin{align}
\hatt{\Big(\psi_{+,\alpha} (\ol z, \, \cdot \,, x_0) [\sin(\alpha)]^2 g\Big)}(\lambda) 
=  (\lambda - {\ol z})^{-1} [\sin(\alpha)]^2 g,&   \lb{6.11C} \\
\hatt{\Big(\psi_{+,\alpha} (\ol z, \, \cdot \,, x_0) [\cos(\alpha)]^2 g\Big)}(\lambda) 
= (\lambda - \ol z)^{-1} [\cos(\alpha)]^2 g,&   \lb{6.11D} \\
g \in \cH, \; z \in \bbC \backslash \bbR, \, 
\text{ for $\rho_{+,\alpha}(\, \cdot \,,x_0)$-a.e.~$\lambda \in \sigma(H_{+,\alpha})$.}&  \no 
\end{align}
Adding equations \eqref{6.11C} and \eqref{6.11D} yields relation \eqref{6.10}. 

Finally, changing $\alpha$ into $\alpha - (\pi/2)I_{\cH}$, and noticing
\begin{align}
& \phi_{\alpha - (\pi/2)I_{\cH}} (z,\, \cdot \,, x_0) = \theta_{\alpha} (z,\, \cdot \,, x_0), 
\quad  \theta_{\alpha - (\pi/2)I_{\cH}} (z,\, \cdot \,, x_0) = - \phi_{\alpha} (z,\, \cdot \,, x_0),   \\
& m_{+,\alpha - (\pi/2)I_{\cH}} (z,x_0) = - [m_{+,\alpha} (z,x_0)]^{-1},     \\
& \psi_{+,\alpha - (\pi/2) I_{\cH}} (z, \, \cdot \,,x_0) = - \psi_{+,\alpha} (z, \, \cdot \,,x_0) 
[m_{+,\alpha} (z,x_0)]^{-1},   
\end{align}
yields 
\begin{equation}
\int_{x_0}^{\infty} dx \, \theta_{\alpha}(\lambda,x,x_0)^* \psi_{\pm,\alpha}(z,x,x_0) \wti h
= \mp (\lambda -z)^{-1} m_{\pm,\alpha}(z,x_0) \wti h,
\end{equation} 
with $\wti h = - [m_{+,\alpha} (z,x_0)]^{-1} h$, and hence \eqref{6.11} since $h \in \cH$ was 
arbitrary. 
\end{proof}

By Theorem \ref{t3.3}
\begin{equation}
[\Im(m_{\pm,\alpha}(z,x_0))]^{-1} \in \cB(\cH), \quad z\in\bbC\backslash\bbR,   \lb{6.11b}
\end{equation}
therefore
\begin{align}
& \big(\psi_{\pm,\alpha}(z, \, \cdot \,,x_0) [\pm (\Im(z))^{-1} \Im(m_{\pm,\alpha}(z,x_0))]^{-1/2} e_j,
\psi_{\pm,\alpha}(z, \, \cdot \,,x_0)    \no \\
& \qquad \times [\pm (\Im(z))^{-1}\Im(m_{\pm,\alpha}(z,x_0))]^{-1/2}
e_k\big)_{L^2((x_0,\pm\infty);dx;\cH)}    \no \\
& \quad = \big([\pm \Im(m_{\pm,\alpha}(z,x_0))]^{-1/2} e_j,
\Im(m_{\pm,\alpha}(z,x_0) \no \\
& \qquad \times [\pm \Im(m_{\pm,\alpha}(z,x_0))]^{-1/2} e_k\big)_{\cH}   \no \\
& \quad = (e_j,e_k)_{\cH} = \delta_{j,k}, \quad j,k \in \cJ, \; z\in\bbC\backslash\bbR.  \lb{6.11c}
\end{align}
Thus, one obtains in addition to \eqref{6.1} that
\begin{equation}
\big\{\Psi_{\pm,\alpha,j}(z,\, \cdot \, , x_0) =
\psi_{\pm,\alpha}(z, \, \cdot \,,x_0) [\pm (\Im(z))^{-1} \Im(m_{\pm,\alpha}(z,x_0))]^{-1/2}
e_j\big\}_{j \in \cJ}      \lb{6.12}
\end{equation}
is an orthonormal basis for $\cN_{\pm,z} = \ker\big(H_{\pm,\min}^* - z I\big)$,
$z \in \bbC \backslash \bbR$, and hence (cf.\ the definition of $P_{\cN}$ in Section \ref{s5})
\begin{equation}
P_{\cN_{\pm,i}} = \sum_{j \in \cJ}
\big(\Psi_{\pm,\alpha,j}(i,\, \cdot \, , x_0), \, \cdot \, \big)_{L^2((x_0,\pm\infty); dx; \cH)} 
\Psi_{\pm,\alpha,j}(i,\, \cdot \, , x_0).  \lb{6.13} 
\end{equation}
Consequently (cf.\ \eqref{5.3}), one obtains for the half-line Donoghue-type
$m$-functions,
\begin{align}
\begin{split}
M_{H_{\pm,\alpha}, \cN_{\pm,i}}^{Do} (z,x_0) &= \pm P_{\cN_{\pm,i}} (z H_{\pm,\alpha} + I)
(H_{\pm,\alpha} - z I)^{-1} P_{\cN_{\pm,i}} \big|_{\cN_{\pm,i}},     \lb{6.14} \\
&= \int_\bbR d\Omega_{H_{\pm,\alpha},\cN_{\pm,i}}^{Do}(\lambda,x_0) \bigg[\f{1}{\lambda-z} -
\f{\lambda}{\lambda^2 + 1}\bigg], \quad z\in\bbC\backslash\bbR,
\end{split}
\end{align}
where $\Omega_{H_{\pm,\alpha},\cN_{\pm,i}}^{Do}(\, \cdot\, , x_0)$ satisfies the analogs of 
\eqref{5.6}--\eqref{5.8} (resp., \eqref{A.42A}--\eqref{A.42b}).

Next, we explicitly compute $M_{H_{\pm,\alpha}, \cN_{\pm,i}}^{Do} (\, \cdot \,,x_0)$. 

\begin{theorem} \lb{t6.3}
Assume Hypothesis \ref{h6.1}\,$(i)$, respectively, $(ii)$. Then,
\begin{align}
\begin{split} 
& M_{H_{\pm,\alpha}, \cN_{\pm,i}}^{Do} (z,x_0)
= \pm \sum_{j,k \in \cJ} \big(e_j, m_{\pm,\alpha}^{Do}(z,x_0) e_k\big)_{\cH}     \\
& \quad \times (\Psi_{\pm,\alpha,k}(i, \, \cdot \, ,x_0), \, \cdot \, )_{L^2((x_0,\pm \infty); dx; \cH)}    
\Psi_{\pm,\alpha,j}(i, \, \cdot \, ,x_0) \big|_{\cN_{\pm,i}},
\quad z \in \bbC \backslash \bbR,     \lb{6.15} 
\end{split} 
\end{align}
where the $\cB(\cH)$-valued Nevanlinna--Herglotz functions
$m_{\pm,\alpha}^{Do}(\, \cdot \, , x_0)$ are given by
\begin{align}
m_{\pm,\alpha}^{Do}(z,x_0) &= \pm [\pm \Im(m_{\pm,\alpha}(i,x_0))]^{-1/2}
[m_{\pm,\alpha}(z,x_0) - \Re(m_{\pm,\alpha}(i,x_0))]      \no \\
& \quad \times [\pm \Im(m_{\pm,\alpha}(i,x_0))]^{-1/2}     \lb{6.16} \\
&= d_{\pm, \alpha} \pm \int_\bbR d\omega_{\pm,\alpha}^{Do}(\lambda,x_0)
\bigg[\f{1}{\lambda-z} -
\f{\lambda}{\lambda^2 + 1}\bigg], \quad z\in\bbC\backslash\bbR.     \lb{6.16a}
\end{align}
Here $d_{\pm,\alpha} = \Re(m_{\pm,\alpha}^{Do}(i,x_0)) \in \cB(\cH)$, and
\begin{equation}
\omega_{\pm,\alpha}^{Do}(\, \cdot \,,x_0) = [\pm \Im(m_{\pm,\alpha}(i,x_0))]^{-1/2}
\rho_{\pm,\alpha}(\,\cdot\,,x_0) [\pm \Im(m_{\pm,\alpha}(i,x_0))]^{-1/2}
\end{equation}
satisfy the analogs of \eqref{A.42a}, \eqref{A.42b}.
\end{theorem}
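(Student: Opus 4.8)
The plan is to prove \eqref{6.15} by evaluating both sides on the orthonormal basis \eqref{6.12} of $\cN_{\pm,i}$. In that basis the right-hand side of \eqref{6.15} has $(n,l)$ matrix element $\pm\big(e_n, m_{\pm,\alpha}^{Do}(z,x_0) e_l\big)_{\cH}$, while by the second representation in \eqref{6.14} (cf.\ \eqref{5.3}) together with $P_{\cN_{\pm,i}}\Psi_{\pm,\alpha,l}(i,\, \cdot \,,x_0)=\Psi_{\pm,\alpha,l}(i,\, \cdot \,,x_0)$ the left-hand side has $(n,l)$ matrix element $\pm\big[z\delta_{n,l}+(z^2+1)\big(\Psi_{\pm,\alpha,n}(i,\, \cdot \,,x_0),(H_{\pm,\alpha}-zI)^{-1}\Psi_{\pm,\alpha,l}(i,\, \cdot \,,x_0)\big)_{L^2}\big]$. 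The common factor $\pm$ cancels, so it suffices to establish, for all $n,l\in\cJ$, the scalar identity
\[
z\delta_{n,l}+(z^2+1)\big(\Psi_{\pm,\alpha,n}(i,\, \cdot \,,x_0),(H_{\pm,\alpha}-zI)^{-1}\Psi_{\pm,\alpha,l}(i,\, \cdot \,,x_0)\big)_{L^2} = \big(e_n, m_{\pm,\alpha}^{Do}(z,x_0) e_l\big)_{\cH}.
\]
I will carry out the right half-line in detail and indicate the sign changes for the left half-line at the end.

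The key step is to evaluate the resolvent matrix element in the spectral representation of Theorem \ref{t2.6}. By Lemma \ref{l6.3} (formula \eqref{6.10}) the unitary $U_{+,\alpha}$ of \eqref{2.40} maps $\psi_{+,\alpha}(i,\, \cdot \,,x_0)h$ to the function $\lambda\mapsto(\lambda-i)^{-1}h$; combining this with the diagonalization $U_{+,\alpha}(H_{+,\alpha}-zI)^{-1}U_{+,\alpha}^{-1}=M_{(\lambda-z)^{-1}}$, unitarity of $U_{+,\alpha}$, and $\overline{(\lambda-i)^{-1}}=(\lambda+i)^{-1}$ for $\lambda\in\bbR$, I obtain
\[
\big(\psi_{+,\alpha}(i,\, \cdot \,,x_0)g,(H_{+,\alpha}-zI)^{-1}\psi_{+,\alpha}(i,\, \cdot \,,x_0)h\big)_{L^2}=\int_{\bbR}\f{1}{(\lambda^2+1)(\lambda-z)}\big(g,d\rho_{+,\alpha}(\lambda,x_0)h\big)_{\cH}.
\]
Taking $g=[\Im(m_{+,\alpha}(i,x_0))]^{-1/2}e_n$ and $h=[\Im(m_{+,\alpha}(i,x_0))]^{-1/2}e_l$ and recalling $\Psi_{+,\alpha,j}(i,\, \cdot \,,x_0)=\psi_{+,\alpha}(i,\, \cdot \,,x_0)[\Im(m_{+,\alpha}(i,x_0))]^{-1/2}e_j$ converts $d\rho_{+,\alpha}$ into the conjugated measure $d\omega_{+,\alpha}^{Do}(\lambda,x_0)=[\Im(m_{+,\alpha}(i,x_0))]^{-1/2}d\rho_{+,\alpha}(\lambda,x_0)[\Im(m_{+,\alpha}(i,x_0))]^{-1/2}$.

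To finish I will use the partial-fraction identity $\f{z^2+1}{(\lambda^2+1)(\lambda-z)}=\big[\f{1}{\lambda-z}-\f{\lambda}{\lambda^2+1}\big]-\f{z}{\lambda^2+1}$ and the normalization $\int_{\bbR}(\lambda^2+1)^{-1}d\rho_{+,\alpha}(\lambda,x_0)=\Im(m_{+,\alpha}(i,x_0))$, the latter being immediate from \eqref{2.25} together with $m_{+,\alpha}(i,x_0)^*=m_{+,\alpha}(\ol{i},x_0)$ (cf.\ \eqref{2.61}). Conjugation then gives $\int_{\bbR}(\lambda^2+1)^{-1}d\omega_{+,\alpha}^{Do}(\lambda,x_0)=I_{\cH}$, so the $-z(\lambda^2+1)^{-1}$ term integrates to $-z\delta_{n,l}$ and cancels the leading $z\delta_{n,l}$. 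The surviving term is $\big(e_n,\int_{\bbR}d\omega_{+,\alpha}^{Do}(\lambda,x_0)\big[\f{1}{\lambda-z}-\f{\lambda}{\lambda^2+1}\big]e_l\big)_{\cH}$; applying the same reasoning to \eqref{2.25} gives $\int_{\bbR}d\rho_{+,\alpha}(\lambda,x_0)\big[\f{1}{\lambda-z}-\f{\lambda}{\lambda^2+1}\big]=m_{+,\alpha}(z,x_0)-\Re(m_{+,\alpha}(i,x_0))$, and conjugating by $[\Im(m_{+,\alpha}(i,x_0))]^{-1/2}$ identifies this, via the definition \eqref{6.16}, with $m_{+,\alpha}^{Do}(z,x_0)$. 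This proves the displayed scalar identity, and the Herglotz representation \eqref{6.16a} with spectral measure $\omega_{+,\alpha}^{Do}$ drops out as a byproduct, with $d_{+,\alpha}=\Re(m_{+,\alpha}^{Do}(i,x_0))=0$ since \eqref{6.16} forces $m_{+,\alpha}^{Do}(i,x_0)=iI_{\cH}$.

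For the left half-line the computation is identical after reflecting about $x_0$: one replaces $[\Im(m_{+,\alpha}(i,x_0))]^{-1/2}$ by $[-\Im(m_{-,\alpha}(i,x_0))]^{-1/2}$ (well defined because $-m_{-,\alpha}(\, \cdot \,,x_0)$ is Nevanlinna--Herglotz, so $-\Im(m_{-,\alpha}(i,x_0))\geq0$), and carries the orientation sign from $\int_{x_0}^{-R}$ in \eqref{6.10} and the overall $\mp$ in \eqref{6.14}, \eqref{6.15} through the argument; the analogue of the normalization reads $\int_{\bbR}(\lambda^2+1)^{-1}d\rho_{-,\alpha}(\lambda,x_0)=\Im(m_{-,\alpha}(i,x_0))$, and conjugation by $[-\Im(m_{-,\alpha}(i,x_0))]^{-1/2}$ produces $-I_{\cH}$, which together with the extra sign restores the required cancellation. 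I expect the only real difficulty to be this sign-and-normalization bookkeeping; the analytic content is entirely contained in Lemma \ref{l6.3}, the spectral theorem of Theorem \ref{t2.6}, and the single partial-fraction identity, after which both the cancellation of the $z\delta_{n,l}$ term and the identification with $m_{\pm,\alpha}^{Do}(z,x_0)$ are automatic.
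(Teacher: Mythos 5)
Your proof is correct and follows essentially the same route as the paper's: both reduce \eqref{6.15} to matrix elements on the orthonormal basis \eqref{6.12}, compute the spectral transform of $\Psi_{\pm,\alpha,j}(i,\,\cdot\,,x_0)$ via Lemma \ref{l6.3} (formula \eqref{6.10} at $z=i$), and identify the resulting $d\rho_{+,\alpha}$-integral with $m_{\pm,\alpha}^{Do}(z,x_0)$ through the Herglotz representation \eqref{2.25}. The only immaterial difference is that the paper applies the eigenfunction expansion directly to $(zH_{+,\alpha}+I)(H_{+,\alpha}-zI)^{-1}$, whose kernel $(z\lambda+1)/(\lambda-z)$ produces the Herglotz kernel in one step, whereas you split off $zI_{\cN_{\pm,i}}$ and cancel it afterwards using the normalization $\int_{\bbR}(\lambda^2+1)^{-1}d\rho_{+,\alpha}(\lambda,x_0)=\Im(m_{+,\alpha}(i,x_0))$.
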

\begin{proof}
We will consider the right half-line $[x_0,\infty)$. To verify \eqref{6.15} it suffices to
insert \eqref{6.13} into \eqref{6.14} and then apply \eqref{2.28}, \eqref{2.29} to compute,
\begin{align}
& \big(\Psi_{+,\alpha,j}(i,\, \cdot \, , x_0), (z H_{+,\alpha} + I)
(H_{+,\alpha} - z I)^{-1} \Psi_{+,\alpha,k}(i,\, \cdot \, , x_0)\big)_{L^2((x_0,\infty); dx; \cH)}    \no \\
& \quad = \big(\hatt e_{j,+,\alpha}, (z \cdot + I_{\cH}) (\cdot - z I_{\cH})^{-1}
\hatt e_{k,+,\alpha}\big)_{L^2(\bbR; d \rho_{+,\alpha}; \cH)}    \no \\
& \quad = \int_{\bbR} d \big(\hatt e_{j,+,\alpha}, \rho_{+,\alpha} (\lambda,x_0)
 \hatt e_{k,+,\alpha}\big)_{\cH} \, \f{z \lambda + 1}{\lambda - z},  \quad j, k \in \cJ,  \lb{6.17}
\end{align}
where
\begin{align}
\hatt e_{j,+,\alpha} (\lambda) &= \int_{x_0}^{\infty} dx \, \phi_{\alpha}(\lambda,x,x_0)^*
\psi_{+,\alpha}(i,x,x_0) [\Im(m_{+,\alpha}(i,x_0))]^{-1/2} e_j     \no \\
&= (\lambda - i)^{-1} [\Im(m_{+,\alpha}(i,x_0))]^{-1/2} e_j, \quad j \in \cJ,    \lb{6.18}
\end{align}
employing \eqref{6.10} (with $z=i$). Thus,
\begin{align}
\eqref{6.17} &= \int_{\bbR} d \big([\Im(m_{+,\alpha}(i,x_0))]^{-1/2} e_j,
\rho_{+,\alpha} (\lambda,x_0) [\Im(m_{+,\alpha}(i,x_0))]^{-1/2} e_k\big)_{\cH}       \no \\
& \hspace*{8.5mm} \times \f{z \lambda + 1}{\lambda - z} \f{1}{\lambda^2 + 1}     \no \\
& = \int_{\bbR} d \big([\Im(m_{+,\alpha}(i,x_0))]^{-1/2} e_j,
\rho_{+,\alpha} (\lambda,x_0) [\Im(m_{+,\alpha}(i,x_0))]^{-1/2} e_k\big)_{\cH}     \no \\
& \hspace*{8.5mm} \times \bigg[\f{1}{\lambda - z} - \f{\lambda}{\lambda^2 + 1}\bigg]   \no \\
& = \big([\Im(m_{+,\alpha}(i,x_0))]^{-1/2} e_j, [m_{+,\alpha}(z,x_0) - \Re(m_{+,\alpha}(i,x_0)]
\no \\
& \hspace*{5mm}
\times [\Im(m_{+,\alpha}(i,x_0))]^{-1/2} e_k\big)_{\cH},     \lb{6.19}
\end{align}
using \eqref{2.25}, \eqref{2.25a} in the final step.
\end{proof}

\begin{remark} \lb{r6.4}
Combining Corollary \ref{c5.8} and Theorem \ref{t6.2} proves that the entire spectral information
for $H_{\pm,\alpha}$, contained in the corresponding family of spectral projections
$\{E_{H_{\pm,\alpha}}(\lambda)\}_{\lambda \in \bbR}$ in $L^2((x_0,\pm\infty); dx; \cH)$, is
already encoded in the operator-valued measure
$\{\Omega_{H_{\pm,\alpha},\cN_{\pm,i}}^{Do}(\lambda,x_0)\}_{\lambda \in \bbR}$ in
$\cN_{\pm,i}$ (including multiplicity properties of the spectrum of $H_{\pm,\alpha}$). By the
same token, invoking Theorem \ref{t6.3} shows that the entire spectral information
for $H_{\pm,\alpha}$ is already contained in
$\{\omega_{\pm,\alpha}^{Do}(\lambda,x_0)\}_{\lambda \in \bbR}$ in $\cH$. 
${}$ \hfill $\diamond$
\end{remark}

\subsection{The full-line case:} 
In the remainder of this section we turn to Schr\"odinger operators on $\bbR$, assuming
Hypotheis \ref{h2.8}. Decomposing
\begin{equation}
L^2(\bbR; dx; \cH) = L^2((-\infty,x_0); dx; \cH) \oplus L^2((x_0, \infty); dx; \cH),  \lb{6.20}
\end{equation}
and introducing the orthogonal projections $P_{\pm,x_0}$ of $L^2(\bbR; dx; \cH)$ onto
the left/right subspaces $L^2((x_0,\pm\infty); dx; \cH)$, we now define a particular minimal 
operator $H_{\min}$ in $L^2(\bbR; dx; \cH)$ via 
\begin{align}
H_{\min} &:= H_{-,\min} \oplus H_{+,\min}, \quad
H_{\min}^* = H_{-,\min}^* \oplus H_{+,\min}^*,     \lb{6.23} \\
\cN_z &\, = \ker\big(H_{\min}^* - z I \big) =  \ker\big(H_{-,\min}^* - z I \big) \oplus
 \ker\big(H_{+,\min}^* - z I \big)     \no \\
&\, = \cN_{-,z} \oplus \cN_{+,z}, \quad z \in \bbC \backslash \bbR.   \lb{6.23a}
\end{align}
We note that \eqref{6.23} is not the standard minimal operator associated with the differential 
expression $\tau$ on $\bbR$. Usually, one introduces
\begin{align}
& \hatt \oT_{\min} f = \tau f,   \no \\
& f\in \dom\big(\hatt \oT_{\min}\big)=\big\{g\in L^2(\bbR;dx;\cH) \,\big|\,
g\in W^{2,1}_{\rm loc}(\bbR;dx;\cH); \, \supp(g) \, \text{compact};     \no \\
& \hspace*{7.6cm} \tau g\in L^2(\bbR;dx;\cH)\big\}.    \lb{6.24} 
\end{align}
However, due to our limit-point assumption at $\pm \infty$, $ \hatt \oT_{\min}$ is essentially 
self-adjoint and hence (cf.\ \eqref{2.53}),
\begin{equation}
\ol{\hatt \oT_{\min}} = \oT,
\end{equation}
rendering $\hatt \oT_{\min}$ unsuitable as a minimal operator with nonzero deficiency indices. 
Consequently, $\oT$ given by \eqref{2.53}, as well as the Dirichlet extension,  
$\oT_{\rm D} = \oT_{-,{\rm D}} \oplus \oT_{+,{\rm D}}$, where $\oT_{\pm,{\rm D}} = \oT_{\pm, 0}$ 
(i.e., $\alpha = 0$ in \eqref{3.9}, see also our notational conventions following \eqref{3.63A}),  
are particular self-adjoint extensions of $H_{\min}$ in \eqref{6.23}. 

Associated with the operator $H$ in $L^2(\bbR; dx; \cH)$ (cf.\ \eqref{2.53}) we now 
introduce its $2 \times 2$ block operator representation via
\begin{equation}
(H - z I)^{-1} = \begin{pmatrix} P_{-,x_0} (H - zI)^{-1} P_{-,x_0}
& P_{-,x_0} (H - z I)^{-1} P_{+,x_0} \\
P_{+,x_0} (H - z I)^{-1} P_{-,x_0} & P_{+,x_0} (H - z I)^{-1} P_{+,x_0}
\end{pmatrix}.     \lb{6.21}
\end{equation}

Hence (cf.\ \eqref{6.12}),
\begin{align}
\begin{split}
&\big\{\hatt \Psi_{-,\alpha,j}(z,\, \cdot \, , x_0) =
P_{-,x_0} \psi_{-,\alpha}(z, \, \cdot \, ,x_0)[- (\Im(z))^{-1} \Im(m_{-,\alpha}(z,x_0))]^{-1/2} e_j,  \\
& \;\;\, \hatt \Psi_{+,\alpha,j}(z,\, \cdot \, , x_0) =
P_{+,x_0} \psi_{+,\alpha}(z, \, \cdot \, ,x_0)[(\Im(z))^{-1} \Im(m_{+,\alpha}(z,x_0))]^{-1/2}
e_j\big\}_{j \in \cJ}    \lb{6.22}
\end{split}
\end{align}
is an orthonormal basis for $\cN_{z} = \ker\big(H_{\min}^* - z I \big)$,
$z \in \bbC \backslash \bbR$, if $\{e_j\}_{j \in \cJ}$ is an orthonormal basis for $\cH$,
and (cf.\ \eqref{6.13})
\begin{align}
P_{\cN_i} &= P_{\cN_-,i} \oplus P_{\cN_+,i}   \no \\
& = \sum_{j \in \cJ} \Big[
\big(\psi_{-,\alpha}(i, \, \cdot \,,x_0) [- \Im(m_{-,\alpha}(i,x_0))]^{-1/2}e_j,
\, \cdot \, \big)_{L^2((-\infty,x_0); dx; \cH)}     \no \\
& \hspace*{4cm}
\times \psi_{-,\alpha}(i, \, \cdot \,,x_0) [- \Im(m_{-,\alpha}(i,x_0))]^{-1/2}e_j    \lb{6.26} \\
& \hspace*{1.1cm} \oplus
\big(\psi_{+,\alpha}(i, \, \cdot \,,x_0) [\Im(m_{+,\alpha}(i,x_0))]^{-1/2}e_j,
\, \cdot \, \big)_{L^2((x_0,\infty); dx; \cH)}    \no \\
& \hspace*{4cm}
\times \psi_{+,\alpha}(i, \, \cdot \,,x_0) [\Im(m_{+,\alpha}(i,x_0))]^{-1/2}e_j \Big],  \no \\
&= \sum_{j \in \cJ} \big[\big(\hatt\Psi_{-,\alpha,j}(i,\, \cdot \,, x_0), 
\, \cdot \,\big)_{L^2((-\infty,x_0); dx; \cH)}
\hatt \Psi_{-,\alpha,j}(i,\, \cdot \,,x_0)   \no \\
& \hspace*{1.2cm } \oplus \big(\hatt \Psi_{+,\alpha,j}(i,\, \cdot \,,x_0), 
\, \cdot \,\big)_{L^2((x_0,\infty); dx; \cH)} \hatt \Psi_{+,\alpha,j}(i,\, \cdot \,,x_0)\big]
\end{align}
is the orthogonal projection onto $\cN_i$.

Consequently (cf.\ \eqref{5.3}), one obtains for the full-line Donoghue-type $m$-function,
\begin{align}
\begin{split}
M_{H, \cN_i}^{Do} (z) &= P_{\cN_i} (z H + I)
(H - z I)^{-1} P_{\cN_i} \big|_{\cN_i},     \lb{6.27} \\
&= \int_\bbR d\Omega_{H,\cN_i}^{Do}(\lambda) \bigg[\f{1}{\lambda-z} -
\f{\lambda}{\lambda^2 + 1}\bigg], \quad z\in\bbC\backslash\bbR,
\end{split}
\end{align}
where $\Omega_{H,\cN_i}^{Do}(\cdot)$ satisfies the analogs of \eqref{5.6}--\eqref{5.8}
(resp., \eqref{A.42A}--\eqref{A.42b}). With respect to the decomposition \eqref{6.20}, one
can represent $M_{H, \cN_i}^{Do} (\cdot)$ as the $2 \times 2$ block operator,
\begin{align}
& M_{H, \cN_i}^{Do} (\cdot) = \big(M_{H, \cN_i,\ell,\ell'}^{Do} (\cdot)\big)_{0 \leq \ell, \ell' \leq 1}
\no \\
& \quad = z \left(\begin{smallmatrix} P_{\cN_-,i} & 0 \\
0 & P_{\cN_+,i} \end{smallmatrix}\right)      \lb{6.32} \\
& \qquad + (z^2 + 1) \left(\begin{smallmatrix} 
P_{\cN_-,i} P_{-,x_0} (H - zI)^{-1} P_{-,x_0} P_{\cN_-,i} & \;\;\;  
P_{\cN_-,i} P_{-,x_0} (H - zI)^{-1} P_{+,x_0} P_{\cN_+,i}   \\ 
P_{\cN_+,i} P_{+,x_0} (H - zI)^{-1} P_{-,x_0} P_{\cN_-,i} & \;\;\;
P_{\cN_+,i} P_{+,x_0} (H - zI)^{-1} P_{+,x_0} P_{\cN_+,i}  
\end{smallmatrix}\right),     \no 
\end{align}
and hence explicitly obtains,
\begin{align}
& M_{H, \cN_i,0,0}^{Do} (z)  \no \\
& \quad = \sum_{j,k \in \cJ}
\big(\hatt \Psi_{-,\alpha,j}(i,\, \cdot \,,x_0), (z H + I)(H - z I)^{-1}
\hatt \Psi_{-,\alpha,k}(i,\, \cdot \,,x_0)\big)_{L^2(\bbR; dx; \cH)}   \no \\
& \hspace*{1.65cm} \times
\big(\hatt \Psi_{-,\alpha,k}(i,\, \cdot \,,x_0), \, \cdot \, \big)_{L^2(\bbR; dx; \cH)}
\hatt \Psi_{-,\alpha,j}(i,\, \cdot \,,x_0),     \lb{6.33} \\
& M_{H, \cN_i,0,1}^{Do} (z)  \no \\
& \quad = \sum_{j,k \in \cJ}
\big(\hatt \Psi_{-,\alpha,j}(i,\, \cdot \,,x_0), (z H + I)(H - z I)^{-1}
\hatt \Psi_{+,\alpha,k}(i,\, \cdot \,,x_0)\big)_{L^2(\bbR; dx; \cH)}   \no \\
& \hspace*{1.65cm} \times
\big(\hatt \Psi_{+,\alpha,k}(i,\, \cdot \,,x_0), \, \cdot \, \big)_{L^2(\bbR; dx; \cH)}
\hatt \Psi_{-,\alpha,j}(i,\, \cdot \,,x_0),     \lb{6.34} \\
& M_{H, \cN_i,1,0}^{Do} (z)  \no \\
& \quad = \sum_{j,k \in \cJ}
\big(\hatt \Psi_{+,\alpha,j}(i,\, \cdot \,,x_0), (z H + I)(H - z I)^{-1}
\hatt \Psi_{-,\alpha,k}(i,\, \cdot \,,x_0)\big)_{L^2(\bbR; dx; \cH)}   \no \\
& \hspace*{1.65cm} \times
\big(\hatt \Psi_{-,\alpha,k}(i,\, \cdot \,,x_0), \, \cdot \, \big)_{L^2(\bbR; dx; \cH)}
\hatt \Psi_{+,\alpha,j}(i,\, \cdot \,,x_0),     \lb{6.35} \\
& M_{H, \cN_i,1,1}^{Do} (z)  \no \\
& \quad = \sum_{j,k \in \cJ}
\big(\hatt \Psi_{+,\alpha,j}(i,\, \cdot \,,x_0), (z H + I)(H - z I)^{-1}
\hatt \Psi_{+,\alpha,k}(i,\, \cdot \,,x_0)\big)_{L^2(\bbR; dx; \cH)}   \no \\
& \hspace*{1.65cm} \times
\big(\hatt \Psi_{+,\alpha,k}(i,\, \cdot \,,x_0), \, \cdot \, \big)_{L^2(\bbR; dx; \cH)}
\hatt \Psi_{+,\alpha,j}(i,\, \cdot \,,x_0),    \lb{6.36} \\
& \hspace*{7.55cm} z\in\bbC\backslash\bbR.     \no
\end{align}

Taking a closer look at equations \eqref{6.33}--\eqref{6.36} we now state the following
preliminary result:

\begin{lemma} \lb{l6.5}
Assume Hypothesis \ref{h2.8}. Then,
\begin{align}
& \big(\hatt \Psi_{\varepsilon,\alpha,j}(i,\, \cdot \,,x_0), (zH + I)(H - z I)^{-1}
\hatt \Psi_{\varepsilon',\alpha,k}(i,\, \cdot \,,x_0)\big)_{L^2(\bbR;dx;\cH)}    \no \\
& \quad = \int_{\bbR} d\big(\hatt e_{\varepsilon,\alpha,j}(\lambda),
\Omega_{\alpha}(\lambda,x_0) \hatt e_{\varepsilon',\alpha,k}(\lambda)\big)_{\cH^2}
\, \f{z \lambda + 1}{\lambda - z}    \no \\
& \quad = \int_{\bbR} d\big(e_{\varepsilon,\alpha,j}(\lambda),
\Omega_{\alpha}(\lambda,x_0) e_{\varepsilon',\alpha,k}(\lambda)\big)_{\cH^2}
\, \f{z \lambda + 1}{(\lambda - z)(\lambda^2 + 1)}    \no \\
& \quad = \big(e_{\varepsilon,\alpha,j}, [M_{\alpha}(z,x_0)
- \Re(M_{\alpha}(i,x_0)] e_{\varepsilon',\alpha,k}\big)_{\cH^2},    \lb{6.37} \\
& \hspace*{2.45cm}
\varepsilon, \varepsilon' \in \{+,-\}, \; j,k \in \cJ, \; z \in\bbC\backslash\bbR,   \no
\end{align}
where
\begin{align}
& \hatt e_{\varepsilon,\alpha,j}(\lambda)
= \big(\hatt e_{\varepsilon,\alpha,j,0}(\lambda),
\hatt e_{\varepsilon,\alpha,j,1}(\lambda)\big)^\top    \no \\
& \quad = \f{1}{\lambda - i}  e_{\varepsilon,\alpha,j}
= \f{1}{\lambda - i}  (e_{\varepsilon,\alpha,j,0}, e_{\varepsilon,\alpha,j,1})^\top   \no \\
& \quad = \f{1}{\lambda - i}  \big(- \varepsilon m_{\varepsilon,\alpha}(i,x_0)
[\varepsilon \Im(m_{\varepsilon,\alpha}(i,x_0))]^{-1/2} e_j,
\varepsilon [\varepsilon \Im(m_{\varepsilon,\alpha}(i,x_0))]^{-1/2} e_j\big)^\top,   \no \\
& \hspace*{7cm} \varepsilon \in \{+,-\}, \; j \in \cJ, \; \lambda \in \bbR.    \lb{6.38}
\end{align}
\end{lemma}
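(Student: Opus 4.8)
The plan is to reduce the left-hand side to a spectral integral against the $\cB\big(\cH^2\big)$-valued measure $\Omega_\alpha(\,\cdot\,,x_0)$ via the full-line spectral transform, and then perform three successive algebraic simplifications. For fixed $z\in\bbC\backslash\bbR$ the function $F(\lambda)=(z\lambda+1)/(\lambda-z)$ lies in $C(\bbR)$ (the denominator never vanishes, and $F(\lambda)\to z$ as $\lambda\to\pm\infty$), and by functional calculus $(zH+I)(H-zI)^{-1}=F(H)$. Since $U_\alpha(x_0)$ is unitary and intertwines $F(H)$ with $M_F$ (Theorem \ref{t2.10}), one obtains
\begin{align}
\begin{split}
&\big(\hatt\Psi_{\varepsilon,\alpha,j}(i,\,\cdot\,,x_0),(zH+I)(H-zI)^{-1}\hatt\Psi_{\varepsilon',\alpha,k}(i,\,\cdot\,,x_0)\big)_{L^2(\bbR;dx;\cH)} \\
&\quad = \big(U_\alpha(x_0)\hatt\Psi_{\varepsilon,\alpha,j}(i,\,\cdot\,,x_0),\,M_F\,U_\alpha(x_0)\hatt\Psi_{\varepsilon',\alpha,k}(i,\,\cdot\,,x_0)\big)_{L^2(\bbR;d\Omega_\alpha(\,\cdot\,,x_0);\cH^2)},
\end{split}
\end{align}
and, unravelling the inner product of the model space $L^2\big(\bbR;d\Omega_\alpha(\,\cdot\,,x_0);\cH^2\big)$, this equals $\int_\bbR d\big(U_\alpha(x_0)\hatt\Psi_{\varepsilon,\alpha,j},\,\Omega_\alpha(\lambda,x_0)\,U_\alpha(x_0)\hatt\Psi_{\varepsilon',\alpha,k}\big)_{\cH^2}\,F(\lambda)$.

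The second, and decisive, step is to identify $U_\alpha(x_0)\hatt\Psi_{\varepsilon,\alpha,j}(i,\,\cdot\,,x_0)=\hatt e_{\varepsilon,\alpha,j}$ of \eqref{6.38}. Because $\hatt\Psi_{\varepsilon,\alpha,j}$ carries the projection $P_{\varepsilon,x_0}$, it is supported on the half-line $(x_0,\varepsilon\infty)$, so the two components $\int_\bbR dx\,\theta_\alpha(\lambda,x,x_0)^*(\,\cdot\,)$ and $\int_\bbR dx\,\phi_\alpha(\lambda,x,x_0)^*(\,\cdot\,)$ of the transform \eqref{2.95} collapse to half-line integrals to which Lemma \ref{l6.3} applies directly. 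Evaluating \eqref{6.10} and \eqref{6.11} at $z=i$ (so $\Im(i)=1$) with $h=[\varepsilon\Im(m_{\varepsilon,\alpha}(i,x_0))]^{-1/2}e_j$ reproduces, component by component, precisely the two entries of $\hatt e_{\varepsilon,\alpha,j}(\lambda)=(\lambda-i)^{-1}e_{\varepsilon,\alpha,j}$ displayed in \eqref{6.38}: the $\phi_\alpha$-component yields the plain resolvent factor matching $e_{\varepsilon,\alpha,j,1}$, while the $\theta_\alpha$-component yields the factor with $m_{\varepsilon,\alpha}(i,x_0)$ matching $e_{\varepsilon,\alpha,j,0}$. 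This bookkeeping is the only place where the orientation of the left half-line $(\varepsilon=-)$ and the sign conventions in the normalization $[\varepsilon\Im(m_{\varepsilon,\alpha}(i,x_0))]^{-1/2}$ must be tracked with care, and it is the main (though essentially routine) obstacle. Substituting this identification produces the first equality of the lemma.

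The remaining two equalities are purely computational. Inserting $U_\alpha(x_0)\hatt\Psi_{\varepsilon,\alpha,j}=(\lambda-i)^{-1}e_{\varepsilon,\alpha,j}$ and using $\overline{(\lambda-i)^{-1}}\,(\lambda-i)^{-1}=(\lambda^2+1)^{-1}$ for $\lambda\in\bbR$ pulls the scalar factor out of the $\cH^2$ inner product, giving the second equality. Finally, the partial-fraction identity
\begin{equation}
\f{z\lambda+1}{(\lambda-z)(\lambda^2+1)}=\f{1}{\lambda-z}-\f{\lambda}{\lambda^2+1}, \quad z\in\bbC\backslash\bbR, \; \lambda\in\bbR,
\end{equation}
together with the Nevanlinna--Herglotz representation \eqref{2.71b} of $M_\alpha(\,\cdot\,,x_0)$ and the identity $C_\alpha(x_0)=\Re(M_\alpha(i,x_0))$ from \eqref{2.71c}, collapses the integral into $\big(e_{\varepsilon,\alpha,j},[M_\alpha(z,x_0)-\Re(M_\alpha(i,x_0))]e_{\varepsilon',\alpha,k}\big)_{\cH^2}$, which is the last equality. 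Throughout, the bounded invertibility of $\Im(m_{\pm,\alpha}(i,x_0))$ furnished by Theorem \ref{t3.3} guarantees that the vectors $e_{\varepsilon,\alpha,j}\in\cH^2$ in \eqref{6.38} are well defined.
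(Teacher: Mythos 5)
Your proof is correct and takes essentially the same route as the paper's: both pass to the model space $L^2\big(\bbR;d\Omega_{\alpha}(\, \cdot \,,x_0);\cH^2\big)$ via the full-line spectral transform (you invoke Theorem \ref{t2.10}, the paper cites \eqref{2.73}, \eqref{2.74} — the same machinery), both identify the transforms of $\hatt \Psi_{\varepsilon,\alpha,j}(i,\, \cdot \,,x_0)$ by reducing to half-line integrals and applying \eqref{6.10}, \eqref{6.11} of Lemma \ref{l6.3} at $z=i$, and both conclude with the Nevanlinna--Herglotz representation \eqref{2.71b}, \eqref{2.71c}. The partial-fraction identity you write out explicitly is precisely what underlies the paper's final step, so the two arguments coincide in substance.
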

\begin{proof}
The first two equalities in \eqref{6.37} follow from \eqref{2.73}, \eqref{2.74} upon introducing
$\hatt e_{\varepsilon,\alpha,j}(\cdot) = \big(\hatt e_{\varepsilon,\alpha,j,0}(\cdot),
\hatt e_{\varepsilon,\alpha,j,1}(\cdot)\big)^\top$, where
\begin{align}
& \hatt e_{\varepsilon,\alpha,j,0}(\lambda) = \varepsilon \int_{x_0}^{\varepsilon \infty} dx \,
\theta_{\alpha}(\lambda,x,x_0)^* \psi_{\varepsilon,\alpha}(i,x,x_0)
[\varepsilon \Im(m_{\varepsilon,\alpha}(i,x_0))]^{-1/2} e_j     \no \\
& \hspace*{1.5cm} = - \varepsilon (\lambda - i)^{-1} m_{\varepsilon,\alpha}(i,x_0)
 [\varepsilon \Im(m_{\varepsilon,\alpha}(i,x_0))]^{-1/2} e_j,    \lb{6.39} \\
& \hatt e_{\varepsilon,\alpha,j,1}(\lambda) = \varepsilon \int_{x_0}^{\varepsilon \infty} dx \,
\phi_{\alpha}(\lambda,x,x_0)^* \psi_{\varepsilon,\alpha}(i,x,x_0)
[\varepsilon \Im(m_{\varepsilon,\alpha}(i,x_0))]^{-1/2} e_j     \no \\
& \hspace*{1.5cm} = \varepsilon (\lambda - i)^{-1}
[\varepsilon \Im(m_{\varepsilon,\alpha}(i,x_0))]^{-1/2} e_j ,    \lb{6.40} \\
& \hspace*{3.3cm} \varepsilon \in \{+,-\}, \; j \in \cJ, \; \lambda \in \bbR,   \no
\end{align}
and we employed \eqref{6.11}, \eqref{6.10} (with $z=i$) to arrive at \eqref{6.39},
\eqref{6.40}. The third equality in \eqref{6.37} follows from \eqref{2.71b}, \eqref{2.71c}.
\end{proof}

Next, further reducing the computation \eqref{6.37} to scalar products of the type
$(e_j, \cdots e_k)_{\cH}$, $j,k \in \cH$, naturally leads to a $2 \times 2$ block operator
\begin{equation}
M_{\alpha}^{Do} (\, \cdot \,,x_0) = \big(M_{\alpha,\ell,\ell'}^{Do} (\, \cdot \,,x_0)\big)_{0 
\leq \ell, \ell' \leq 1},
\end{equation}
where
\begin{align}
(e_j, M_{\alpha,0,0}^{Do} (z,x_0) e_k)_{\cH} &= \big(e_{-,\alpha,j}, [M_{\alpha}(z,x_0)
- \Re(M_{\alpha}(i,x_0)] e_{-,\alpha,k}\big)_{\cH^2},    \no \\
(e_j, M_{\alpha,0,1}^{Do} (z,x_0) e_k)_{\cH} &= \big(e_{-,\alpha,j}, [M_{\alpha}(z,x_0)
- \Re(M_{\alpha}(i,x_0)] e_{+,\alpha,k}\big)_{\cH^2},    \no \\
(e_j, M_{\alpha,1,0}^{Do} (z,x_0) e_k)_{\cH} &= \big(e_{+,\alpha,j}, [M_{\alpha}(z,x_0)
- \Re(M_{\alpha}(i,x_0)] e_{-,\alpha,k}\big)_{\cH^2},    \lb{6.41}  \\
(e_j, M_{\alpha,1,1}^{Do} (z,x_0) e_k)_{\cH} &= \big(e_{+,\alpha,j}, [M_{\alpha}(z,x_0)
- \Re(M_{\alpha}(i,x_0)] e_{+,\alpha,k}\big)_{\cH^2},    \no \\
& \hspace*{4.5cm} j,k \in \cJ, \; z \in\bbC\backslash\bbR.    \no
\end{align}

\begin{theorem} \lb{t6.6}
Assume Hypothesis \ref{h2.8}.~Then $M_{\alpha}^{Do}(\, \cdot \, ,x_0)$ is a
$\cB\big(\cH^2\big)$-valued \\ Nevanlinna--Herglotz function given by
\begin{align}
M_{\alpha}^{Do} (z,x_0) &= T_{\alpha}^* M_{\alpha}(z,x_0) T_{\alpha}
+ E_{\alpha}     \lb{6.42} \\
&= D_{\alpha} + \int_\bbR d\Omega_{\alpha}^{Do}(\lambda,x_0) \bigg[\f{1}{\lambda-z} -
\f{\lambda}{\lambda^2 + 1}\bigg], \quad z\in\bbC\backslash\bbR,    \lb{6.43} 
\end{align}
where the $2 \times 2$ block operators $T_{\alpha} \in \cB\big(\cH^2\big)$ and
$E_{\alpha} \in \cB\big(\cH^2\big)$ are defined by
\begin{align}
& T_{\alpha} = \begin{pmatrix} m_{-,\alpha}(i,x_0) [-\Im(m_{-,\alpha}(i,x_0))]^{-1/2}
& - m_{+,\alpha}(i,x_0) [\Im(m_{+,\alpha}(i,x_0))]^{-1/2}  \\
- [-\Im(m_{-,\alpha}(i,x_0))]^{-1/2} & [\Im(m_{+,\alpha}(i,x_0))]^{-1/2}
\end{pmatrix},     \lb{6.46} \\
& E_{\alpha} = \begin{pmatrix} 0 & E_{\alpha,0,1}  \\
E_{\alpha,1,0} & 0 \end{pmatrix} = E_{\alpha}^*,   \no \\
& E_{\alpha,0,1} = 2^{-1} [-\Im(m_{-,\alpha}(i,x_0))]^{-1/2}
[m_{-,\alpha}(-i,x_0) - m_{+,\alpha}(i,x_0)]    \no \\
& \hspace*{1.3cm} \times [\Im(m_{+,\alpha}(i,x_0))]^{-1/2},      \lb{6.47} \\
& E_{\alpha,1,0} =  2^{-1} [\Im(m_{+,\alpha}(i,x_0))]^{-1/2}
[m_{-,\alpha}(i,x_0) - m_{+,\alpha}(-i,x_0)]     \no \\
& \hspace*{1.3cm} \times [-\Im(m_{-,\alpha}(i,x_0))]^{-1/2},      \no
\end{align}
and $T_{\alpha}^{-1} \in \cB\big(\cH^2\big)$, with
\begin{align}
& \big(T_{\alpha}^{-1}\big)_{0,0} = [-\Im(m_{-,\alpha}(i,x_0))]^{1/2}
[m_{-,\alpha}(i,x_0) - m_{+,\alpha}(i,x_0)]^{-1},    \lb{6.47a}\\
& \big(T_{\alpha}^{-1}\big)_{0,1} = [-\Im(m_{-,\alpha}(i,x_0))]^{1/2}
[m_{-,\alpha}(i,x_0) - m_{+,\alpha}(i,x_0)]^{-1} m_{+,\alpha}(i,x_0),    \lb{6.48}\\
& \big(T_{\alpha}^{-1}\big)_{1,0} = [\Im(m_{+,\alpha}(i,x_0))]^{1/2}
[m_{-,\alpha}(i,x_0) - m_{+,\alpha}(i,x_0)]^{-1},   \lb{6.49} \\
& \big(T_{\alpha}^{-1}\big)_{1,1} = [\Im(m_{+,\alpha}(i,x_0))]^{1/2}
[m_{-,\alpha}(i,x_0) - m_{+,\alpha}(i,x_0)]^{-1} m_{-,\alpha}(i,x_0).     \lb{6.50}
\end{align}
In addition, $D_{\alpha} = \Re\big(M_{\alpha}^{Do} (i,x_0)\big) \in \cB\big(\cH^2\big)$,
and $\Omega_{\alpha}^{Do}(\, \cdot \, ,x_0)
= T_{\alpha}^* \Omega_{\alpha}(\, \cdot \, ,x_0) T_{\alpha}$ satisfy the analogs of
\eqref{A.42a}, \eqref{A.42b}.
\end{theorem}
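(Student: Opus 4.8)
The plan is to read the announced factorization directly off the block formulas \eqref{6.41} together with the definition \eqref{6.38} of the vectors $e_{\varepsilon,\alpha,j}$, and then to isolate the constant term by means of the explicit form of $M_\alpha(\pm i,x_0)$.

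First I would observe that the $2\times 2$ block operator $T_\alpha$ in \eqref{6.46} is precisely the operator whose columns list the families from \eqref{6.38}: writing $\{(e_j,0)^\top\}_{j\in\cJ}$ and $\{(0,e_j)^\top\}_{j\in\cJ}$ for the natural orthonormal basis of $\cH^2=\cH\oplus\cH$, a component-by-component comparison gives $T_\alpha(e_j,0)^\top=e_{-,\alpha,j}$ and $T_\alpha(0,e_j)^\top=e_{+,\alpha,j}$, $j\in\cJ$ (block index $0$ corresponding to $\varepsilon=-$ and $1$ to $\varepsilon=+$). Inserting this into \eqref{6.41} and using $(T_\alpha\xi,XT_\alpha\eta)_{\cH^2}=(\xi,T_\alpha^*XT_\alpha\eta)_{\cH^2}$ with $X=M_\alpha(z,x_0)-\Re(M_\alpha(i,x_0))$, every matrix element $(e_j,M_{\alpha,\ell,\ell'}^{Do}(z,x_0)e_k)_\cH$ is exhibited as the $(\ell,\ell')$-block, $(j,k)$-entry of $T_\alpha^*[M_\alpha(z,x_0)-\Re(M_\alpha(i,x_0))]T_\alpha$. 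Hence
\begin{equation*}
M_\alpha^{Do}(z,x_0)=T_\alpha^*[M_\alpha(z,x_0)-\Re(M_\alpha(i,x_0))]T_\alpha=T_\alpha^*M_\alpha(z,x_0)T_\alpha+E_\alpha,
\end{equation*}
where $E_\alpha:=-T_\alpha^*\Re(M_\alpha(i,x_0))T_\alpha$ is manifestly self-adjoint and $z$-independent. This already yields \eqref{6.42}.

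Next I would compute $E_\alpha$ explicitly. Using the Nevanlinna--Herglotz symmetry $m_{\pm,\alpha}(z,x_0)^*=m_{\pm,\alpha}(\ol z,x_0)$ (cf.\ \eqref{2.61}) one has $\Re(M_\alpha(i,x_0))=2^{-1}[M_\alpha(i,x_0)+M_\alpha(-i,x_0)]$, so that $E_\alpha=-2^{-1}T_\alpha^*[M_\alpha(i,x_0)+M_\alpha(-i,x_0)]T_\alpha$. Substituting the explicit entries \eqref{2.71}--\eqref{2.71a} of $M_\alpha(\pm i,x_0)$ and multiplying out the four blocks of $T_\alpha^*(\cdots)T_\alpha$ -- with the abbreviations $m_\pm:=m_{\pm,\alpha}(i,x_0)$ and the positive square roots $[\pm\Im(m_\pm)]^{-1/2}$ -- the Weyl--Titchmarsh algebra (notably $W(\pm i)=m_-(\pm i)-m_+(\pm i)$ and the identities expressing $M_{\alpha,1,1}$ as $m_\pm W^{-1}m_\mp$) forces the two diagonal blocks to cancel and produces exactly the off-diagonal expressions \eqref{6.47}. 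I expect this block computation to be the main obstacle: it is routine but lengthy, and it requires careful bookkeeping of which $m$-functions are evaluated at $+i$ versus $-i$ and of the noncommutativity of the square-root factors with $m_\pm$.

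The remaining assertions are comparatively direct. Bounded invertibility of $T_\alpha$ reduces to that of $W(i)=m_--m_+$: since $-m_{-,\alpha}(\cdot,x_0)$ and $m_{+,\alpha}(\cdot,x_0)$ are $\cB(\cH)$-valued Nevanlinna--Herglotz functions, one has $\Im(m_-)\le 0\le\Im(m_+)$, both boundedly invertible by Theorem \ref{t3.3}, so $\Im(W(i))=\Im(m_-)-\Im(m_+)$ is bounded above by $-\delta I_\cH$ for some $\delta>0$ and $W(i)^{-1}\in\cB(\cH)$ follows. With this in hand I would verify \eqref{6.47a}--\eqref{6.50} simply by checking $T_\alpha^{-1}T_\alpha=T_\alpha T_\alpha^{-1}=I_{\cH^2}$ blockwise, each of the four products collapsing through $W(i)^{-1}(m_--m_+)=I_\cH$. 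Finally, since $T_\alpha$ and $E_\alpha$ are $z$-independent with $E_\alpha=E_\alpha^*$, one has $\Im(M_\alpha^{Do}(z,x_0))=T_\alpha^*\Im(M_\alpha(z,x_0))T_\alpha\ge 0$ for $\Im(z)>0$, together with analyticity on $\bbC\bs\bbR$; hence $M_\alpha^{Do}(\cdot,x_0)$ is $\cB(\cH^2)$-valued Nevanlinna--Herglotz and admits the representation \eqref{6.43}. Evaluating at $z=i$ (where the integrand $\frac{1}{\lambda-i}-\frac{\lambda}{\lambda^2+1}=\frac{i}{\lambda^2+1}$ is purely imaginary) identifies $D_\alpha=\Re(M_\alpha^{Do}(i,x_0))$, and applying the Stieltjes inversion formula \eqref{2.71e} to $\Im(M_\alpha^{Do})=T_\alpha^*\Im(M_\alpha)T_\alpha$ gives $\Omega_\alpha^{Do}(\cdot,x_0)=T_\alpha^*\Omega_\alpha(\cdot,x_0)T_\alpha$. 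The analogs of \eqref{A.42a}, \eqref{A.42b} for $\Omega_\alpha^{Do}$ then follow from the corresponding properties \eqref{2.71d} of $\Omega_\alpha$, using boundedness of both $T_\alpha$ and $T_\alpha^{-1}$.
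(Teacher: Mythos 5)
Your structural reduction is correct and is in fact cleaner than the paper's own argument. The paper proves \eqref{6.42} by expanding matrix elements of both sides and comparing, carrying this out explicitly only for the $(0,0)$-entry (cf.\ \eqref{6.51}) and declaring the remaining entries analogous; your observation that $T_\alpha(e_j,0)^\top=e_{-,\alpha,j}$ and $T_\alpha(0,e_j)^\top=e_{+,\alpha,j}$ converts \eqref{6.41} at once into the operator identity $M_\alpha^{Do}(z,x_0)=T_\alpha^*[M_\alpha(z,x_0)-\Re(M_\alpha(i,x_0))]T_\alpha$, after which analyticity, the Herglotz property, the representation \eqref{6.43} (by conjugating \eqref{2.71b}), and $\Omega_\alpha^{Do}=T_\alpha^*\Omega_\alpha T_\alpha$ all come for free; your verification of $T_\alpha^{-1}$ and the invertibility of $W(i)$ likewise match what the paper leaves as ``clear''.

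There is, however, a genuine problem with the one step you asserted without computing, namely that $E_\alpha:=-T_\alpha^*\Re(M_\alpha(i,x_0))T_\alpha$ has vanishing diagonal blocks and off-diagonal blocks \emph{equal to} \eqref{6.47}. Carrying out that block computation (using $\Re(M_\alpha(i,x_0))=\tfrac12[M_\alpha(i,x_0)+M_\alpha(-i,x_0)]$, the entries \eqref{2.71}--\eqref{2.71a}, and $m_{-}W^{-1}m_{+}=m_{+}W^{-1}m_{-}$) one finds that the diagonal blocks of $T_\alpha^*\Re(M_\alpha(i,x_0))T_\alpha$ do vanish, but its off-diagonal blocks are \emph{exactly} $E_{\alpha,0,1}$ and $E_{\alpha,1,0}$ of \eqref{6.47}; that is, $T_\alpha^*\Re(M_\alpha(i,x_0))T_\alpha=+E_\alpha$, so your $E_\alpha$ is the \emph{negative} of \eqref{6.47}. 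A scalar check settles the sign beyond doubt: take $\dim\cH=1$, $m_{+,\alpha}(i,x_0)=i$, $m_{-,\alpha}(i,x_0)=1-i$, so $W(i)=1-2i$ and
\begin{equation*}
T_\alpha=\left(\begin{smallmatrix}1-i&-i\\-1&1\end{smallmatrix}\right),\qquad
\Re(M_\alpha(i,x_0))=\left(\begin{smallmatrix}1/5&1/10\\1/10&-1/5\end{smallmatrix}\right),\qquad
T_\alpha^*\,\Re(M_\alpha(i,x_0))\,T_\alpha=\left(\begin{smallmatrix}0&1/2\\1/2&0\end{smallmatrix}\right),
\end{equation*}
while \eqref{6.47} gives $E_{\alpha,0,1}=\tfrac12[(1+i)-i]=\tfrac12=E_{\alpha,1,0}$. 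Consequently, starting from \eqref{6.41} and \eqref{6.38} (which both you and the paper take as the definition of $M_\alpha^{Do}$), the identity one actually obtains is $M_\alpha^{Do}(z,x_0)=T_\alpha^*M_\alpha(z,x_0)T_\alpha-E_\alpha$, with $E_\alpha$ as in \eqref{6.47}. In other words, the theorem as printed contains a sign inconsistency between \eqref{6.41} and \eqref{6.42}/\eqref{6.47}; the paper's own proof does not detect it because the only entry it verifies is the $(0,0)$ one, where $E_{\alpha,0,0}=0$ renders the sign of $E_\alpha$ invisible, and your proposal inherits exactly this blind spot by deferring the ``routine but lengthy'' computation whose outcome you predicted incorrectly. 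A complete and correct proof along your (otherwise sound) lines must end with $M_\alpha^{Do}=T_\alpha^*M_\alpha T_\alpha-E_\alpha$, equivalently with the signs inside \eqref{6.47} reversed.
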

\begin{proof}
While \eqref{6.43} is clear from \eqref{6.42}, and similarly, \eqref{6.47a}--\eqref{6.50} is clear from
\eqref{6.46}, the main burden of proof consists in verifying \eqref{6.42}, given \eqref{6.46},
\eqref{6.47}. This can be achieved after straightforward, yet tedious computations.
To illustrate the nature of this computations we just focus on the $(0,0)$-entry of the
$2 \times 2$ block operator \eqref{6.42} and consider the term (cf.\ the first equation in
\eqref{6.41}), $(e_{-,\alpha,j}, M_{\alpha}(z,x_0) e_{-,\alpha,k})_{\cH^2}$, temporarily
suppressing $x_0$ and $\alpha$ for simplicity:
\begin{align}
& (e_{-,\alpha,j}, M_{\alpha}(z,x_0) e_{-,\alpha,k})_{\cH^2}
= \bigg(\bigg(\begin{smallmatrix} m_-(i) [- \Im(m_-(i))]^{-1/2} e_j \\
- [- \Im(m_-(i))]^{-1/2} e_j\end{smallmatrix}\bigg),   \no \\
& \qquad \times \bigg(\begin{smallmatrix}
[m_-(z) - m_+(z)]^{-1}
& 2^{-1} [m_-(z) - m_+(z)]^{-1} [m_-(z) + m_+(z)] \\
2^{-1} [m_-(z) + m_+(z)] [m_-(z) - m_+(z)]^{-1}
\end{smallmatrix} \bigg)   \no \\
& \qquad \times \bigg(\bigg(\begin{smallmatrix} m_-(i) [- \Im(m_-(i))]^{-1/2} e_j \\
- [- \Im(m_-(i))]^{-1/2} e_j\end{smallmatrix}\bigg)\bigg)_{\cH^2}    \no \\
& \quad = \big(m_-(i) [- \Im(m_-(i))]^{-1/2} e_j, [m_-(z) - m_+(z)]^{-1}   \no \\
& \hspace*{9mm} \times m_-(i) [- \Im(m_-(i))]^{-1/2} e_k\big)_{\cH}    \no \\
& \qquad - 2^{-1} \big(m_-(i) [- \Im(m_-(i))]^{-1/2} e_j, [m_-(z) - m_+(z)]^{-1} [m_-(z) + m_+(z)]   \no \\
& \hspace*{1.7cm} \times [- \Im(m_-(i))]^{-1/2} e_k\big)_{\cH}    \no \\
& \qquad - 2^{-1} \big([- \Im(m_-(i))]^{-1/2} e_j, [m_-(z) + m_+(z)] [m_-(z) - m_+(z)]^{-1}   \no \\
& \hspace*{1.7cm} \times m_-(i) [- \Im(m_-(i))]^{-1/2} e_k\big)_{\cH}    \no \\
& \qquad + \big([- \Im(m_-(i))]^{-1/2} e_j, m_{\mp}(z)] [m_-(z) - m_+(z)]^{-1} m_{\pm}(z)   \no \\
& \hspace*{1.7cm} \times [- \Im(m_-(i))]^{-1/2} e_k\big)_{\cH}  \no \\
& \quad = \big(e_j,  [- \Im(m_-(i))]^{-1/2} m_-(-i) [m_-(z) - m_+(z)]^{-1}   \no \\
& \hspace*{9mm} \times m_-(i) [- \Im(m_-(i))]^{-1/2} e_k\big)_{\cH}    \no \\
& \qquad - 2^{-1} \big(e_j, [- \Im(m_-(i))]^{-1/2} m_-(-i)
[m_-(z) - m_+(z)]^{-1} [m_-(z) + m_+(z)]   \no \\
& \hspace*{1.7cm} \times [- \Im(m_-(i))]^{-1/2} e_k\big)_{\cH}    \no \\
& \qquad - 2^{-1} \big(e_j, [- \Im(m_-(i))]^{-1/2} [m_-(z) + m_+(z)] [m_-(z) - m_+(z)]^{-1}   \no \\
& \hspace*{1.7cm} \times m_-(i) [- \Im(m_-(i))]^{-1/2} e_k\big)_{\cH}    \no \\
& \qquad + \big(e_j, [- \Im(m_-(i))]^{-1/2} m_{\mp}(z)] [m_-(z) - m_+(z)]^{-1} m_{\pm}(z)   \no \\
& \hspace*{1.7cm} \times [- \Im(m_-(i))]^{-1/2} e_k\big)_{\cH}, \quad z\in\bbC\backslash\bbR.
\lb{6.51}
\end{align}
Explicitly computing $(e_j, [T_{\alpha}^* M_{\alpha}(z,x_0)T_{\alpha}]_{0,0} e_k)_{\cH}$,
given $T_{\alpha}$ in \eqref{6.46} yields the same expression as in \eqref{6.51}. Similarly,
one verifies that
\begin{equation}
(e_{-,\alpha,j}, \Re(M_{\alpha}(i,x_0)) e_{-,\alpha,k})_{\cH^2} = 0,
\end{equation}
verifying the  $(0,0)$-entry of \eqref{6.42}. The remaining three entries are verified analogously.
\end{proof}

Combining Lemma \ref{l6.5} and Theorem \ref{t6.6} then yields the following result:

\begin{theorem} \lb{t6.7}
Assume Hypothesis \ref{h2.8}.~Then 
$M_{H, \cN_i}^{Do} (\cdot) = \big(M_{H, \cN_i,\ell,\ell'}^{Do} (\cdot)\big)_{0 \leq \ell, \ell' \leq 1}$, 
explicitly given by \eqref{6.27}--\eqref{6.36}, is of the form,
\begin{align}
& M_{H, \cN_i,0,0}^{Do} (z) = \sum_{j,k \in \cJ}
(e_j, M_{\alpha,0,0}^{Do}(z,x_0) e_k)_{\cH}  \no \\
& \hspace*{3.1cm} \times
\big(\hatt \Psi_{-,\alpha,k}(i,\, \cdot \,,x_0), \, \cdot \, \big)_{L^2(\bbR; dx; \cH)}
\hatt \Psi_{-,\alpha,j}(i,\, \cdot \,,x_0),     \lb{6.53} \\
& M_{H, \cN_i,0,1}^{Do} (z) = \sum_{j,k \in \cJ}
(e_j, M_{\alpha,0,1}^{Do}(z,x_0) e_k)_{\cH}   \no \\
& \hspace*{3.1cm} \times
\big(\hatt \Psi_{+,\alpha,k}(i,\, \cdot \,,x_0), \, \cdot \, \big)_{L^2(\bbR; dx; \cH)}
\hatt \Psi_{-,\alpha,j}(i,\, \cdot \,,x_0),     \lb{6.54} \\
& M_{H, \cN_i,1,0}^{Do} (z) = \sum_{j,k \in \cJ}
(e_j, M_{\alpha,1,0}^{Do}(z,x_0) e_k)_{\cH}    \no \\
& \hspace*{3.1cm} \times
\big(\hatt \Psi_{-,\alpha,k}(i,\, \cdot \,,x_0), \, \cdot \, \big)_{L^2(\bbR; dx; \cH)}
\hatt \Psi_{+,\alpha,j}(i,\, \cdot \,,x_0),     \lb{6.55} \\
& M_{H, \cN_i,1,1}^{Do} (z) = \sum_{j,k \in \cJ}
(e_j, M_{\alpha,1,1}^{Do}(z,x_0) e_k)_{\cH}    \no \\
& \hspace*{3.1cm} \times
\big(\hatt \Psi_{+,\alpha,k}(i,\, \cdot \,,x_0), \, \cdot \, \big)_{L^2(\bbR; dx; \cH)}
\hatt \Psi_{+,\alpha,j}(i,\, \cdot \,,x_0),    \lb{6.56} \\
& \hspace*{8.95cm} z\in\bbC\backslash\bbR,     \no
\end{align}
with $M_{\alpha}^{Do}(\, \cdot \,,x_0)$ given by \eqref{6.42}--\eqref{6.47}.
\end{theorem}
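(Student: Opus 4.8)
The plan is to read Theorem \ref{t6.7} as a bookkeeping synthesis of the explicit block formulas \eqref{6.33}--\eqref{6.36} with the two results immediately preceding it, Lemma \ref{l6.5} and Theorem \ref{t6.6}; no new analytic input is needed, only the identification of each scalar coefficient in the block entries with the appropriate matrix element of $M_{\alpha}^{Do}(\,\cdot\,,x_0)$.

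First I would fix the dictionary between the $2\times 2$ block indices $(\ell,\ell')\in\{0,1\}^2$ in the representation \eqref{6.32} and the half-line labels $(\varepsilon,\varepsilon')\in\{-,+\}^2$, namely $\ell=0\leftrightarrow\varepsilon=-$ and $\ell=1\leftrightarrow\varepsilon=+$, as forced by the decomposition \eqref{6.20} and the orthonormal basis \eqref{6.22} for $\cN_z$. Under this dictionary, the $(\ell,\ell')$ entry displayed in \eqref{6.33}--\eqref{6.36} is a double sum over $j,k\in\cJ$ whose scalar coefficient is exactly the $L^2(\bbR;dx;\cH)$ inner product $\big(\hatt\Psi_{\varepsilon,\alpha,j}(i,\,\cdot\,,x_0),(zH+I)(H-zI)^{-1}\hatt\Psi_{\varepsilon',\alpha,k}(i,\,\cdot\,,x_0)\big)$, multiplied by the rank-one operator $\big(\hatt\Psi_{\varepsilon',\alpha,k}(i,\,\cdot\,,x_0),\,\cdot\,\big)\,\hatt\Psi_{\varepsilon,\alpha,j}(i,\,\cdot\,,x_0)$.

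Next I would apply Lemma \ref{l6.5}: by \eqref{6.37} each such inner-product coefficient equals $\big(e_{\varepsilon,\alpha,j},[M_{\alpha}(z,x_0)-\Re(M_{\alpha}(i,x_0))]e_{\varepsilon',\alpha,k}\big)_{\cH^2}$, and by the defining relations \eqref{6.41} this $\cH^2$ pairing is precisely $(e_j,M_{\alpha,\ell,\ell'}^{Do}(z,x_0)e_k)_{\cH}$ for the block index pair $(\ell,\ell')$ matched to $(\varepsilon,\varepsilon')$ above. Substituting these four evaluations back into \eqref{6.33}--\eqref{6.36} reproduces the rank-one expansions \eqref{6.53}--\eqref{6.56}, with the outer vector/covector structure left unchanged. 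Finally, Theorem \ref{t6.6} supplies the claimed closed form \eqref{6.42}--\eqref{6.47} of $M_{\alpha}^{Do}(\,\cdot\,,x_0)$, which completes the assertion.

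The only delicate point, and the closest thing to an obstacle, is the consistency of signs: the labels $\varepsilon,\varepsilon'$ are absorbed into the vectors $e_{\varepsilon,\alpha,j}$ of \eqref{6.38}, which carry the factors $\pm m_{\varepsilon,\alpha}(i,x_0)$ and $[\pm\Im(m_{\varepsilon,\alpha}(i,x_0))]^{-1/2}$, so one must verify that the four sign choices built into \eqref{6.38} are exactly those entering, through \eqref{6.22}, the corresponding block of \eqref{6.33}--\eqref{6.36}. Since Lemma \ref{l6.5} has already packaged all of these signs inside $e_{\varepsilon,\alpha,j}$, this verification collapses to a case-by-case match of the four pairs $(\varepsilon,\varepsilon')$, which is immediate and requires no computation beyond the definitions.
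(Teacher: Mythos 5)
Your proposal is correct and follows exactly the paper's own route: the paper gives no separate proof beyond the sentence ``Combining Lemma \ref{l6.5} and Theorem \ref{t6.6} then yields the following result,'' and your argument spells out precisely that combination --- matching the block indices $(\ell,\ell')$ to the half-line labels $(\varepsilon,\varepsilon')$, substituting \eqref{6.37} and the definitions \eqref{6.41} into \eqref{6.33}--\eqref{6.36}, and invoking Theorem \ref{t6.6} for the closed form \eqref{6.42}--\eqref{6.47}. Your closing remark on the sign bookkeeping in \eqref{6.38} versus \eqref{6.22} is a sound (if implicit in the paper) consistency check and does not alter the argument.
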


\begin{remark} \lb{r6.8}
Combining Corollary \ref{c5.8} and Theorem \ref{t6.7} proves that the entire spectral information
for $H$, contained in the corresponding family of spectral projections
$\{E_H(\lambda)\}_{\lambda \in \bbR}$ in $L^2(\bbR; dx; \cH)$, is
already encoded in the operator-valued measure
$\{\Omega_{H,\cN_i}^{Do}(\lambda)\}_{\lambda \in \bbR}$ in
$\cN_i$ (including multiplicity properties of the spectrum of $H$). In addition, 
invoking Theorem \ref{t6.6} shows that for any fixed $\alpha = \alpha^* \in \cB(\cH)$, 
$x_0 \in \bbR$, the entire spectral information for $H$ is already contained in
$\{\Omega_{\alpha}^{Do}(\lambda,x_0)\}_{\lambda \in \bbR}$ in $\cH^2$. 
\hfill $\diamond$ 
\end{remark}

\appendix
\section{Basic Facts on Bounded Operator-Valued Nevanlinna--Herglotz Functions} \lb{sA}
\setcounter{theorem}{0}
\setcounter{equation}{0}

We review some basic facts on (bounded) operator-valued Nevanlinna--Herglotz
functions (also called Nevanlinna, Pick, $R$-functions, etc.),
frequently employed in the bulk of this paper. For additional details concerning the material in this appendix we refer to \cite{GWZ13}, \cite{GWZ13b}.

Throughout this appendix, $\cH$ is a separable, complex Hilbert space with inner product denoted by $(\, \cdot \,,\, \cdot \,)_{\cH}$, identity operator abbreviated by $I_{\cH}$. We also denote
$\bbC_{\pm} = \{z \in \bbC \,| \pm \Im(z) > 0\}$.

\begin{definition}\label{dA.4}
The map $M: \bbC_+ \rightarrow \cB(\cH)$ is called a bounded
operator-valued Nevanlinna--Herglotz function on $\cH$ (in short, a bounded Nevanlinna--Herglotz operator on $\cH$) if $M$ is analytic on $\bbC_+$ and $\Im (M(z))\geq 0$ for all $z\in \bbC_+$.
\end{definition}

Here we follow the standard notation
\begin{equation} \lb{A.37}
\Im (M) = (M-M^*)/(2i),\quad \Re (M) = (M+M^*)/2, \quad M \in \cB(\cH).
\end{equation}

Note that $M$ is a bounded Nevanlinna--Herglotz operator if and only if the scalar-valued functions
$(u,Mu)_\cH$ are Nevanlinna--Herglotz for all $u\in\cH$.

As in the scalar case one usually extends $M$ to $\bbC_-$ by
reflection, that is, by defining
\begin{equation}
M(z)=M(\overline z)^*, \quad z\in \bbC_-.   \lb{A.36}
\end{equation}
Hence $M$ is analytic on $\bbC\backslash\bbR$, but $M\big|_{\bbC_-}$
and $M\big|_{\bbC_+}$, in general, are not analytic
continuations of each other.

In contrast to the scalar case, one cannot generally expect strict
inequality in $\Im(M(\cdot))\geq 0$. However, the kernel of $\Im(M(\cdot))$
has the following simple properties recorded in \cite[Lemma\ 5.3]{GT00} (whose proof was kindly communicated to us by Dirk Buschmann) in the matrix-valued context. Below we indicate that the proof extends to the present infinite-dimensional situation (see also \cite[Proposition\ 1.2\,$(ii)$]{DM97} for additional results of this kind):

\begin{lemma} \lb{lA.5}
Let $M(\cdot)$ be a $\cB(\cH)$-valued Nevanlinna--Herglotz function.
Then the kernel $\cH_0 = \ker(\Im(M(z)))$ is independent of $z\in\bbC \backslash \bbR$.
Consequently, upon decomposing $\cH = \cH_0 \oplus \cH_1$,
$\cH_1 = \cH_0^\bot$, $\Im(M(\cdot))$ takes on the form
\begin{equation}
\Im(M(z))= \begin{pmatrix} 0 & 0 \\ 0 & N_1(z) \end{pmatrix},
\quad z \in \bbC_+,     \lb{A.38}
\end{equation}
where $N_1(\cdot) \in \cB(\cH_1)$ satisfies
\begin{equation}
N_1(z) \geq 0, \quad \ker(N_1) = \{0\}, \quad z\in\bbC_+.    \lb{A.39}
\end{equation}
\end{lemma}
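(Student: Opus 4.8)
The plan is to reduce the problem to the scalar case by testing against individual vectors and then to exploit the rigidity of scalar Nevanlinna--Herglotz functions. First I would dispose of the lower half-plane: the reflection convention \eqref{A.36} gives $\Im(M(z)) = - \Im(M(\ol z))$ for $z \in \bbC_-$, so $\ker(\Im(M(z))) = \ker(\Im(M(\ol z)))$ with $\ol z \in \bbC_+$; it therefore suffices to prove that $\ker(\Im(M(z)))$ is independent of $z \in \bbC_+$. For such $z$ the operator $\Im(M(z))$ is bounded, self-adjoint, and nonnegative, so (writing $A = \Im(M(z))$ and factoring $A = (A^{1/2})^2$) one has the elementary identity $\ker(A) = \{u \in \cH \,|\, (u, A u)_{\cH} = 0\}$. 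Hence $u \in \ker(\Im(M(z)))$ if and only if $\Im\big((u, M(z) u)_{\cH}\big) = 0$.

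The crux of the argument is the following dichotomy, which I regard as the main (though standard) step. For fixed $u \in \cH$ the scalar function $f_u(z) = (u, M(z) u)_{\cH}$ is a scalar Nevanlinna--Herglotz function on $\bbC_+$, as noted immediately after Definition \ref{dA.4}. Its imaginary part $\Im(f_u)$ is harmonic and nonnegative on $\bbC_+$, so by the minimum principle for harmonic functions either $\Im(f_u) \equiv 0$ on $\bbC_+$ (in which case $f_u$ is a real constant) or $\Im(f_u)(z) > 0$ for every $z \in \bbC_+$. Consequently, for each fixed $u$ the set $\{z \in \bbC_+ \,|\, \Im(f_u)(z) = 0\}$ is either empty or all of $\bbC_+$, and therefore
\begin{equation*}
\ker(\Im(M(z))) = \{u \in \cH \,|\, \Im(f_u) \equiv 0 \text{ on } \bbC_+\}, \quad z \in \bbC_+,
\end{equation*}
a set manifestly independent of $z$. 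Combined with the first paragraph this shows that $\cH_0 = \ker(\Im(M(z)))$ is the same subspace for all $z \in \bbC \backslash \bbR$.

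It remains to read off the block structure \eqref{A.38}--\eqref{A.39}. Fix $z \in \bbC_+$ and set $A = \Im(M(z)) \geq 0$; by the above $\ker(A) = \cH_0$. Since $A$ is self-adjoint, $\cH_0 = \ker(A)$ and $\cH_1 = \cH_0^\bot = \ol{\ran(A)}$ are reducing subspaces for $A$, and $A P_{\cH_0} = 0$. Thus, with respect to $\cH = \cH_0 \oplus \cH_1$, the $(0,0)$- and $(1,0)$-blocks of $A$ vanish because $A$ annihilates $\cH_0$, while the $(0,1)$-block vanishes by self-adjointness $A = A^*$; this yields exactly \eqref{A.38} with $N_1(z) = A\big|_{\cH_1} \in \cB(\cH_1)$. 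Finally, $N_1(z) \geq 0$ as the restriction of the nonnegative operator $A$ to an invariant subspace, and $\ker(N_1(z)) = \{0\}$ since any $u \in \cH_1$ with $N_1(z) u = 0$ satisfies $A u = 0$, whence $u \in \cH_0 \cap \cH_1 = \{0\}$. As $\cH_0$ does not depend on $z$, this decomposition is uniform in $z \in \bbC_+$, completing the proof.
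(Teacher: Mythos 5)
Your proof is correct and follows essentially the same route as the paper: both reduce to the scalar Nevanlinna--Herglotz functions $z \mapsto (u, M(z)u)_{\cH}$ and invoke the rigidity that such a function taking a real value at an interior point of $\bbC_+$ must be a real constant (the paper phrases this as ``a Herglotz function with a zero in $\bbC_+$ is constant,'' you via the minimum principle for the nonnegative harmonic function $\Im\big((u,M(\cdot)u)_{\cH}\big)$, which is the same fact). The only cosmetic difference is that you dispose of $\bbC_-$ at the outset using $\Im(M(z)) = -\Im(M(\ol z))$, while the paper carries the constancy of the scalar function over to $\bbC_-$ via the reflection convention \eqref{A.36}.
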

\begin{proof}
Pick $z_0 \in \bbC \backslash \bbR$, and suppose $f_0 \in \ker(\Im(M(z_0)))$. Introducing
$m(z) = (f_0,M(z) f_0)_{\cH}$, $z \in \bbC \backslash \bbR$, $m(\cdot)$ is a scalar
Nevanlinna--Herglotz function and $m(z_0) \in \bbR$. Hence the Nevanlinna--Herglotz
function $m(z) - m(z_0)$ has a zero at $z=z_0$, and thus must be a real-valued constant,
$m(z) = m(z_0)$, $z \in \bbC \backslash \bbR$. Since $(f_0, M(z)^* f_0)_{\cH} =
\ol{(f_0, M(z) f_0)_{\cH}} = \ol{m(z)} = m(z_0) \in \bbR$, $z \in \bbC \backslash \bbR$, one concludes that $(f_0, \Im(M(z)) f_0)_{\cH} = \pm \big\|[\pm \Im(M(z))]^{1/2} f_0\big\|_{\cH}^2 = 0$,
$z \in \bbC_{\pm}$, that is,
\begin{equation}
f_0 \in \ker\big([\pm \Im(M(z))]^{1/2}\big) = \ker(\Im(M(z))), \quad z \in \bbC_{\pm},
\end{equation}
and hence $\ker(M(z_0) \subseteq \ker(M(z))$, $z \in \bbC \backslash \bbR$. Interchanging
the role of $z_0$ and $z$ finally yields $\ker(M(z_0) = \ker(M(z))$, $z \in \bbC \backslash \bbR$.
\end{proof}

Next we recall the definition of a bounded operator-valued measure (see, also
\cite[p.\ 319]{Be68}, \cite{MM04}, \cite{PR67}):

\begin{definition} \lb{dA.6}
Let $\cH$ be a separable, complex Hilbert space.
A map $\Sigma:\mathfrak{B}(\bbR) \to\cB(\cH)$, with $\mathfrak{B}(\bbR)$ the
Borel $\sigma$-algebra on $\bbR$, is called a {\it bounded, nonnegative,
operator-valued measure} if the following conditions $(i)$ and $(ii)$ hold: \\
$(i)$ $\Sigma (\emptyset) =0$ and $0 \leq \Sigma(B) \in \cB(\cH)$ for all
$B \in \mathfrak{B}(\bbR)$. \\
$(ii)$ $\Sigma(\cdot)$ is strongly countably additive (i.e., with respect to the
strong operator  \hspace*{5mm} topology in $\cH$), that is,
\begin{align}
& \Sigma(B) = \slim_{N\to \infty} \sum_{j=1}^N \Sigma(B_j)   \lb{A.40} \\
& \quad \text{whenever } \, B=\bigcup_{j\in\bbN} B_j, \, \text{ with } \,
B_k\cap B_{\ell} = \emptyset \, \text{ for } \, k \neq \ell, \;
B_k \in \mathfrak{B}(\bbR), \; k, \ell \in \bbN.    \no
\end{align}
$\Sigma(\cdot)$ is called an {\it $($operator-valued\,$)$ spectral
measure} (or an {\it orthogonal operator-valued measure}) if additionally the following
condition $(iii)$ holds: \\
$(iii)$ $\Sigma(\cdot)$ is projection-valued (i.e., $\Sigma(B)^2 = \Sigma(B)$,
$B \in \mathfrak{B}(\bbR)$) and $\Sigma(\bbR) = I_{\cH}$. \\
$(iv)$ Let $f \in \cH$ and $B \in \mathfrak{B}(\bbR)$. Then the vector-valued
measure $\Sigma(\cdot) f$ has {\it finite variation on $B$}, denoted by
$V(\Sigma f;B)$, if
\begin{equation}
V(\Sigma f; B) = \sup\bigg\{\sum_{j=1}^N \|\Sigma(B_j)f\|_{\cH} \bigg\} < \infty,
\end{equation}
where the supremum is taken over all finite sequences $\{B_j\}_{1\leq j \leq N}$
of pairwise disjoint subsets on $\bbR$ with $B_j \subseteq B$, $1 \leq j \leq N$.
In particular, $\Sigma(\cdot) f$ has {\it finite total variation} if
$V(\Sigma f;\bbR) < \infty$.
\end{definition}

We recall that due to monotonicity considerations, taking the limit
in the strong operator topology in \eqref{A.40} is equivalent to taking the limit with respect to the weak operator topology in $\cH$.

For relevant material in connection with the following result we refer the reader, for instance, to \cite{AL95}, \cite{AN75}, \cite{AN76}, \cite{ABT11},
\cite[Sect.\ VI.5,]{Be68}, \cite[Sect.\ I.4]{Br71}, \cite{Bu97}, \cite{Ca76},
\cite{De62}, \cite{DM91}--\cite{DM97}, \cite{HS98}, \cite{KO77}, \cite{KO78}, \cite{MM02}, \cite{MM04}, \cite{Na74}, \cite{Na77}, \cite{NA75}, \cite{Na87}, \cite{Sh71}, \cite{Ts92}, 
and the detailed bibliography in \cite{GWZ13b}.

\begin{theorem}
\rm {(\cite{AN76}, \cite[Sect.\ I.4]{Br71}, \cite{Sh71}.)} \lb{tA.7}
Let $M$ be a bounded operator-valued Nevanlinna--Herglotz function in $\cH$.
Then the following assertions hold: \\
$(i)$ For each $f \in \cH$, $(f,M(\cdot) f)_{\cH}$ is a $($scalar$)$
Nevanlinna--Herglotz function. \\
$(ii)$ Suppose that $\{e_j\}_{j\in\bbN}$ is a complete orthonormal system in $\cH$
and that for some subset of $\bbR$ having positive Lebesgue measure, and for all
$j\in\bbN$, $(e_j,M(\cdot) e_j)_{\cH}$ has zero normal limits. Then $M\equiv 0$. \\
$(iii)$ There exists a bounded, nonnegative $\cB(\cH)$-valued measure
$\Omega$ on $\bbR$ such that the Nevanlinna representation
\begin{align}
& M(z) = C + D z + \int_{\bbR} d\Omega (\lambda ) \bigg[\f{1}{\lambda-z}
- \f{\lambda}{\lambda ^2 + 1}\bigg],
\quad z\in\bbC_+,    \lb{A.42} \\
& \wti \Omega((-\infty, \lambda]) = \slim_{\varepsilon \downarrow 0}
\int_{-\infty}^{\lambda + \varepsilon} d \Omega (t)  \, (t^2 + 1)^{-1},  \quad
\lambda \in \bbR,   \lb{A.42A} \\
& \wti \Omega(\bbR) = \Im(M(i)) - D
= \int_{\bbR} d\Omega(\lambda) \, (\lambda^2 + 1)^{-1} \in \cB(\cH),   \lb{A.42a} \\
& C=\Re(M(i)),\quad D=\slim_{\eta\uparrow \infty} \,
\frac{1}{i\eta}M(i\eta) \geq 0,      \lb{A.42b}
\end{align}
holds in the strong sense in $\cH$. Here
$\wti\Omega (B) = \int_{B}  \big(1+\lambda^2\big)^{-1}d\Omega(\lambda)$, $B \in \mathfrak{B}(\bbR)$.   \\
$(iv)$ Let $\lambda _1,\lambda_2\in\bbR$, $\lambda_1<\lambda_2$. Then the
Stieltjes inversion formula for $\Omega $ reads
\begin{equation}\lb{A.43}
\Omega ((\lambda_1,\lambda_2]) f =\pi^{-1} \slim_{\delta\downarrow 0}
\slim_{\varepsilon\downarrow 0}
\int^{\lambda_2 + \delta}_{\lambda_1 + \delta}d\lambda \,
\Im (M(\lambda+i\varepsilon)) f, \quad f \in \cH.
\end{equation}
$(v)$ Any isolated poles of $M$ are simple and located on the
real axis, the residues at poles being nonpositive bounded operators in $\cB(\cH)$.  \\
$(vi)$ For all $\lambda \in \bbR$,
\begin{align}
& \slim_{\varepsilon \downarrow 0} \, \varepsilon
\Re(M(\lambda +i\varepsilon ))=0,    \lb{A.45} \\
& \, \Omega (\{\lambda \}) = \slim_{\varepsilon \downarrow 0} \,
\varepsilon \Im (M(\lambda + i \varepsilon ))
=- i \slim_{\varepsilon \downarrow 0} \,
\varepsilon M(\lambda +i\varepsilon).     \lb{A.46}
\end{align}
$(vii)$ If in addition $M(z) \in \cB_{\infty} (\cH)$, $z \in \bbC_+$, then the measure $\Omega$ in \eqref{A.42} is countably additive with respect to the
$\cB(\cH)$-norm, and the Nevanlinna representation \eqref{A.42} and the
Stieltjes inversion formula \eqref{A.43} as well as \eqref{A.45}, \eqref{A.46} hold
with the limits taken with respect to the $\|\cdot\|_{\cB(\cH)}$-norm. \\
$(viii)$ Let $f \in \cH$ and assume in addition that $\Omega(\cdot) f$ is of finite total
variation. Then for a.e.\ $\lambda \in \bbR$, the normal limits $M(\lambda + i0) f$
exist in the strong sense and
\begin{equation}
\slim_{\varepsilon \downarrow 0} M(\lambda +i\varepsilon) f
= M(\lambda +i 0) f = H(\Omega(\cdot) f) (\lambda) + i \pi \Omega'(\lambda) f,
\end{equation}
where $H(\Omega(\cdot) f)$ denotes the $\cH$-valued Hilbert transform
\begin{equation}
H(\Omega(\cdot) f) (\lambda) = \text{p.v.}\int_{- \infty}^{\infty} d \Omega (t) f \,
\f{1}{t - \lambda}
= \slim_{\delta \downarrow 0} \int_{|t-\lambda|\geq \delta} d \Omega (t) f \,
\f{1}{t - \lambda}.
\end{equation}
\end{theorem}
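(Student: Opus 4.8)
The plan is to reduce everything to the classical scalar Nevanlinna--Herglotz theory by means of the diagonal functions $m_f(z) = (f, M(z)f)_{\cH}$, $f \in \cH$, together with polarization, the uniform control furnished by the single bounded operator $\Im(M(i))$ being the decisive quantitative ingredient. Part $(i)$ is immediate: since $M$ is analytic on $\bbC_+$ with values in $\cB(\cH)$, each $m_f$ is analytic, and $\Im(m_f(z)) = (f, \Im(M(z)) f)_{\cH} \geq 0$ by Definition \ref{dA.4}, so $m_f$ is a scalar Nevanlinna--Herglotz function. Applying the scalar representation theorem to each $m_f$ produces a nonnegative scalar measure $\mu_f$ together with $c_f = \Re(m_f(i)) \in \bbR$ and $d_f = \lim_{\eta \uparrow \infty}(i\eta)^{-1} m_f(i\eta) \geq 0$, and the normalized total mass is controlled by $\int_{\bbR} d\mu_f(\lambda)(\lambda^2+1)^{-1} = \Im(m_f(i)) - d_f \leq (f, \Im(M(i)) f)_{\cH} \leq \|\Im(M(i))\|_{\cB(\cH)} \|f\|_{\cH}^2$.

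For $(iii)$ I would polarize: the sesquilinear form $(f,g) \mapsto \int_{\bbR}(\lambda^2+1)^{-1} d\mu_{f,g}$, with $\mu_{f,g}$ obtained from the $\mu_f$ via the polarization identity, is bounded by the displayed estimate and nonnegative on the diagonal, so by the Riesz representation theorem for bounded sesquilinear forms there is a unique $0 \leq \wti\Omega(\bbR) \in \cB(\cH)$ representing it; running the same construction with $\mu_{f,g}$ restricted to an arbitrary Borel set $B$ yields the values $\wti\Omega(B)$, and strong countable additivity together with the bounds required by Definition \ref{dA.6} follows from dominated convergence applied to the scalar $\mu_f$ and a polarization/uniform-boundedness argument. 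Setting $d\Omega = (\lambda^2+1)\, d\wti\Omega$, $C = \Re(M(i))$, and $D = \slim_{\eta \uparrow \infty}(i\eta)^{-1}M(i\eta)$ (the strong limit existing and being nonnegative from the monotone scalar limits $d_f$ and polarization), the scalar representations assemble, again by polarization and the fact that a bounded operator is determined by its sesquilinear form, into the operator representation \eqref{A.42}, with \eqref{A.42A}--\eqref{A.42b} read off the construction. Part $(iv)$ then follows by applying the scalar Stieltjes inversion formula to each $m_f$, polarizing, and upgrading the resulting weak convergence to the strong convergence in \eqref{A.43} via the uniform bound on $\wti\Omega(\bbR)$ and a standard $\varepsilon/3$ argument.

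Parts $(v)$ and $(vi)$ are likewise inherited from the scalar theory. At an isolated pole the Laurent coefficients of $M$ are recovered from those of the $m_f$; the scalar fact that an isolated real pole of a Herglotz function is simple with nonpositive residue, applied to every $m_f$ and polarized, gives $(v)$. For $(vi)$ one applies the scalar identities $\slim_{\varepsilon \downarrow 0}\varepsilon \Re(m_f(\lambda+i\varepsilon)) = 0$ and $\mu_f(\{\lambda\}) = \lim_{\varepsilon \downarrow 0}\varepsilon \Im(m_f(\lambda+i\varepsilon))$ and polarizes, the uniform bound again converting the weak limits into the strong limits \eqref{A.45}, \eqref{A.46}.

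Finally, $(ii)$ follows from the scalar boundary uniqueness theorem for Nevanlinna--Herglotz functions: zero normal limits of $(e_j, M(\cdot)e_j)_{\cH}$ on a set of positive Lebesgue measure force each diagonal function to vanish identically, so that $(e_j, M(z)e_j)_{\cH} = 0$ for all $j$ and all $z$; nonnegativity of $\Im(M(z))$ then yields $[\Im(M(z))]^{1/2}e_j = 0$ for all $j$, i.e.\ $\Im(M) \equiv 0$, whence $M(z)$ reduces to a constant self-adjoint operator which, having vanishing diagonal in the complete orthonormal system $\{e_j\}$, must equal $0$. For $(vii)$ one uses that $M(z) \in \cB_{\infty}(\cH)$ makes $\wti\Omega(\bbR) = \Im(M(i)) - D$ compact, which upgrades the strong countable additivity and the inversion limits to the $\|\cdot\|_{\cB(\cH)}$-norm, since on the range of a compact operator strong convergence of uniformly bounded families becomes norm convergence. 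Part $(viii)$ is the operator-valued Fatou/Plemelj statement: when $\Omega(\cdot)f$ has finite total variation the $\cH$-valued Hilbert transform exists as a principal value, and the scalar Plemelj decomposition of $(g, M(\cdot)f)_{\cH}$, combined with $\Omega(\{\lambda\}) = 0$ a.e.\ and $(vi)$, identifies the strong normal limit as $H(\Omega(\cdot)f)(\lambda) + i\pi \Omega'(\lambda)f$. The main obstacle throughout is not any individual identity but the repeated passage between the weak (scalar, through $m_f$), strong, and norm topologies; the linchpin legitimizing each such passage is the uniform operator bound $\wti\Omega(\bbR) = \Im(M(i)) - D \in \cB(\cH)$, so the crux is to secure this boundedness at the outset and then exploit it, via Riesz representation and uniform-boundedness arguments, to convert the scalar weak convergences into the asserted strong and norm statements.
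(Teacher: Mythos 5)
The paper itself offers no proof of Theorem \ref{tA.7}: it is imported wholesale from Allen--Narcowich, Brodskii, and Shmul'yan (with detailed arguments in \cite{GWZ13}, \cite{GWZ13b}), so there is no in-paper argument to compare with; your scalar-reduction-plus-polarization scheme is indeed the canonical route those sources take, and your treatment of $(i)$, $(iii)$, $(iv)$ is structurally the standard one. There are, however, two genuine gaps.

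First, your proof of $(ii)$ fails at the last step. You correctly deduce $\Im(M(z))\equiv 0$ (a nonnegative operator with vanishing diagonal in a complete orthonormal system is zero, via its square root), hence $M\equiv C$ with $C=C^*$ constant and $(e_j,Ce_j)_{\cH}=0$ for all $j$. But a self-adjoint operator with vanishing diagonal need not vanish: $C=\bigl(\begin{smallmatrix}0&1\\1&0\end{smallmatrix}\bigr)\oplus 0$ is self-adjoint, has zero diagonal, and the constant function $M(z)=C$ is a legitimate bounded Nevanlinna--Herglotz function whose diagonal entries have zero normal limits everywhere. So the inference ``vanishing diagonal, hence $C=0$'' is false, and diagonal data alone cannot close the argument. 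The repair (consistent with how the cited sources must be read) is to apply the scalar Luzin--Privalov uniqueness theorem not only to the diagonal functions but to the polarized quadratic forms $\bigl(e_j+i^k e_l,\,M(\cdot)(e_j+i^k e_l)\bigr)_{\cH}$, $k=0,\dots,3$ --- each is a Herglotz function, so vanishing normal limits on a set of positive measure force it to vanish identically, and then all matrix elements $(e_j,M(\cdot)e_l)_{\cH}$ vanish, whence $M\equiv 0$ by totality of $\{e_j\}_{j\in\bbN}$.

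Second, your recurring weak-to-strong upgrades are asserted rather than proved, and uniform boundedness plus weak convergence does \emph{not} imply strong convergence in general (consider shift operators). Where the relevant differences are sign-definite --- e.g., $\eta^{-1}\Im(M(i\eta))-D\geq 0$ and the excess over $\Omega(\{\lambda\})$ in \eqref{A.46} --- the honest mechanism is the square-root trick: if $0\leq T_\varepsilon$ with $(f,T_\varepsilon f)_{\cH}\to 0$, then $T_\varepsilon^{1/2}f\to 0$ and hence $T_\varepsilon f\to 0$ by uniform boundedness. But in \eqref{A.45} and in the Stieltjes inversion \eqref{A.43} the differences are not sign-definite, and one must instead estimate $\big\|\int \phi_\varepsilon \,d\Omega\, f\big\|_{\cH}$ directly, using Cauchy--Schwarz for the nonnegative operator-valued measure together with explicit Poisson/conjugate-Poisson (arctan) kernel bounds of the form $|\phi_\varepsilon(t)|\leq c_\varepsilon(t^2+1)^{-1}$ and dominated convergence; your ``$\varepsilon/3$ argument'' does not substitute for this. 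Relatedly, in $(vii)$ the lemma you invoke is garbled --- what is actually needed is that $0\leq S\leq K$ with $K$ compact forces $S$ compact (so $D$ and every $\wti\Omega(B)$ are compact), a Dini-type statement that decreasing nonnegative compacts converging strongly to $0$ converge in norm, and that bounded WOT-convergent families sandwiched by a fixed compact converge in norm --- and in $(viii)$ the scalar a.e.\ statements cannot be glued over $g$ (the exceptional null set depends on $g$), so the strong a.e.\ existence of $M(\lambda+i0)f$ genuinely requires the bounded-variation hypothesis via vector-valued differentiation (using that $\cH$, being reflexive, has the Radon--Nikodym property) and an $\cH$-valued Fatou--Privalov argument, which you name but do not supply.
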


As usual, the normal limits in Theorem \ref{tA.7} can be replaced by nontangential ones.The nature of the boundary values of $M(\cdot + i 0)$
when for some $p>0$, $M(z) \in \cB_p(\cH)$, $z \in \bbC_+$, was clarified in detail in \cite{BE67}, \cite{Na89}, \cite{Na90}, \cite{Na94}. We also mention that
Shmul'yan \cite{Sh71} discusses the Nevanlinna representation \eqref{A.42}; moreover, certain special classes of Nevanlinna functions, isolated
by Kac and Krein \cite{KK74} in the scalar context, are studied by Brodskii
\cite[Sect.\ I.4]{Br71} and Shmul'yan \cite{Sh71}.

Our final result of this appendix offers an elementary proof of bounded invertibility of 
$\Im(M(z))$ for all $z \in \bbC_+$ if and only if this property holds for some 
$z_0 \in \bbC_+$:
 
\begin{lemma} \lb{lA.8}
Let $M$ be a bounded operator-valued Nevanlinna--Herglotz function in $\cH$. Then 
$[\Im(M(z_0))]^{-1} \in \cB(\cH)$ for some $z_0\in\bbC_+$ $($resp., 
$z_0 \in \bbC_-$$)$ if and only if $[\Im(M(z))]^{-1} \in \cB(\cH)$ for all $z\in\bbC_+$ 
$($resp., $z \in \bbC_-$$)$.
\end{lemma}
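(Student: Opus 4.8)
The plan is to reduce everything to the Nevanlinna--Herglotz representation \eqref{A.42} of $M$ and to track bounded invertibility of $\Im(M(z))$ quantitatively as $z$ varies. I would first record the elementary fact that for a nonnegative self-adjoint $T\in\cB(\cH)$ one has $T^{-1}\in\cB(\cH)$ if and only if $T\geq\varepsilon I_{\cH}$ for some $\varepsilon>0$ (the spectrum of $T$ lies in $[0,\infty)$, so boundedness of $T^{-1}$ is equivalent to $\inf\sigma(T)>0$). Since $\Im(M(z))\geq 0$ for $z\in\bbC_+$, bounded invertibility of $\Im(M(z))$ is thus the same as a strict lower bound $\Im(M(z))\geq\varepsilon I_{\cH}$. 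Taking imaginary parts in \eqref{A.42}, using that $C=\Re(M(i))$ is self-adjoint, $D\geq 0$, that $d\Omega(\cdot)$ is a nonnegative $\cB(\cH)$-valued measure, and that $\Im\big(\tfrac{1}{\lambda-z}-\tfrac{\lambda}{\lambda^2+1}\big)=\Im(z)\,|\lambda-z|^{-2}$, I would arrive at
\begin{equation*}
\Im(M(z)) = \Im(z)\,\big[D + S(z)\big], \quad
S(z) := \int_{\bbR} \f{d\Omega(\lambda)}{|\lambda-z|^2}\geq 0, \quad z\in\bbC_+ .
\end{equation*}

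The core step is then a comparison of the scalar weights. For fixed $z,z_0\in\bbC_+$ the function $\lambda\mapsto |\lambda-z_0|^2/|\lambda-z|^2$ is continuous and strictly positive on $\bbR$ (the denominator is bounded below by $(\Im(z))^2>0$) and tends to $1$ as $|\lambda|\to\infty$; hence it is bounded above and below by positive constants $0<c(z,z_0)\leq C(z,z_0)<\infty$. Comparing in the same way with $(\lambda^2+1)^{-1}$ and invoking \eqref{A.42a} shows that $S(z)\in\cB(\cH)$. Testing the pointwise inequality $|\lambda-z|^{-2}\geq c\,|\lambda-z_0|^{-2}$ against the nonnegative scalar measures $(f,\Omega(\cdot)f)_{\cH}$, $f\in\cH$, promotes it to the operator inequality $S(z)\geq c\,S(z_0)$, whence, since $D\geq 0$,
\begin{equation*}
\f{1}{\Im(z)}\Im(M(z)) = D+S(z) \geq \min(1,c)\,\big[D+S(z_0)\big]
= \min(1,c)\,\f{1}{\Im(z_0)}\Im(M(z_0)).
\end{equation*}
Thus if $[\Im(M(z_0))]^{-1}\in\cB(\cH)$, i.e.\ $\Im(z_0)^{-1}\Im(M(z_0))\geq\varepsilon_0 I_{\cH}$, then $\Im(z)^{-1}\Im(M(z))\geq\min(1,c)\,\varepsilon_0 I_{\cH}>0$ for every $z\in\bbC_+$, giving $[\Im(M(z))]^{-1}\in\cB(\cH)$; the converse implication is immediate. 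The $\bbC_-$ statement then follows from the reflection \eqref{A.36}, which yields $\Im(M(z))=-\Im(M(\ol z))$ for $z\in\bbC_-$, reducing it to the already established $\bbC_+$ case.

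I expect the only genuinely delicate point to be the passage from injectivity to \emph{bounded} invertibility. Lemma \ref{lA.5} already shows that $\ker(\Im(M(z)))$ is independent of $z\in\bbC\backslash\bbR$, but this does not by itself transfer bounded invertibility of $\Im(M(z_0))$ to other $z$; what is needed is the uniform-in-$\lambda$ comparison of weights above, which produces an honest operator lower bound rather than mere triviality of the kernel. A minor technical care, routine given that $d\Omega$ is a nonnegative operator-valued measure with $\int_{\bbR}(\lambda^2+1)^{-1}\,d\Omega(\lambda)\in\cB(\cH)$, is to justify that $S(z)$ defines a bounded operator and that the scalar-measure testing legitimately yields the stated operator inequalities.
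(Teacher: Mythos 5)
Your proof is correct and follows essentially the same route as the paper's: both take imaginary parts in the representation \eqref{A.42}, observe that the Poisson-type weights $\Im(z)/|\lambda-z|^{2}$ at two points of $\bbC_+$ are comparable up to a positive constant (a continuous, strictly positive ratio tending to $1$ as $\lambda\to\pm\infty$), promote this to an operator inequality via the nonnegative measure $\Omega$, and thereby transfer the uniform lower bound $\Im(M(z_0))\geq\varepsilon I_{\cH}$ to every $z\in\bbC_+$, handling $\bbC_-$ by the reflection \eqref{A.36}. The only differences are bookkeeping (your $\min(1,c)$ factor versus the paper's absorption of $y_0/y$ and $c$ into the chain of inequalities).
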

\begin{proof}
By relation \eqref{A.36}, it suffices to consider $z_0, z \in \bbC_+$, and because of 
Theorem\ \ref{tA.7}\,$(iii)$, we can assume that $M(z)$, $z \in \bbC_+$, has the 
representation \eqref{A.42}. 

Let $x_0,x\in\bbR$ and $y_0,y>0$, then there exists a constant $c\geq1$ such that
\begin{align}
\sup_{\la\in\bbR} \bigg(\f{(\la-x)^2+y^2}{(\la-x_0)^2+y_0^2}\bigg) \leq c,
\end{align}
since the function on the left-hand side is continuous and tends to $1$ as $\la\to\pm\infty$.
If $[\Im(M(x_0+iy_0)]^{-1} \in \cB(\cH)$, there exists $\delta>0$ such that 
$\Im(M(x_0+iy_0))\geq \delta I_\cH$, and hence, using $c\geq1$, $y>0$, and $\Omega\geq0$, 
one obtains
\begin{align}
\delta I_\cH &\leq \Im(M(x_0+iy_0)) = Dy_0 + \int_{\bbR} \f{y_0}{(\la-x_0)^2+y_0^2}\,d\Omega(\la) \no\\
&\leq \f{y_0}{y}\bigg[Dy + c\int_{\bbR} \f{y}{(\la-x)^2+y^2}\,d\Omega(\la)\bigg] \\
&\leq \f{y_0}{y}\bigg[\Im(M(x+iy))+(c-1)\int_{\bbR} \f{y}{(\la-x)^2+y^2}\,d\Omega(\la)\bigg] \leq \f{y_0}{y}\Im(M(x+iy)). \no
\end{align}
Thus, $\Im(M(x+iy))\geq (y/y_0) \delta I_\cH$, and hence $[\Im(M(x+iy))]^{-1} \in \cB(\cH)$.
\end{proof}

For a variety of additional spectral results in connection with operator-valued Nevanlinna--Herglotz functions we refer to \cite{BMN02} and \cite[Proposition~1.2]{DM97}. For a systematic treatment of  operator-valued Nevanlinna--Herglotz families we refer to \cite{DHM15}. 

\smallskip

\noindent {\bf Acknowledgments.}
We are indebted to Jussi Behrndt and Mark Malamud for numerous discussions on this topic. 
S.\,N.~is grateful to the Department of Mathematics of the University of Missouri where part of this work was completed while on a Miller Scholar Fellowship in February--March of 2014.


\end{document}